\newcommand{\vc}[1]{\ensuremath{\vcenter{\hbox{#1}}}}
\definecolor{blue}{rgb}{0,0,1}
\definecolor{red}{rgb}{1,0,0}
\definecolor{purp}{rgb}{0.55,0.15,0.51}
\newtheorem{theorem}{Theorem}
\newtheorem{conjecture}[theorem]{Conjecture}
\newtheorem{question}[theorem]{Question}
\newtheorem{claim}[theorem]{Claim}
\newtheorem{problem}[theorem]{Problem}
\theoremstyle{definition}
\newcommand{\greedy}{\rm G}
\newcommand{\alon}{\rm AT}
\newcommand{\mad}{\rm mad}
\newcommand{\chil}{\chi_i^{\ell}}
\definecolor{LGrey}{rgb}{.7,.7,.7}
\DeclareMathOperator{\md}{md}
\tikzset{
externalv/.style={inner sep=2pt, outer sep=0pt, circle,fill=gray},
listsize1/.style={inner sep=1.7pt, outer sep=0pt, circle,fill=black,draw},
listsize2/.style={inner sep=1.7pt, outer sep=0pt, circle,draw},
listsize3/.style={inner sep=1.2pt, outer sep=0pt, regular polygon, regular polygon sides=3, draw},
listsize4/.style={inner sep=1.7pt, outer sep=0pt, regular polygon, regular polygon sides=4, draw},
listsize5/.style={inner sep=1.7pt, outer sep=0pt, regular polygon, regular polygon sides=5, draw},
degree2/.style={inner sep=3.5pt, outer sep=0pt, circle , draw},
externaledges/.style={dashed},
realedges/.style={}
}
\begin{document}

\title{Injective choosability of subcubic planar graphs with girth 6}
\author{
Boris Brimkov \thanks{Department of Computational and Applied Mathematics, Rice University, USA (boris.brimkov@rice.edu).}
\and
Jennifer Edmond \thanks{Mathematics Department, Syracuse University, USA (jledmond@syr.edu).}
\and
Robert Lazar \thanks{Department of Mathematics, Iowa State University, USA (rllazar@iastate.edu).}
\and
Bernard Lidick\'{y} \thanks{Department of Mathematics, Iowa State University, USA (lidicky@iastate.edu).}
\and
Kacy Messerschmidt \thanks{Department of Mathematics, Iowa State University, USA (kacymess@iastate.edu).}
\and
Shanise Walker \thanks{Department of Mathematics, Iowa State University, USA (shanise1@iastate.edu).}
}

\maketitle

\begin{abstract}
An injective coloring of a graph $G$ is an assignment of colors to the vertices of $G$ so that any two vertices with a common neighbor have distinct colors.
A graph $G$ is injectively $k$-choosable if for any list assignment $L$, where $|L(v)| \geq k$ for all $v \in V(G)$, $G$ has an injective $L$-coloring. Injective colorings have applications in the theory of error-correcting codes and are closely related to other notions of colorability.  In this paper, we show that subcubic planar graphs with girth at least 6 are injectively 5-choosable.  
This strengthens the result of  Lu\v{z}ar, \v{S}krekovski, and Tancer that subcubic planar graphs with girth at least 7 are injectively 5-colorable. 
Our result also improves several other results in particular cases.
\end{abstract}

\section{Introduction}

A \emph{proper coloring} of a graph $G$ is an assignment of colors to the vertices of $G$ so that any neighboring vertices receive distinct colors. The \emph{chromatic number} of $G$, $\chi(G)$, is the minimum number of colors needed for a proper coloring of $G$.  An \emph{injective coloring} of a graph $G$ is an assignment of colors to the vertices of $G$ so that any two vertices with a common neighbor receive distinct colors. The \emph{injective chromatic number}, $\chi_i(G)$, is the minimum number of colors needed for an injective coloring of $G$. An injective coloring of $G$ is not necessarily a proper coloring of $G$. Define the \emph{neighboring graph} $G^{(2)}$ by $V(G^{(2)})=V(G)$ and $E(G^{(2)})=\{uv: u \text{ and } v \text{ have a common neighbor in } G\}$. Note that $\chi_i(G)=\chi(G^{(2)})$. 

Injective colorings were first introduced by Hahn, Kratochv\'{i}l, \v{S}ir\'{a}\v{n}, and Sotteau~\cite{HKSS}, where the authors showed injective colorings can be used in coding theory, by relating the injective chromatic number of the hypercube to the theory of error-correcting codes. The authors showed that for a graph $G$ with maximum degree $\Delta$, $\chi_i(G)\leq \Delta(\Delta-1)+1$.  They also showed that computing the injective chromatic number is NP-complete and gave bounds and structural results for the injective chromatic numbers of graphs with special properties.  It is easy to see that $\Delta(G)\leq \chi_i(G)\leq |V(G)|$. 

For each $v \in V(G)$, let $L(v)$ be a set of colors assigned to $v$. Then $L = \{L(v)|v \in V(G)\}$ is a \emph{list assignment} of $G$. Given a list assignment $L$ of $G$, an injective coloring $\varphi$ of  $G$ is called an \emph{injective $L$-coloring} of $G$ if $\varphi(v) \in L(v)$ for every $v\in V(G)$. A graph is injectively \emph{$k$-choosable} if for any list assignment $L$, where $|L(v)| \geq k$ for all $v \in V(G)$, $G$ has an injective $L$-coloring. The \emph{injective choosability number}  of $G$, denoted $\chi_i^{\ell}(G)$, is the minimum  $k$ needed such that $G$ is injectively $k$-choosable. It is clear that $\chi_i(G)\leq \chi_i^{\ell}(G)$. 

  Graphs with low  injective chromatic numbers have been studied extensively. A number of authors have studied the injective chromatic number of graphs $G$ in relation to their maximum degree, $\Delta(G)$, or their maximum average degree,
 $
 \mad(G) = \max_{H \subseteq G}\{ {2|E(H)|}/{|V(H)|} \},
 $
for instance \cite{BI2, BIN, BCRW, LST}. As $\mad(G)<\frac{2g(G)}{g(G)-2}$ for all planar graphs, we can compute a bound for the girth of $G$, $g(G)$, given $\mad(G)$. 
Table~\ref{fig1} consists of results for the injective chromatic number and the injective choosability number of  graphs which depend on planarity, the maximum degree, the maximum average degree, and the girth.
  
\begin{table}[h!] 

\centering
\begin{tabular}{ |p{3cm}|p{1.5cm}|p{1.2cm}|p{1.2cm}|p{1.2cm}|p{6cm}|  }
\hline
Bounds &Planar&	 $\Delta(G)$ & $\mad(G)$ & $g(G)$ & Authors  \\
\hline
$\chi_i(G) \leq \Delta + 1$ & Yes& $\geq 18$ &  & $\geq 6$  &Borodin and Ivanova ~\cite{BI2} \\
\hline 
\rowcolor{LGrey}
$\chi_i(G) \leq \Delta + 3$ & Yes& &  & $\geq 6$ & Dong and Lin ~\cite{DL} \\
\hline
$\chi_i(G) \leq \Delta +3 $ & No & & $<\frac{14}{5}$ & $\geq 7^\ast$ &Doyon, Hahn, and Raspaud ~\cite{DHR}\\
\hline
\rowcolor{LGrey}
$\chi_i(G) \leq \Delta + 4$ & No & & $< 3$ &$\geq 6^\ast$ & Doyon, Hahn, and Raspaud ~\cite{DHR} \\
\hline
$\chi_i(G) \leq \Delta + 8$ & No & & $< \frac{10}{3}$ &$\geq 5^\ast$ & Doyon, Hahn, and Raspaud ~\cite{DHR} \\
\hline
\rowcolor{LGrey}
$\chi_i(G)  \leq 5$ & Yes &  $\leq 3$ &  &  $\geq 7$ & Lu\v{z}ar, \v{S}krekovski, and Tancer~\cite{LST} \\
\hline
$\chil(G) \leq \Delta + 1$ & No& & $<\frac{5}{2}$ & $\geq 10^\ast$ &Cranston, Kim and Yu  ~\cite{CKY2} \\
\hline
$\chil(G) \leq \Delta + 1$ &Yes& $\geq 4$ &  & $\geq 9$ &Cranston, Kim and Yu  ~\cite{CKY2} \\
\hline
$\chil(G) = \Delta $ & Yes& $\geq 4$ & & $\geq 13$ &Cranston, Kim and Yu  ~\cite{CKY2} \\
\hline
$\chil(G) =\Delta$ & No & & $<\frac{42}{19}$ &$\geq 21^\ast$ &Cranston, Kim and Yu~\cite{CKY2}\\
\hline
$\chil(G) \leq 5$ &No & $\geq 3$ & $<\frac{36}{13}$ & $\geq 8^\ast$ &Cranston, Kim and Yu~\cite{CKY1} \\
\hline
$\chil(G) \leq \Delta +2$ & No& $\geq 4$ & $<\frac{14}{5}$ &$\geq 7^\ast$ & Cranston, Kim and Yu ~\cite{CKY1}\\
\hline
$\chil(G) \leq \Delta + 1$ & Yes& $\geq 24$ & & $\geq 6$ &Borodin and Ivanova ~\cite{BI1}\\
\hline
$\chil(G) \leq \Delta + 2$ &Yes & $\geq 12$ &  &$\geq 6$ & Li and Xu ~\cite{LiXu}  \\
\hline 
$\chil(G) \leq \Delta + 2$ & Yes & $\geq 8$ &  & $\geq 6$  &Bu and Lu ~\cite{BuLu} \\
\hline
\rowcolor{LGrey}
$\chil(G) \leq \Delta + 3$ & Yes& &  & $\geq 6$  & Chen and Wu ~\cite{ChenWu} \\
\hline
$\chil(G) \leq \Delta + 4$ & Yes& $\geq 30$ &  & $\geq 5$ &Li and Xu ~\cite{LiXu} \\
\hline
$\chil(G) \leq \Delta + 5$ & Yes& $\geq 18$ &  & $\geq 5$ & Li and Xu ~\cite{LiXu} \\
\hline
$\chil(G) \leq \Delta + 6$ & Yes& $\geq 14$ &  & $\geq 5$ &Li and Xu ~\cite{LiXu} \\
\hline
\hline
$\chil(G) \leq 5$ &Yes & $\leq 3$ & $ $ & $\geq 6$ & This paper \\
\hline

\end{tabular}
\caption{Known results on the injective chromatic number and injective list chromatic number.
 A `Yes' in the `Planar' column indicates that the result holds only for planar graphs, and a `No' indicates that the result holds for both planar and non-planar graphs. 
 A $^\ast$ in the `$g(G)$' column indicates that the girth was obtained using the bound  $\mad(G)<\frac{2g(G)}{g(G)-2}$.
 Results in \cite{CKY2, CKY1} are stated for injective coloring. However, the same proofs work also for injective list coloring.
}
\label{fig1}
\end{table}

For a planar graph $G$ with girth at least $6$ and  any maximum degree $\Delta$, the best known result about the injective chromatic number is $\chi_i(G)\leq \Delta +3$~\cite{DL}. 
In this paper, we improve this result for the case $\Delta=3$.
Moreover,  we improve the result of  Lu\v{z}ar, \v{S}krekovski, and Tancer~\cite{LST} by decreasing the girth condition
and by changing to injective list coloring.
We also improve the other two highlighted bounds in Table \ref{fig1} in special cases.

\begin{theorem}\label{thm:ICN:mainresult}
Every planar graph $G$ with $\Delta (G) \leq 3$ and $g(G) \geq 6$ is injectively 5-choosable.
\end{theorem}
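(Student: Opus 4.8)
The plan is to argue by contradiction via a minimal counterexample combined with discharging. Suppose the statement fails and let $G$, together with a bad list assignment $L$ with $|L(v)| \geq 5$, be a counterexample minimizing $|V(G)| + |E(G)|$. I would first record the easy consequences of minimality. The graph $G$ is connected (otherwise color the components separately), and I claim $\delta(G) \geq 2$: a vertex $v$ of degree at most $1$ has at most two vertices sharing a neighbor with it, so $\deg_{G^{(2)}}(v) \leq 2$; deleting $v$, coloring $G - v$ by minimality, and returning $v$ (which sees at most $2 < 5$ forbidden colors) gives an injective $L$-coloring of $G$, a contradiction. This bookkeeping in $G^{(2)}$ is what I would use throughout: the injective constraints on $v$ are exactly its neighbors in $G^{(2)}$, and since $g(G) \geq 6$ forbids $3$-, $4$-, and $5$-cycles, the second neighborhoods through distinct neighbors of $v$ are disjoint and avoid $N_G(v)$, so $\deg_{G^{(2)}}(v) = \sum_{u \sim v}(\deg_G(u) - 1)$. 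In particular a degree-$2$ vertex has $\deg_{G^{(2)}} \leq 4$, hence always has a free color in isolation.

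A crucial global observation is that $G$ cannot be cubic: a $3$-regular planar graph with $g \geq 6$ violates Euler's formula, since $E = \tfrac{3}{2}V$ and $6F \leq 2E = 3V$ plugged into $V - E + F = 2$ force $2 \leq 0$. Thus low-degree vertices must be present, and the entire proof hinges on controlling how they are distributed. This is exactly where girth $6$ is delicate: faces are only guaranteed length $6$, so the excess charge available to fix low-degree vertices is slim.

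The heart of the argument is a list of \emph{reducible configurations} — small subgraphs that cannot appear in $G$. For each I would exhibit a smaller graph $G'$ (by deleting a bounded set $S$ of low-degree vertices and short threads), obtain an injective $L$-coloring of $G'$ by minimality, and extend it across $S$. The subtle point, and the one I would be most careful about, is that deleting $S$ destroys precisely those injective constraints between two retained vertices whose only common neighbor lies in $S$, so a naive coloring of $G'$ need not respect them. I would handle this by choosing configurations so that each lost constraint is either automatically satisfied (the affected pair still shares a neighbor outside $S$) or can be reinstated in $G'$ by identifying two boundary vertices or adding an edge between them, all while checking that $G'$ remains subcubic, planar, and of girth $\geq 6$ so that minimality applies. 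The extension is then a bounded precoloring-extension problem on $G^{(2)}[S \cup \partial S]$: because each deleted vertex has small injective degree, one orders $S$ so that every vertex has a free color when reached, or invokes a short Hall-type argument on the handful of remaining vertices. Given the number and similarity of the configurations, I expect this verification to be organized uniformly, plausibly with computer assistance, rather than case-by-case by hand.

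Finally I would run discharging to show these configurations are unavoidable. Assign each vertex the charge $\deg(v) - 4$ and each face the charge $\ell(f) - 4$; by Euler's formula the total is $-8$, and since $g(G) \geq 6$ every face has charge $\geq 2$ while every vertex has charge $\leq -1$. The positive charge therefore sits on faces and must be funneled to degree-$2$ (and any residual low-degree) vertices by rules in which each face donates to its incident low-degree vertices. The absence of the reducible configurations would bound how many degree-$2$ vertices can crowd a single face and how such vertices may neighbor one another, which is exactly what is needed to force every vertex and every face to end nonnegative, contradicting the total $-8$. I expect the main obstacle to be this final balance: at girth $6$ the faces carry only the thin excess $\ell - 4 \geq 2$, versus the more comfortable $\ell - 4 \geq 3$ exploited at girth $7$ by Lu\v{z}ar, \v{S}krekovski, and Tancer, so the reducible set must be rich enough and the discharging rules finely tuned enough to close this gap — precisely the difficulty in lowering the girth threshold from $7$ to $6$.
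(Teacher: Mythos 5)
Your proposal is a strategy outline rather than a proof: no reducible configuration is actually exhibited, no discharging rule is stated, and unavoidability is never established --- every substantive step is deferred (``I expect,'' ``plausibly with computer assistance''). Beyond incompleteness, there is a concrete missing idea. The paper does not run the induction on the bare statement; it proves a stronger theorem in which up to two precolored paths, each on at most three vertices, may sit on a common face, and the minimal counterexample minimizes non-precolored vertices. That strengthening is what makes the deletion arguments go through for bridges and $2$-edge-cuts, and it is forced on the authors: even after performing exactly the repair you describe (``identifying two boundary vertices or adding an edge between them'' --- this is the paper's expansion of $\mathcal{R}$ to the family $\mathcal{A}$), there remain exceptional configurations $X_1$ and $X_2$ that are genuinely \emph{not} injectively colorable from the residual lists. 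The paper handles them by showing, via the $2$-edge-cut claim, that any occurrence collapses to the single configuration $C_{2,2}^\star$ attached to the precolored outer face, whose unique ``bad'' $2$-vertex is then rescued by a dedicated rule (R5) pulling charge $2$ from the outer face. A blind execution of your plan, with minimality over $|V|+|E|$ and no precoloring apparatus, hits a wall at exactly these exceptions: the extension step you assume always works provably fails there, and you have no outer-face reserve of charge to compensate.

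Your discharging normalization is also off for girth $6$. With $\mu(v)=\deg(v)-4$ and $\mu(f)=\ell(f)-4$, every $3$-vertex starts at $-1$, so \emph{every} vertex is a sink --- contradicting your own description of funneling charge only to degree-$2$ vertices --- and a $6$-face has only $+2$ to distribute among up to six incident $3$-vertices plus any incident $2$-vertex needing $2$, so the local books cannot balance. The natural choice, used by the paper, is $\mu_0(v)=2\deg(v)-6$ and $\mu_0(f)=\ell(f)-6$ (total $-12$, adjusted at precolored vertices and the outer face): then $3$-vertices and $6$-faces start at $0$ and are never touched, and the whole analysis concentrates on $2$-vertices and faces of length at least $7$, driven by structural facts your sketch does not supply --- notably that any two $2$-vertices are at distance at least $4$ (a reducibility claim, not a consequence of girth), the needy/bad vertex taxonomy behind rules (R4) and (R5), and the roughly twenty computer-verified configurations $C_{2,1},\dots,C_{10,3}$ whose absence makes faces of each length $7 \leq \ell(f) \leq 10$ end nonnegative. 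Your correct observations (the formula $\deg_{G^{(2)}}(v)=\sum_{u\sim v}(\deg_G(u)-1)$ under girth $\geq 6$, nonexistence of cubic planar graphs of girth $6$, $\delta \geq 2$ in a minimal counterexample) are genuine but cover only the trivial opening moves of the paper's argument.
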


This theorem is a step towards the conjecture of Chen, Hahn, Raspaud, and Wang~\cite{ChenRapsaud} that all planar subcubic  graphs are injectively 5-colorable.
In order to prove Theorem~\ref{thm:ICN:mainresult} we prove a slightly stronger result in Theorem~\ref{thm:ICN:realresult}. 
Let $G$ be a graph and let $L$ be a list assignment.
A \emph{precolored path} in $G$ is a path $P_k$ on $k$ vertices where $|L(v)|=1$ for all $v \in V(P_k)$ and there is at most one vertex $v \in V(P_k)$
with a neighbor in $G-P_k$. Moreover, $\deg_{P_k}(v)$ is maximal among the other
vertices in $P_k$ and $v$ has at most one neighbor in $G-P_k$.
Vertices with lists of size one are called \emph{precolored}.
The set of all precolored vertices $\mathcal{P}$ in $G$ is \emph{proper} if the lists of precolored vertices
give a proper coloring of $G^{(2)}$ when restricted to $\mathcal{P}$. 
That is, the precolored vertices do not  conflict among themselves.

\begin{theorem}\label{thm:ICN:realresult}
Let $G$ be a plane graph with  $\Delta (G) \leq 3$ and $g(G) \geq 6$.
Let $\mathcal{P} \subseteq V(G)$.
Let $L$ be a list assignment of $G$ such that $|L(v)| \geq 5$ for $v\in V(G)\setminus \mathcal{P}$ and $|L(v)| = 1$ for $v \in \mathcal{P}$.
If the precolored vertices are proper, are all in the same face,  form at most two precolored paths,
each of which is on at most three vertices,   then $G$ is injectively $L$-colorable. 
\end{theorem}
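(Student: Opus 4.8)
The plan is to argue by contradiction from a minimal counterexample and to extract the contradiction by discharging. Suppose the theorem fails, and let $G$, together with its list assignment $L$ and precolored set $\mathcal P$, be a counterexample minimizing $|V(G)|$. The base case in which every vertex is precolored is immediate, since a proper precoloring of the vertices of $G^{(2)}$ is exactly an injective $L$-coloring; so we may assume $V(G)\setminus\mathcal P\neq\emptyset$, and (after absorbing cut vertices and bridges into the reductions) that $G$ is $2$-connected, so every face boundary is a cycle of length at least $g(G)\ge 6$. The goal is to isolate a list of \emph{reducible configurations} — local substructures that cannot occur in a minimal $G$ — and then to show by discharging that every plane graph satisfying the hypotheses contains one of them.

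For the reductions, the engine is a counting fact special to the subcubic, girth-$6$ setting: in $G^{(2)}$ a degree-$2$ vertex has at most $4$ neighbors and a degree-$3$ vertex at most $6$, because girth $\ge 6$ forbids two neighbors of a vertex from having a second common neighbor (which would force a cycle of length at most $4$) and forbids adjacencies among the neighbors themselves, so the relevant neighborhoods are disjoint. The one subtlety — and, I believe, the reason the stronger statement with precolored paths is needed — is that deleting a vertex $v$ destroys the injective constraints $v$ imposes between its neighbors, since two neighbors $a,b$ of $v$ conflict \emph{only} through their common neighbor $v$. I would therefore design each reduction so that the constraints lost upon deletion are re-encoded, either by choosing an extension order that respects them or by installing a short \emph{precolored path} that pins down the relevant colors. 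Concretely, I expect the reducible configurations to include degree-$1$ vertices; overlong threads (maximal paths of degree-$2$ vertices), which can be uncolored down to a short precolored remnant and then re-extended, since in $G^{(2)}$ a thread splits into two low-degree paths; and a finite catalogue of local pictures around adjacent degree-$3$ vertices whose combined neighborhoods always leave a free color. In each case minimality colors the reduced graph — which still has $\Delta\le 3$, girth $\ge 6$, and a valid precolored configuration (at most two precolored paths, each on at most three vertices, all in one face) — and the coloring extends to $G$, a contradiction.

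The contradiction is produced by discharging. Assign to each vertex and face the charge $\mu(x)=\deg(x)-4$; Euler's formula gives
\[
\sum_{v\in V(G)}(\deg v-4)+\sum_{f}(\deg f-4)=-8 .
\]
Since $g(G)\ge 6$, every face has $\deg f\ge 6$ and hence $\mu(f)\ge 2>0$, so all negative charge sits on degree-$2$ and degree-$3$ vertices, with charges $-2$ and $-1$. I would use rules that move charge from the large, positively charged faces to their incident low-degree vertices, with amounts calibrated so that the \emph{absence} of every reducible configuration forces each vertex to finish with nonnegative charge while no face is overdrawn. The reducibility of long threads is precisely what stops a face from having to subsidize an unbounded run of degree-$2$ vertices on its boundary, which is the single way a face charge could turn negative. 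If all final charges are nonnegative, their sum is nonnegative, contradicting the displayed total.

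The main obstacle I anticipate is twofold. First, the analysis around degree-$3$ vertices: such a vertex can see up to $6$ colors, so proving that every unavoidable local configuration is reducible requires a careful and fairly large enumeration of neighborhood types together with extension arguments that exploit the slack in the size-$5$ lists. Second, and more delicate, is tracking the precolored-path invariant through the reductions: each reduction must be verified to keep the precolored vertices proper, confined to one face, and organized into at most two paths of length at most three, so that the minimality hypothesis genuinely applies to the smaller instance. Tuning the discharging so that exactly these constraints — rather than looser ones — suffice is where the girth-$6$ threshold, as opposed to the easier girth-$7$ case of Lu\v{z}ar, \v{S}krekovski, and Tancer, makes the argument tight.
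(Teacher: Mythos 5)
Your proposal has the right skeleton (minimal counterexample, reducible configurations, discharging), but two of its concrete choices would fail, and it misses the one genuinely delicate phenomenon the paper has to handle. First, the charge assignment $\mu(x)=\deg(x)-4$ leaves no room to operate: a $3$-vertex has charge $-1$, so in a hexagonal region every $6$-face (charge exactly $2$, six incident vertices each demanding $1/3$) is spent to the last drop, and a $2$-vertex incident to $6$-faces cannot be paid by its incident faces at all. The paper instead uses $\mu_0(v)=2\deg(v)-6$ and $\mu_0(f)=\ell(f)-6$, which zeroes out $3$-vertices and $6$-faces, so the only negative objects are $2$-vertices; these are then paid by incident faces of length $\geq 7$ \emph{and}, crucially, by \emph{nearby} faces (charge crossing a face boundary to a non-incident $2$-vertex, rule (R4)) and in one case by the outer face (rule (R5)). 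Relatedly, your stated safety mechanism is off-target: ``overlong threads'' of $2$-vertices never arise, because a reduction already forces any two $2$-vertices to be at distance at least $4$ (the paper's Claim~\ref{claim1}); the real danger is a single $2$-vertex embedded among $6$-faces, and ruling it out requires reducible configurations spanning a face and its neighboring faces (the catalogue $C_{2,1},\dots,C_{10,3}$, verified by computer via greedy colorings and Alon--Tarsi orientations of the neighboring graph -- list sizes alone do not suffice by hand in several cases, since a $3$-vertex can see $6$ colors against a list of $5$).

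Second, and more fundamentally, some of these configurations are \emph{not} reducible by coloring: the exceptional graphs $X_1,X_2$ in the paper admit no injective coloring from the residual lists, and your plan has no mechanism for them. The paper disposes of them with a connectivity argument (Claim~\ref{cl2edgeconnected}: a $2$-edge-cut with no precolored vertices on one side contradicts minimality, using added precolored stubs $u_i^x$ to re-encode the lost constraints -- this is where your idea of ``installing a short precolored path'' actually materializes), which eliminates all appearances of $X_1,X_2$ except the single configuration $C_{2,2}^\star$ attached to the precolored paths; that surviving ``bad'' $2$-vertex is then paid $2$ by the outer face, whose initial charge $\ell(F_o)-5-|\mathcal P|$ is specially discounted to absorb it. Finally, your $2$-connectivity reduction is false as stated: the minimal counterexample retains bridges (every precolored path of three vertices has degree-$1$ ends, by Claims~\ref{claimprecolored} and \ref{cl:bridge}); only $G-\mathcal P$ is bridgeless, and making the cut reductions work requires the paper's finer minimality order (fewest components, then fewest non-precolored vertices, then \emph{most} precolored vertices), not plain minimization of $|V(G)|$.
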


\section{Preliminaries}

The following notation shall be used in the sequel.
A \emph{$k$-vertex} is a vertex of degree $k$.  
We denote the degree of a vertex $v$ by $\mathrm{deg}(v)$.
We denote the set of neighbors of $v$ by $N(v)$ and $N(v) \cup \{v\}$ by $N[v]$.
If we want to stress that the degree or neighborhood is in a particular graph $G$, we use subscript $G$, e.g. $\mathrm{deg}_G(v)$.
A \emph{cut edge} or \emph{bridge} is an edge which, when removed, increases the number of components in $G$. Given $S \subset V$, the induced subgraph $G[S]$ is the subgraph of $G$ whose vertex set is $S$ and whose edge set consists of all edges of $G$ which have both ends in $S$.

The length of a face $f$, denoted by $\ell(f)$, is the length of a closed walk around the boundary of $f$.
This is the same as the number of edges incident to $f$ plus the number of cut edges incident to $f$.
A face of length $\ell$ is called an $\ell$-face.
A graph $G$ is \emph{planar} if it is possible to draw $G$ in the plane without edge crossings;
$G$ is \emph{plane} if it is drawn in the plane without edge crossings.
The set of faces of a plane graph $G$ will be denoted by $F(G)$. We say that a 2-vertex $v$ is \emph{nearby} $f$ if $v$ is adjacent to a vertex which is incident to $f$ but $v$ is not incident to $f$ itself. 
The set of all 2-vertices  incident to a face $f$ will be denoted as $I(f)$: 
\[
I(f) =  \{ v \in V(G) : \mathrm{deg} (v) = 2, v \textrm{ is incident to } f \} .
\]
The set of all 2-vertices nearby to a face $f$ will be denoted as $N(f)$: 
\[
N(f) =  \{ v \in V(G) : \mathrm{deg} (v) = 2, v \textrm{ is nearby } f \} .
\]

\subsection{Overview of Method}
In order to prove Theorem \ref{thm:ICN:realresult}, we use the discharging method.
We start with a minimum counterexample and assign initial charges to all vertices and faces of $G$.
By Euler's formula, the sum of charges of all vertices and faces of $G$ is negative. 
Next, we apply rules that move charges between faces and vertices while preserving the sum of the charges.
By the minimality of $G$, certain subgraphs cannot appear in $G$.
We call these subgraphs \emph{reducible configurations}.
By using the fact that $G$ does not contain any {reducible configurations}, we show that the final charge of every face and every vertex of $G$ is nonnegative,
contradicting that the sum of all charges is negative.
For further details and examples of the discharging method, see~\cite{CW}.

\section{Proof of Theorem~\ref{thm:ICN:realresult}}

In this section, we prove Theorem~\ref{thm:ICN:realresult}.
Let $G$ be a minimum counterexample. 
The minimality of $G$ is defined by
first minimizing the number of connected components of $G$, 
then, subject to that, 
minimizing the number of non-precolored vertices,
and finally, subject to the first two conditions, maximizing the number of precolored vertices.
Recall that $G$ is a plane graph with $\Delta(G) \leq 3$ and $g(G) \geq 6$.
Let $L$ be a list assignment for $G$ such that 
the precolored vertices are proper and form at most two precolored paths, 
each on at most three vertices in the same face of $G$,
and such that there is no  injective $L$-coloring of $G$.
Denote the set of all precolored vertices by $\mathcal{P}$.
Note that there are at most six vertices in $\mathcal{P}$.

By the minimality of $G$, we obtain that $G$ is connected.
In addition, each subgraph of $G$ with fewer non-precolored vertices is injectively $L$-colorable.
Moreover, if $G'$ is a connected graph obtained from $G$ by adding precolored vertices which still satisfies the assumptions of Theorem~\ref{thm:ICN:realresult}, then $G'$ is injectively $L$-colorable.
  
 \subsection{Preliminary observations}
 
We first make some preliminary observations about the structure of $G$.

\begin{claim}\label{claimprecolored}
Every precolored path is on three vertices;
every vertex of $\mathcal{P}$  has degree one or three in $G$;
there are at least two vertices that are not precolored.
\end{claim}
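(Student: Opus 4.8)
The three statements all follow from the minimality of $G$, and I would prove them using the two consequences of minimality recorded just before the claim: first, that any connected plane graph obtained from $G$ by \emph{adding} precolored vertices, while still satisfying the hypotheses of Theorem~\ref{thm:ICN:realresult}, is injectively $L$-colorable; and second, that an injective $L$-coloring of any such supergraph restricts to one of $G$, since two vertices sharing a common neighbor in $G$ also share it in the supergraph. I would establish the three parts in the order count, then length, then degrees, because each later part is cleanest once the earlier structure is available.

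For the count, suppose first that every vertex is precolored; then the proper precoloring is itself an injective $L$-coloring of $G$, a contradiction. Suppose next that exactly one vertex $u$ is not precolored. In an injective coloring $u$ must avoid only the colors of the vertices sharing a common neighbor with it, i.e.\ the vertices at distance exactly $2$ from $u$; since $g(G)\ge 6$ the ball of radius $2$ about $u$ is a tree, so these vertices are distinct, disjoint from $N(u)$, and number at most $\sum_{x\in N(u)}(\deg(x)-1)\le 2\,|N(u)|$. They all lie, together with $N(u)$, in $\mathcal{P}$, so their number is also at most $|\mathcal{P}|-|N(u)|\le 6-|N(u)|$. Taking the smaller of the two bounds over $|N(u)|\in\{1,2,3\}$ yields at most $4$ forbidden colors, while $|N(u)|=0$ forces $G=\{u\}$; in every case $L(u)$, of size at least $5$, admits a color, so $G$ is colorable, a contradiction. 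Hence at least two vertices are not precolored.

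For the length, suppose some precolored path $P$ has at most two vertices. I would enlarge $P$ to a $P_3$ by attaching one or two new precolored pendants inside the face $f$ that contains $\mathcal{P}$, giving each pendant a singleton list avoiding the at most two colors on the other neighbors of its attachment vertex, so the enlarged precoloring stays proper. The pendants are attached so that the unique vertex of $P$ having a neighbor outside $P$ becomes the \emph{middle} of the new $P_3$; this keeps it of maximal degree in the path and keeps it the only vertex with an outside neighbor. (If $P$ has no outside neighbor, then $P$ is a whole component, forcing $G=P$ and an immediate coloring.) Since the special vertex of a path on at most two vertices has degree at most $2$, it has a free slot for the new edges. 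Adding pendants in $f$ creates no cycle, so $g(G')\ge 6$ and $G'$ remains plane, subcubic, with all precolored vertices proper and confined to at most two $P_3$'s in one face. Thus $G'$ is injectively $L$-colorable, and its restriction colors $G$, a contradiction.

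For the degrees, the previous part lets me assume every precolored path is a $P_3=v_1v_2v_3$. By the definition of a precolored path, only the vertex of maximal degree in the path, namely the middle vertex $v_2$, may have a neighbor outside $P$; hence $v_1,v_3$ have all neighbors in $P$, so $\deg(v_1)=\deg(v_3)=1$, while $\deg(v_2)\in\{2,3\}$. If $\deg(v_2)=2$ then $v_2$ has no outside neighbor and $\{v_1,v_2,v_3\}$ is a whole component, so connectivity gives $G=P_3$, contradicting that at least two vertices are not precolored. Therefore $\deg(v_2)=3$, and every precolored vertex has degree $1$ or $3$. The main obstacle is the bookkeeping in the length argument: one must attach the pendants at the correct end and check that the technical conditions of a precolored path (a unique outside neighbor of maximal degree) survive the extension; once that is verified the degree and count arguments are short.
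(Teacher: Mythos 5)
Your proof is correct and follows essentially the same route as the paper: extend a short precolored path by adding precolored pendants to contradict the maximality of $|\mathcal{P}|$, force $G=P$ when a precolored vertex has degree two, and color the unique non-precolored vertex greedily since it has at most four neighbors in $G^{(2)}$. The only differences are cosmetic — you prove the three parts in a different order (using the count statement instead of the properness of $\mathcal{P}$ to finish the degree case) and you spell out the pendant-attachment bookkeeping that the paper compresses into one sentence.
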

\begin{proof}
If $P$ is a precolored path on less than $3$ vertices, then we can
add a new precolored vertex to one end of $P$, contradicting
the maximality of  $|\mathcal{P}|$.

Suppose for contradiction there exists $v \in \mathcal{P}$ with $\deg_G(v) = 2$.
Since each precolored path $P$ has three vertices and by the assumptions of Theorem~\ref{thm:ICN:realresult}, only the middle vertex of $P$ can be adjacent to a vertex in $G-P$, $v$ is a middle vertex of a precolored path. 
However, $\deg_P(v) = 2$. Since $G$ is connected, we get $P = G$.
Since $\mathcal{P}$ is proper, there exists an injective $L$-coloring of $G$, which is a contradiction.
 
Suppose $v$ is the only non-precolored vertex.
Then $\deg(v) \leq 2$ and $G$ is a tree.
Therefore, $v$ has at most four neighbors in $G^{(2)}$ and $G$ is injectively $L$-colorable, a contradiction.
\end{proof}

\begin{claim}\label{claim1} 
If $v_1$ and $v_2$ are two distinct 2-vertices, then the distance between them is at least four.
\end{claim}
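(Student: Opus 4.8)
The plan is to argue by contradiction from the minimality of $G$. Suppose two distinct $2$-vertices $v_1,v_2$ lie at distance $d\le 3$, and let $P$ be a shortest $v_1$--$v_2$ path. I would delete $v_1,v_2$, colour the rest by minimality, and then show the colouring extends back to $G$, contradicting that $G$ is a counterexample. Two facts drive everything. First, by Claim~\ref{claimprecolored} every $2$-vertex is non-precolored. Second, since $\Delta(G)\le 3$, a $2$-vertex $w$ with $N(w)=\{p,q\}$ has at most four $G^{(2)}$-neighbours, namely $(N(p)\cup N(q))\setminus\{w\}$; girth at least $6$ forbids the triangle $pwq$, so $p,q$ are not themselves $G^{(2)}$-neighbours of $w$, and the unique pair of vertices having $w$ as common neighbour is $\{p,q\}$ itself. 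Consequently, deleting a $2$-vertex $w$ destroys exactly one injective constraint of $G$: the inequality between the colours of its two neighbours.

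Set $G'=G-\{v_1,v_2\}$. This has two fewer non-precolored vertices, so by minimality it has an injective $L$-colouring $\varphi$. Girth at least $6$ guarantees $P$ is induced and that the external neighbours $a$ (of $v_1$), $c$ (of $v_2$) and the possible third neighbours $z_x,z_y$ of the internal vertices are all distinct, since any coincidence would create a cycle of length at most five. The constraints destroyed by the deletion are the inequalities between the two neighbours of $v_1$ and between the two neighbours of $v_2$; for $d=1$ these involve a deleted vertex and are automatically restored once $v_1,v_2$ are recoloured, whereas for $d\in\{2,3\}$ they are $\varphi(a)\ne\varphi(x)$ and $\varphi(c)\ne\varphi(y)$ (with $x,y$ the internal vertices of $P$). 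To repair the latter I would uncolour the internal vertices of $P$ (none for $d=1$, the common neighbour $x$ for $d=2$, both $x,y$ for $d=3$) and recolour greedily, internal vertices first and then $v_1,v_2$, choosing $x$'s colour to avoid $\varphi(a)$ and $y$'s to avoid $\varphi(c)$.

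The step I expect to be the main obstacle is the degree bookkeeping for the internal vertices, which may have degree $3$ and hence up to five $G^{(2)}$-neighbours, so a naive greedy step with lists of size five need not succeed. The key observation is that the ordering supplies exactly one unit of slack: when $x$ (adjacent to $v_1$) is coloured first, its $G^{(2)}$-neighbour $v_2$ is still uncoloured, and when $y$ (adjacent to $v_2$) is coloured next, its $G^{(2)}$-neighbour $v_1$ is still uncoloured. In each case the far endpoint of $P$ is an uncoloured $G^{(2)}$-neighbour, leaving at most four already-coloured $G^{(2)}$-neighbours and hence a free colour in a list of size five; coincidences ruled out by girth can only decrease this count. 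Finally $v_1$ and $v_2$ are coloured, each then having at most four coloured $G^{(2)}$-neighbours (for $d=2$ they are mutually $G^{(2)}$-adjacent through $x$, handled by colouring $v_1$ before $v_2$), which completes a valid injective $L$-colouring of $G$.

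Before invoking this counting I must know the uncoloured internal vertices are recolourable, that is, not precolored. I would deduce this from the precolored-path structure: a precolored vertex has degree $1$ or $3$, the two ends of each precolored $P_3$ have degree $1$ in $G$ (only the middle vertex may have a single external neighbour), and the middle vertex's two path-neighbours are therefore degree-one precolored vertices. An internal vertex of $P$ has degree at least two, so it is not an end; moreover it has a $2$-vertex among its neighbours ($v_1$ or $v_2$) together with a second $P$-neighbour of degree at least two, so it cannot be the middle of a precolored $P_3$ either, since that would require two degree-one precolored neighbours. Hence the internal vertices are non-precolored and may be recoloured from lists of size five. Assembling these pieces produces an injective $L$-colouring of $G$, the desired contradiction, so any two distinct $2$-vertices are at distance at least four.
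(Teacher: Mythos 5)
Your proof is correct and is essentially the paper's argument: the paper deletes the whole path $Z$ between $v_1$ and $v_2$, applies minimality to $G-Z$, and recolors $Z$ greedily from residual lists (size at least $2$ at the endpoints, at least $1$ at internal vertices, with the same interior-first ordering exploiting that each endpoint's far-end $G^{(2)}$-neighbor is still uncolored), which matches your counting exactly. Your only deviations are cosmetic or supplementary: you delete just $v_1,v_2$ and uncolor the interior instead of deleting all of $Z$, and you spell out explicitly (via the precolored-path structure) that the internal vertices are not precolored, a point the paper leaves implicit.
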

\begin{proof}
Let $v_1$ and $v_2$ be distinct $2$-vertices that are endpoints of a path $Z$ of length $\ell$, where $\ell \leq 3$.
Let $G'$ be obtained from $G$ by removing the vertices of $Z$.
By the minimality of $G$, there exists an injective $L$-coloring $\varphi$ of $G'$.
Observe that  the subgraph of $G^{(2)}$ induced by $V(G')$ is the same as $G'^{(2)}$.

Let $L_Z$ be a list assignment for $Z$ defined in the following way:
\[
L_Z(v) = L(v) \setminus \{\varphi(u): u \in N_{G^{(2)}}(v) \cap V(G') \}.
\]
See Figure~\ref{fig-2vertices} for possible cases based on $\ell$, and refer to Table~\ref{table:configkey} for shape meanings
in Figure~\ref{fig-2vertices} and subsequent figures.
In all three cases, $|L_Z(v_i)| \geq 2$ for $i \in \{1,2\}$ and all the remaining vertices of $Z$
have at least one color available. 
Hence there exists an injective $L_Z$-coloring $\rho$ of $Z$.

\begin{table}
\centering
  \begin{tabular}{ |c|m{1cm} | c|c| }
  \hline 
  Shape & List Size & 2-vertex& With External Edge\\\hline 
   \begin{tikzpicture}[scale=3]  
\node at (0.0,0.0) [listsize1](x0){}; \end{tikzpicture}  & 1& \begin{tikzpicture}[scale=3]  
\node at (0.0,0.0) [listsize1](x0){}; \draw (x0) node[degree2]{};\end{tikzpicture} &  \begin{tikzpicture}[scale=3]  
\node at (0.0,0.0) [listsize1](x0){}; \coordinate (x16) at (.25,0);  \draw[externaledges]  (x0)--(x16); \end{tikzpicture} \\ \hline
   \begin{tikzpicture}[scale=3]  
\node at (0.0,0.0) [listsize2](x0){}; \end{tikzpicture}  & 2&\begin{tikzpicture}[scale=3]  
\node at (0.0,0.0) [listsize2](x0){}; \draw (x0) node[degree2]{};\end{tikzpicture} &  \begin{tikzpicture}[scale=3]  
\node at (0.0,0.0) [listsize2](x0){}; \coordinate (x16) at (.25,0);  \draw[externaledges]  (x0)--(x16); \end{tikzpicture} \\ \hline
     \begin{tikzpicture}[scale=3]  
\node at (0.0,0.0) [listsize3](x0){}; \end{tikzpicture}  & 3& \begin{tikzpicture}[scale=3]  
\node at (0.0,0.0) [listsize3](x0){}; \draw (x0) node[degree2]{};\end{tikzpicture} &  \begin{tikzpicture}[scale=3]  
\node at (0.0,0.0) [listsize3](x0){}; \coordinate (x16) at (.25,0);  \draw[externaledges]  (x0)--(x16); \end{tikzpicture} \\ 
   \hline   \begin{tikzpicture}[scale=3]  
\node at (0.0,0.0) [listsize4](x0){}; \end{tikzpicture}  &4& \begin{tikzpicture}[scale=3]  
\node at (0.0,0.0) [listsize4](x0){}; \draw (x0) node[degree2]{};\end{tikzpicture} &  \begin{tikzpicture}[scale=3]  
\node at (0.0,0.0) [listsize4](x0){}; \coordinate (x16) at (.25,0);  \draw[externaledges]  (x0)--(x16); \end{tikzpicture}  \\ 
   \hline    \begin{tikzpicture}[scale=3]  
\node at (0.0,0.0) [listsize5](x0){}; \end{tikzpicture}  & 5& \begin{tikzpicture}[scale=3]  
\node at (0.0,0.0) [listsize5](x0){}; \draw (x0) node[degree2]{};\end{tikzpicture} &  \begin{tikzpicture}[scale=3]  
\node at (0.0,0.0) [listsize5](x0){}; \coordinate (x16) at (.25,0);  \draw[externaledges]  (x0)--(x16); \end{tikzpicture} \\ 
   \hline
    \end{tabular}
\caption{Key of list sizes.}
\label{table:configkey}
\end{table}

Observe that  $\rho$ and $\varphi$ form an injective $L$-coloring of $G$, which
is a contradiction.
\begin{figure}
\begin{center}
\begin{tikzpicture}[scale=1]  
\node at (0,0)[listsize2,label=above:$v_1$](x0){};
\node at (1,0) [listsize2,label=above:$v_2$](x1){};
\node at (0.5,-0.4) {$Z$};
\draw (x0) node[degree2]{} (x1) node[degree2]{};
\draw[realedges]  (x0)--(x1);
\draw[externaledges]
  (x0)-- ++(180:1) node[externalv](a){}
  (a) -- ++(90:1) node[externalv]{}
  (a) -- ++(270:1) node[externalv]{}
  (x1)-- ++(0:1) node[externalv](a){}
  (a) -- ++(90:1) node[externalv]{}
  (a) -- ++(270:1) node[externalv]{}
;  
\end{tikzpicture}
\hskip 2em
\begin{tikzpicture}[scale=1]  
\node at (0,0)[listsize2,label=above:$v_1$](x0){};
\node at (1,0)[listsize1](y0){};
\node at (2,0) [listsize2,label=above:$v_2$](x1){};
\node at (1,-0.4) {$Z$};
\draw (x0) node[degree2]{} (x1) node[degree2]{};
\draw[realedges]  (x0)--(x1);
\draw[externaledges]
  (x0)-- ++(180:1) node[externalv](a){}
  (a) -- ++(90:1) node[externalv]{}
  (a) -- ++(270:1) node[externalv]{}
  (x1)-- ++(0:1) node[externalv](a){}
  (a) -- ++(90:1) node[externalv]{}
  (a) -- ++(270:1) node[externalv]{}
  (y0)-- ++(90:1) node[externalv](a){}
  (a) -- ++(45:1) node[externalv]{}
  (a) -- ++(135:1) node[externalv]{}
;  
\end{tikzpicture}
\hskip 2em
\begin{tikzpicture}[scale=1]  
\node at (0,0)[listsize2,label=above:$v_1$](x0){};
\node at (1,0)[listsize1](y0){};
\node at (2,0)[listsize1](y1){};
\node at (3,0) [listsize2,label=above:$v_2$](x1){};
\node at (1.5,-0.4) {$Z$};
\draw (x0) node[degree2]{} (x1) node[degree2]{};
\draw[realedges]  (x0)--(x1);
\draw[externaledges]
  (x0)-- ++(180:1) node[externalv](a){}
  (a) -- ++(90:1) node[externalv]{}
  (a) -- ++(270:1) node[externalv]{}
  (x1)-- ++(0:1) node[externalv](a){}
  (a) -- ++(90:1) node[externalv]{}
  (a) -- ++(270:1) node[externalv]{}
  (y0)-- ++(90:1) node[externalv](a){}
  (a) -- ++(90:1) node[externalv]{}
  (a) -- ++(135:1) node[externalv]{}
  (y1)-- ++(90:1) node[externalv](a){}
  (a) -- ++(45:1) node[externalv]{}
  (a) -- ++(90:1) node[externalv]{}
;  
\end{tikzpicture}
\end{center}
\caption{Path $Z$ connecting two vertices of distance at most three in Claim~\ref{claim1}. Dashed edges and gray vertices correspond to (possible) edges and vertices of $G$ outside of  $Z$.}\label{fig-2vertices}
\end{figure}
\end{proof}

 \begin{claim}\label{cl:bridge}
 If $e=uv$ is a bridge in $G$, then $u$ or $v$ is in $\mathcal{P}$.
 \end{claim}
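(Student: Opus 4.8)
The plan is to argue by contradiction using the minimality of $G$. Suppose $e=uv$ is a bridge with $u,v\notin\mathcal{P}$, and let $G_u\ni u$ and $G_v\ni v$ be the two components of $G-e$. First I record two structural facts. Since $g(G)\ge 6$, no vertex is adjacent to both $u$ and $v$ (it would form a triangle), so the only edges of $G^{(2)}$ joining $V(G_u)$ to $V(G_v)$ are $\{xv:x\in N_{G_u}(u)\}$ and $\{yu:y\in N_{G_v}(v)\}$; in particular $u\not\sim v$ in $G^{(2)}$. Second, because $uv$ is a bridge, the unique face $f_e$ incident to $e$ occupies both corners flanking $e$ at $v$, hence $f_e$ is incident to every edge at $v$ (and symmetrically at $u$); thus the path consisting of $v$ together with $N_{G_v}(v)$ lies entirely on $f_e$. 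I also note that by Claim~\ref{claim1} the vertices $u$ and $v$ are not both $2$-vertices, and by Claim~\ref{claimprecolored} every precolored path has three vertices and only its middle vertex is adjacent to anything outside the path.

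The core reduction colors one side by minimality and transfers the result across the bridge as a short precolored path. Assume first that $G_u$ has no precolored vertex adjacent to $u$. Color $G_v$ by the minimality of $G$: it is a connected plane subgraph with $\Delta\le 3$ and $g\ge 6$, its precolored paths still lie on a common face, and it has strictly fewer non-precolored vertices than $G$ (it omits $u$), so it admits an injective $L$-coloring $\varphi_v$. Now form $G_u^{+}$ from $G_u$ by reattaching $v$ and $N_{G_v}(v)$ and precoloring them by $\varphi_v$. These new precolored vertices form a single path on at most three vertices lying on $f_e$, so with any precolored path already in $G_u$ they give at most two precolored paths on one face; the precolored set is proper in $(G_u^{+})^{(2)}$ precisely because no precolored vertex of $G_u$ is adjacent to $u$ (the only possible cross-conflict would be between such a vertex and $v$); and $G_u^{+}$ has fewer non-precolored vertices than $G$. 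Minimality then colors $G_u^{+}$, and since $v$ and $N_{G_v}(v)$ carry the colors of $\varphi_v$, this coloring automatically respects the interface edges $\{xv\}$ and $\{yu\}$ identified above. Gluing the two colorings along $\{v\}\cup N_{G_v}(v)$ yields an injective $L$-coloring of $G$, a contradiction. By symmetry the argument also goes through if $G_v$ has no precolored vertex adjacent to $v$.

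The remaining, and \textbf{main}, obstacle is the symmetric case in which the middle vertex $p$ of a precolored path in $G_u$ is adjacent to $u$ and the middle vertex $q$ of a precolored path in $G_v$ is adjacent to $v$; then coloring either side first risks giving the opposite bridge endpoint a color conflicting with $p$ or $q$. After possibly swapping $u$ and $v$, I may assume $G_u$ is not a single bridge endpoint plus one precolored path, and I first color the combined instance $G[V(G_v)\cup\{u,p\}]$, in which $u$ is left uncolored and $p$ is precolored; because $u\sim q$ and $v\sim p$ in $G^{(2)}$, this forces the colors of $u$ and $v$ to avoid $\mathrm{pc}(q)$ and $\mathrm{pc}(p)$, which is exactly what keeps the transferred precolored set proper when I then color $G_u$ extended by $\{v\}\cup N_{G_v}(v)$ as before. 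The only delicate point left is the degenerate situation where one whole side is a bridge endpoint together with a single precolored path, so that side cannot be reduced by minimality (same number of non-precolored vertices, fewer precolored ones); but Claim~\ref{claim1} forbids both sides from being degenerate, and a degenerate side is handled directly, since its unique non-precolored vertex is a $2$-vertex with at most four neighbors in $G^{(2)}$ and so can be colored from its list of size at least five once the other side is colored. In every case one obtains an injective $L$-coloring of $G$, contradicting that $G$ is a counterexample; hence $u$ or $v$ lies in $\mathcal{P}$.
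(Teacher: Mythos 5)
Your core reduction is the same as the paper's: cut at the bridge, color one side by minimality, and transfer the closed neighborhood of a bridge endpoint to the other side as a precolored path on at most three vertices lying on the unique face incident to $e$ (your $G_u^{+}$ is the paper's $X_v' = G[X_v \cup N[u]]$ with the roles of the sides exchanged). Where you genuinely diverge is the hard case in which $u$ and $v$ each have a precolored neighbor ($p$ and $q$) on their own side. The paper never colors in this case: it chooses the bridge $e$ extremally at the outset (so that, if possible, one side of $e$ contains no precolored vertices), uses the transfer argument only to show that otherwise each of $u$ and $v$ has a precolored neighbor, and then argues structurally that $u$ is a $3$-vertex with a non-precolored neighbor $w$, that $uw$ is again a bridge with non-precolored endpoints, and that the component of $G-uw$ containing $w$ has no precolored vertices---contradicting the extremal choice of $e$. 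You instead resolve the hard case by a second coloring step: you enlarge the first-stage instance to $G[V(G_v)\cup\{u,p\}]$, keeping $p$ as a one-vertex precolored path so that the interface edges $vp$ and $uq$ of $G^{(2)}$ are enforced inside the smaller instance, and you correctly spot the degenerate situation (one side equal to a bridge endpoint plus a single precolored path) where minimality fails because only the precolored count drops, and rule out its occurring on both sides via Claim~\ref{claim1}. Your route is uniform and purely coloring-based, at the price of heavier case analysis; the paper's extremal choice buys a shorter argument that never needs one-vertex precolored paths or the degenerate-side discussion.

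Two loose ends need repair. First, in your opening case the assertion that the transferred path ``with any precolored path already in $G_u$'' yields at most two precolored paths on one face is false as stated: if both precolored paths of $G$ lie in $G_u$, your $G_u^{+}$ carries three precolored paths, and if exactly one lies in $G_u$ while $G_v$ has none, the bridge $e$ need not lie on the outer face, so the transferred path on $f_e$ and the original path on $F_o$ need not share a face. Both failures occur only when $G_v$ is free of precolored vertices, in which case your symmetric case applies (transfer toward $G_v$, producing a single precolored path, for which the common-face condition is harmless); so you must make the direction of transfer explicit---always transfer toward the side with no precolored path, and when precolored vertices sit on both sides, note that $F_o$ then meets both components of $G-e$ and hence is incident to $e$, giving $f_e=F_o$. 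This is exactly the bookkeeping the paper's choice of $e$ performs silently. Second, your closing remark that a degenerate side is ``handled directly'' by greedily coloring its unique non-precolored $2$-vertex $v$ last is not sound on its own: deleting $v$ leaves the constraint between $u$ and $q$ (whose common neighbor is $v$) unenforced in either remaining piece, and it also disconnects the graph, so minimality does not apply as stated. Fortunately this fallback is unnecessary: after your swap, your two-stage argument already covers a degenerate $G_v$, since $H=G[V(G_v)\cup\{u,p\}]$ still has strictly fewer non-precolored vertices than $G$ and contains both interface conflicts. Deleting that sentence, and fixing the transfer direction as above, makes your proof complete.
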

 \begin{proof}
 Suppose for contradiction that $e=uv$ is a bridge and neither $u$ nor $v$ is precolored.
 Denote the two connected components of $G-e$ by $X_u$ and $X_v$ where $u \in V(X_u)$ and $v \in V(X_v)$.
 Moreover, if possible, pick $e$ such that $X_v$ does not contain any precolored vertices.
 
 First we show that each of $u$ and $v$ have a neighbor in $\mathcal{P}$.
  To show this, assume that either $X_v$ does not contain any precolored vertices,
 or if both $X_u$ and  $X_v$ contain precolored vertices, that $v$ is not adjacent
 to any of them.
 
 By the minimality of $G$, there exists an injective $L$-coloring $\varphi$ of $X_u$.
 Let $X'_v = G[X_v \cup N[u]]$.
 We create a list assignment $L'$ for $X'_v$, where $L'(y) = L(y)$ if $y \in V(X_v)$ and $L'(y) = \{\varphi(y)\}$ if $y \in V(X_u)$.

 Observe that $X'_v$ with $L'$ is a plane graph with at most two precolored paths,
 one on the vertices $N[u] \cap X_u$ and possibly another one in $X_v$. 
 Moreover, the set of precolored vertices in $L'$ is
 proper since $v$ is not adjacent to precolored vertices in $X_v$.
 Finally, if there are two precolored paths in $X'_v$, they must both be  part of the outer face $F_o$ since the two precolored paths in $G$ are in $F_o$. 
 Hence $e$ is also in $F_o$.
 
 By the minimality of $G$, $X'_v$ has an injective $L'$-coloring $\rho$.
 Since $\rho$ and $\varphi$ agree on $u$ and its neighbors in $X_u$, it is possible
 to combine $\varphi$ and $\rho$ into an injective $L$-coloring of $G$, which is a contradiction.
 
 Hence we conclude that each of $u$ and $v$ have a neighbor in $\mathcal{P}$.
 By Claim~\ref{claim1}, $u$ cannot be a 2-vertex. 
 Then $u$ is a 3-vertex and has a non-precolored neighbor $w$ distinct from $v$.
 Since $uv$ is a bridge, $uw$ is also a bridge. 
 Since only two vertices in $G-\mathcal{P}$ have neighbors in $\mathcal{P}$, we  get
 a contradiction with our choice of $e$ since $vw$ is a bridge and 
 the connected component of $G-vw$ containing $w$ has no precolored vertices.
 \end{proof}

\begin{claim}\label{cl:noP2}
If a vertex $v$ has two precolored neighbors, then $v$ is also precolored.
\end{claim}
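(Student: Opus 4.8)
Suppose for contradiction that $v \notin \mathcal{P}$ has two precolored neighbors $a$ and $b$. The plan is to pin down the local structure, perform a reduction that precolors $v$ and deletes four pendant precolored vertices, apply minimality to the resulting smaller instance, and then lift the coloring back to $G$. First I would determine the structure around $v$. Since $a$ is precolored and has the neighbor $v$ outside its precolored path, the definition of a precolored path forces $a$ to be the unique vertex of its path with an external neighbor, hence its middle vertex; the same holds for $b$. If $a$ and $b$ lay in the same path, that path would have two vertices with external neighbors, which is forbidden, so $a$ and $b$ are the middle vertices of the two distinct precolored paths $P = p_1 p_2 p_3$ (with $a = p_2$) and $Q = q_1 q_2 q_3$ (with $b = q_2$). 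By Claim~\ref{claimprecolored} each path has three vertices and no precolored $2$-vertex, so $p_1,p_3,q_1,q_3$ are leaves and $a,b$ are $3$-vertices whose only external neighbor is $v$. Thus $N(a) = \{p_1,p_3,v\}$, $N(b) = \{q_1,q_3,v\}$, and $v$ has at most one further neighbor $w \notin \mathcal{P}$ (a third precolored neighbor of $v$ would be the middle of a third precolored path, contradicting that there are at most two).

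Next I would carry out the reduction. Choose a color $c \in L(v)$ avoiding the four fixed colors of the precolored leaves $p_1,p_3,q_1,q_3$; this is possible since $|L(v)| \geq 5$. Form $G''$ from $G$ by deleting $p_1,p_3,q_1,q_3$ and setting $L(v) = \{c\}$. In $G''$ the precolored vertices are exactly $\{p_2, v, q_2\}$, and they form a single precolored path $p_2\,v\,q_2$ whose middle $v$ has the single external neighbor $w$ (if $w$ exists; otherwise $v$ has none, which is still admissible). This set is proper because the only pair of precolored vertices adjacent in $(G'')^{(2)}$ is $\{p_2,q_2\}$, which receive distinct colors since they share the neighbor $v$ in $G$ and $\mathcal{P}$ is proper. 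Moreover $G''$ is a connected plane graph with $\Delta \leq 3$ and girth at least $6$ (deleting leaves preserves connectivity and cannot decrease girth), and it has one fewer non-precolored vertex than $G$.

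The one hypothesis requiring care is that the precolored vertices lie in a common face, and this is where I would be most careful. In $G''$ both $p_2$ and $q_2$ are leaves attached to $v$ and $\deg_{G''}(v) \leq 3$; among at most three edges at $v$ every pair bounds a common face, and since $p_2$ and $q_2$ are leaves, the unique face incident to each is the face of the bridge $vp_2$, respectively $vq_2$, so these faces coincide. Hence $p_2, v, q_2$ lie on one face and $G''$ satisfies all hypotheses of Theorem~\ref{thm:ICN:realresult}. By the minimality of $G$, the instance $G''$ has an injective $L$-coloring $\psi$. I would then lift $\psi$ to $G$ by restoring $p_1,p_3,q_1,q_3$ with their precolors. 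The only new injective constraints created involve these leaves: through the common neighbor $p_2$ the colors on $p_1,p_3,v$ must be pairwise distinct, and likewise on $q_1,q_3,v$ through $q_2$. The pairs $\{p_1,p_3\}$ and $\{q_1,q_3\}$ differ because $\mathcal{P}$ is proper, and $v=c$ differs from all four leaf colors by the choice of $c$. Therefore $\psi$ together with the leaf precolors is an injective $L$-coloring of $G$, contradicting that $G$ is a counterexample.

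The main obstacle is verifying that $G''$ genuinely meets every hypothesis of Theorem~\ref{thm:ICN:realresult}, in particular the single-face condition, so that minimality may be applied; the leaf-plus-degree-$3$ rotation observation above is what makes this clean. A secondary point worth stressing is the bookkeeping that, after reinstating the leaves, $v$ acquires only the four leaf colors as forbidden values, rather than also inheriting constraints through $w$. This is precisely why precoloring $v$ and deleting the leaves succeeds where simply deleting $v$ and recoloring would leave $v$ with up to six forbidden colors against a list of size only five.
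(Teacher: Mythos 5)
Your proof is correct, but it takes a genuinely different route from the paper's. The paper dispatches this claim in two lines by reusing earlier structure: if $\deg(v)=2$, both neighbors of $v$ are middle vertices of precolored paths, so connectivity forces $v$ to be the \emph{only} non-precolored vertex, contradicting Claim~\ref{claimprecolored}; if $\deg(v)=3$, the edge $vw$ to the unique non-precolored neighbor $w$ is a bridge with neither endpoint precolored, contradicting Claim~\ref{cl:bridge}. In other words, the paper gets a purely structural contradiction, letting Claim~\ref{cl:bridge} carry all of the cut-and-recolor machinery. You instead rebuild a coloring contradiction from scratch: you precolor $v$ with a color $c\in L(v)$ avoiding the four leaf colors, delete $p_1,p_3,q_1,q_3$, and observe that the smaller instance has the single precolored path $p_2\,v\,q_2$, which you correctly verify satisfies every hypothesis of Theorem~\ref{thm:ICN:realresult} --- the properness check (the only $G''^{(2)}$-adjacent precolored pair is $\{p_2,q_2\}$, colored distinctly since they share the neighbor $v$ in $G$), the path condition (only the middle vertex $v$ has an external neighbor), and, most delicately, the single-face condition via the rotation argument at the degree-$\le 3$ vertex $v$ with two pendant leaves. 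The lifting step is also sound: deleted leaves have degree one, so they are never common neighbors, whence $G''^{(2)}=G^{(2)}$ restricted to $V(G'')$, and the only new constraints are the pairwise distinctness of $\{p_1,p_3,c\}$ and $\{q_1,q_3,c\}$, guaranteed by properness of $\mathcal{P}$ and the choice of $c$; the degenerate case $\deg_G(v)=2$ (where $G''$ is fully precolored) also goes through. What each approach buys: the paper's argument is far shorter and shows the claim is an immediate corollary of the bridge and path-structure claims, while yours is self-contained modulo Claim~\ref{claimprecolored} (it never invokes Claim~\ref{cl:bridge}), exploits the second minimality criterion (fewer non-precolored vertices) directly, and amounts to a reusable ``merge two precolored paths through $v$'' reduction --- at the cost of substantially more hypothesis-checking than the situation requires.
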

\begin{proof}
Suppose for contradiction that $v$ is not precolored.
If $v$ is a vertex of degree two, the connectivity of $G$ implies that     $v$ is the only non-precolored vertex, contradicting Claim~\ref{claimprecolored}.
If $v$ is a vertex of degree three, then consider its non-precolored neighbor $w$.
The edge $vw$ has no endpoint precolored and it is a bridge, contradicting Claim~\ref{cl:bridge}.
\end{proof}

\begin{claim}\label{cl:leaf}
If $v$ is a vertex of degree one in $G$, then $v$ is precolored.
\end{claim}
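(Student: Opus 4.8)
\emph{Proof proposal.} The plan is a straightforward deletion-and-extension argument. Suppose for contradiction that $v$ is a $1$-vertex of $G$ that is not precolored, and let $u$ be the unique neighbor of $v$. I would delete $v$, color what remains using the minimality of $G$, and then show that $v$ can be given a color from $L(v)$ consistent with the injective constraint, thereby producing an injective $L$-coloring of all of $G$ and contradicting that $G$ is a counterexample.

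First I would consider $G' = G - v$. Since $v$ is a leaf of the connected graph $G$, the graph $G'$ is still connected, still has $\Delta(G') \leq 3$, and still has $g(G') \geq 6$ (deleting a vertex cannot create shorter cycles). Because $v \notin \mathcal{P}$, all precolored vertices survive in $G'$, so the set of precolored vertices is unchanged; it is still proper and still forms at most two precolored paths, each on at most three vertices. Thus $G'$ satisfies the hypotheses of Theorem~\ref{thm:ICN:realresult} and has exactly one fewer non-precolored vertex than $G$. By the minimality of $G$, the graph $G'$ therefore admits an injective $L$-coloring $\varphi$.

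It remains to extend $\varphi$ to $v$. The only vertices adjacent to $v$ in $G^{(2)}$ are the vertices sharing a common neighbor with $v$, and the unique neighbor of $v$ is $u$; hence $N_{G^{(2)}}(v)$ consists precisely of the neighbors of $u$ other than $v$. Since $\deg(u) \leq 3$, we have $|N_{G^{(2)}}(v)| \leq \deg(u) - 1 \leq 2$, so at most two colors are forbidden for $v$. As $|L(v)| \geq 5 > 2$, there is a color in $L(v)$ avoiding all of them; assigning it to $v$ and keeping $\varphi$ elsewhere yields an injective $L$-coloring of $G$, the desired contradiction. (Note that by Claim~\ref{cl:bridge}, applied to the bridge $uv$, one could alternatively deduce directly that $u$ must be precolored once $v$ is not, but this is not needed for the extension.)

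The only step requiring a moment of care—the main obstacle, though a mild one—is verifying that the face condition of Theorem~\ref{thm:ICN:realresult} is preserved under the deletion. Here I would observe that $v$ is pendant, so the edge $uv$ is a bridge incident to a single face; removing $v$ merely trims this spike from that face and cannot merge two faces or move any precolored vertex out of the common face it occupied in $G$. Consequently all precolored vertices of $G'$ still lie in a single face, and the hypotheses of the theorem genuinely transfer to $G'$, legitimizing the appeal to minimality.
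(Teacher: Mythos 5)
Your proposal is correct and follows essentially the same deletion-and-extension argument as the paper: remove the leaf $v$, invoke minimality to color $G-v$, and extend since $v$ has few neighbors in $G^{(2)}$ while $|L(v)|\geq 5$. Your bound $|N_{G^{(2)}}(v)|\leq 2$ is in fact sharper than the paper's stated ``at most $4$'' (either suffices), and your explicit verification that the hypotheses of Theorem~\ref{thm:ICN:realresult} transfer to $G-v$ is a careful rendering of what the paper leaves implicit.
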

\begin{proof}
Suppose for contradiction that $v$ is a vertex of degree one that is not precolored.
By the minimality of $G$, there exists an injective $L$-coloring $\varphi$ of $G-v$.
Since $v$ has at most 4 neighbors in $G^{(2)}$ and $|L(v)| \geq 5$, it is possible
to extend $\varphi$ to an injective $L$-coloring of $G$, which is a contradiction to the minimality of $G$.
\end{proof}

\begin{claim}\label{cl:face}
Every face of $G-\mathcal{P}$ is bounded by a cycle. 
\end{claim}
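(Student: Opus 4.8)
The plan is to reduce the claim to $2$-connectivity: I would show that $G':=G-\mathcal{P}$ is $2$-connected and then invoke the standard fact that every face of a $2$-connected plane graph on at least three vertices is bounded by a cycle. The first step is to pin down the shape of the precolored paths. By Claim~\ref{claimprecolored} every precolored path is a $P_3$, say $x_1x_2x_3$, and each vertex of $\mathcal{P}$ has degree $1$ or $3$. By the definition of a precolored path, only the vertex of maximum degree in the path---here the middle vertex $x_2$---may have a neighbor outside the path, so $\deg_G(x_1)=\deg_G(x_3)=1$, and hence $\deg_G(x_2)=3$ with a single neighbor $w\in V(G')$ (at least one such neighbor exists since $G$ is connected and has non-precolored vertices by Claim~\ref{claimprecolored}). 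Thus each precolored path is a pendant structure attached to $G'$ through the one edge $x_2w$, so removing $\mathcal{P}$ only deletes pendant subtrees; in particular $G'$ is connected.

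Next I would show that $G'$ is bridgeless. If $e=uv$ were a bridge of $G'$, then because the at most two precolored paths rejoin $G'$ only through single edges and lie entirely on one side of $e$, adding $\mathcal{P}$ back cannot create a $u$--$v$ path avoiding $e$. Hence $e$ would be a bridge of $G$ with neither endpoint in $\mathcal{P}$, contradicting Claim~\ref{cl:bridge}. So $G'$ has no bridge; in particular it has no vertex of degree one (a pendant edge is a bridge), so the minimum degree of $G'$ is at least $2$ and $|V(G')|\ge 3$.

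The crux will be ruling out cut vertices, since none of the earlier claims forbids them directly; the idea is to use $\Delta(G)\le 3$ to convert a cut vertex into a bridge. Suppose $z$ is a cut vertex of $G'$, so that $G'-z$ has $k\ge 2$ components, each meeting $z$ in at least one edge because $G'$ is connected. The at most three neighbors of $z$ are therefore split among the $k\ge 2$ components with each component receiving at least one, so some component receives exactly one neighbor $a$ of $z$; then $za$ is a bridge of $G'$, contradicting the previous step. Hence $G'$ has no cut vertex, so $G'$ is $2$-connected, and every face of $G'$ is bounded by a cycle. The only degenerate possibilities to watch for are the very small cases, which are excluded precisely by the bound $|V(G')|\ge 3$ recorded above.
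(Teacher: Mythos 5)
Your proof is correct and takes essentially the same route as the paper: the paper's one-line argument derives bridgelessness of $G-\mathcal{P}$ from Claims~\ref{cl:bridge}, \ref{cl:noP2}, and \ref{cl:leaf} and then concludes that every face is bounded by a cycle. Your explicit intermediate step---that $\Delta(G)\le 3$ turns any cut vertex of $G-\mathcal{P}$ into a bridge, so bridgeless implies $2$-connected---is precisely the detail the paper leaves implicit (bridgelessness alone would not suffice, as two cycles meeting at a degree-$4$ vertex show), so your write-up is a valid, fleshed-out version of the same argument.
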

\begin{proof}
Claims~\ref{cl:bridge}, \ref{cl:noP2}, and \ref{cl:leaf} imply that $G-\mathcal{P}$ is bridgeless and every face $f$ of $G-\mathcal{P}$ is 
bounded by a cycle of length $\ell(f)$.
\end{proof}

\begin{claim}\label{cl2edgeconnected}
Let $e_1$ and $e_2$ be edges with distinct endpoints 
such that $G-\{e_1,e_2\}$ is disconnected but neither $e_1$ nor $e_2$ is a bridge.
Then each connected component of $G-\{e_1,e_2\}$ contains precolored vertices.
In addition, both $e_1$ and $e_2$ are in the outer face.
\end{claim}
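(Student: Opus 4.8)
Since neither $e_1$ nor $e_2$ is a bridge, $\{e_1,e_2\}$ is a minimal edge cut, so $G-\{e_1,e_2\}$ has exactly two components $X$ and $Y$; write $e_1=x_1y_1$ and $e_2=x_2y_2$ with $x_1,x_2\in V(X)$, $y_1,y_2\in V(Y)$ (all four distinct since the endpoints are distinct). A size-two minimal edge cut is dual to a $2$-cycle in the plane dual, so $e_1,e_2$ are exactly the two edges shared by two faces $f_1,f_2$ of $G$; in particular $x_1,x_2$ lie on a common face of $G$, and so do $y_1,y_2$. The plan is to use that the simple closed curve realizing the cut meets $G$ only inside $e_1,e_2$ and passes through $f_1,f_2$, so \emph{every face other than $f_1,f_2$ is incident to vertices of only one of $X,Y$}. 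I will prove the two assertions in the order stated, the second following cleanly from the first.

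\emph{Each component contains a precolored vertex.} Suppose not; say $V(Y)\cap\mathcal P=\emptyset$. First I would note that $X$ has a non-precolored vertex: otherwise $V(X)\subseteq\mathcal P$, and since $X$ is connected and $\mathcal P$ forms at most two vertex-disjoint precolored paths, $X$ would be a single precolored path; but then both $x_1$ and $x_2$ have a neighbor ($y_1$, resp.\ $y_2$) outside that path, contradicting that a precolored path has at most one such vertex (cf.\ Claim~\ref{claimprecolored}). Hence $G[X]$ is a valid instance with strictly fewer non-precolored vertices than $G$, all of $\mathcal P$ on a common face, so by minimality it has an injective $L$-coloring $\varphi$. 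I would then extend $\varphi$ to $Y$ by building an auxiliary graph $\widetilde Y$ that records the relevant boundary colors without importing the structure of $X$: start from $G[Y]$ and, for $i\in\{1,2\}$, attach to $y_i$ a new precolored vertex $x_i^{*}$ colored $\varphi(x_i)$, together with one precolored leaf on $x_i^{*}$, colored $\varphi(w)$, for each $w\in N_G(x_i)\cap V(X)$. Each gadget is a precolored path on at most three vertices centered at $x_i^{*}$, whose only vertex with a neighbor outside the gadget is $x_i^{*}$ (adjacent to $y_i$); placing both gadgets inside the common face of $G[Y]$ that contains $y_1,y_2$ makes them into at most two precolored paths on one face. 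Since the gadgets add no cycles, $\widetilde Y$ is plane with $\Delta\le3$ and girth $\ge6$, its precolored set is proper (it copies values of the injective coloring $\varphi$), and its non-precolored vertices are exactly $V(Y)$, strictly fewer than in $G$. By minimality $\widetilde Y$ has an injective $L$-coloring $\rho$. The key point to verify is that $\varphi$ on $X$ and $\rho$ on $Y$ glue to an injective $L$-coloring of $G$: every $G^{(2)}$-edge with both ends on one side is preserved, and every cross edge comes from a common neighbor equal to some $x_i$ or $y_i$ — precisely the adjacencies encoded, since the leaves force $\rho(y_i)\notin\varphi\bigl(N_G(x_i)\cap V(X)\bigr)$ and $x_i^{*}=\varphi(x_i)$ forces $\varphi(x_i)$ off $N_G(y_i)\cap V(Y)$. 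This contradiction establishes the assertion.

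\emph{Both edges lie in the outer face.} By the previous paragraph each of $X,Y$ contains a precolored vertex, and by hypothesis all of $\mathcal P$ lies on the outer face $F_o$. By the separation property recorded in the first paragraph, any face other than $f_1,f_2$ is incident to vertices of a single side; since $F_o$ is incident to precolored vertices of both $X$ and $Y$, we must have $F_o\in\{f_1,f_2\}$. As $e_1$ and $e_2$ are incident to both $f_1$ and $f_2$, both lie on $F_o$.

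The step I expect to be the main obstacle is the construction of $\widetilde Y$: verifying that the two gadgets really form at most two precolored paths on a single common face (which is exactly where the duality fact that $y_1,y_2$ share a face of $G[Y]$ is needed) and then the bookkeeping that the leaf colors together with $x_i^{*}=\varphi(x_i)$ capture \emph{every} $G^{(2)}$-adjacency across the cut, including the case $y_1\sim y_2$. The potentially delicate situation where $x_1,x_2$ are close in $X$ (adjacent, or with a common neighbor) is sidestepped precisely because the gadgets copy only the colors $\varphi(x_i)$ and $\varphi\bigl(N_G(x_i)\cap V(X)\bigr)$ rather than the induced subgraph on $N[x_1]\cup N[x_2]$.
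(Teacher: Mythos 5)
Your proof is correct and takes essentially the same route as the paper: color the component $X$ by minimality, then attach to each $y_i$ a precolored star recording $\varphi(x_i)$ and $\varphi\bigl(N_G(x_i)\cap V(X)\bigr)$ and invoke minimality again — this is exactly the paper's auxiliary graph $Y'$ up to renaming $u_i$ as the copy $x_i^{*}$, with your fresh copies automatically handling the cases ($u_1u_2\in E(G)$, common neighbor in $X$) that the paper treats by explicit edge replacement. Your dual-$2$-cycle argument for the outer-face assertion spells out a step the paper leaves implicit, and it is sound.
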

\begin{proof}
Let $e_1$ and $e_2$ be edges  with distinct endpoints such that $G-\{e_1,e_2\}$ is disconnected but neither $e_1$
nor $e_2$ is a bridge.

Since neither $e_1$ nor $e_2$ is a bridge, $G-\{e_1,e_2\}$ contains exactly two connected components $X$ and $Y$.
Suppose for contradiction that $Y$ does not contain any precolored vertices.

By the minimality of $G$, there is an injective $L$-coloring $\varphi$ of $X$.
Let $u_i$ be a vertex of $e_i$ in $V(X)$ for $i \in \{1,2\}$.
Notice that neither $u_1$ nor $u_2$ is precolored since precolored vertices
are incident only to bridges.

Next we build a graph $Y'$ and a list assignment $L'$.
We start with $Y'=Y$  and  define $L'(v) = L(v)$ for all $v \in V(Y')$.
Then for $i\in\{1,2\}$ we add $u_i$ and $e_i$ to $Y'$ and define $L'(u_i) = \varphi(u_i)$.
Then for every $x \in N_X(u_i)$ we add to $Y'$ a new vertex $u_i^x$ adjacent to $u_i$
and define $L'(u_i^x) = \varphi(x)$; see Figure~\ref{fig:claim2}.

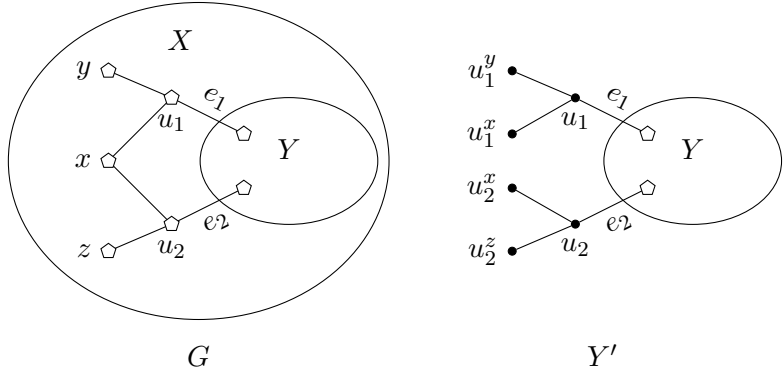
\begin{figure}
\begin{center}
\tikzstyle{Bvertex}=[circle, black, fill, draw, inner sep=0pt, minimum size=3pt]
\newcommand{\Bvertex}{\node[Bvertex]}
	\begin{tikzpicture}[scale=1.2]
		\node[listsize5] (1) at (-.5,.3) []{};
		\node[listsize5] (2) at (-.5, -.3) []{};
		\node[listsize5] (3) at (-1.3, .7) [label=below:$u_1$]{};
		\node[listsize5] (4) at (-1.3, -.7) [label=below:$u_2$]{};
		\node[listsize5] (5) at (-2,0) [label=left:$x$]{};
		\node[listsize5] (6) at (-2,1) [label=left:$y$]{};
		\node[listsize5] (7) at (-2,-1) [label=left:$z$]{};
		\node at (0,-.2)[label=$Y$]{}; 
		\node at (-1.2,1)[label=$X$]{}; 
		\node at (-1,-2.5)[label=$G$]{}; 
		\draw (1) -- (3) node[midway,sloped,above] {$e_1$};
		\draw (5) to (3);
		\draw (6) to (3);
		\draw (2) -- (4) node[midway,sloped,below] {$e_2$};
		\draw (5) to (4);
		\draw (7) to (4);
		\draw (0,0) ellipse (28pt and 20pt) {};
		\draw (-1,0) ellipse (60pt and 50pt);
	\end{tikzpicture}
	\hskip 2em
		\begin{tikzpicture}[scale=1.2]
	  \node[listsize5] (1) at (-.5,.3) []{};
		\node[listsize5] (2) at (-.5, -.3) []{};
		\Bvertex (3) at (-1.3, .7) [label=below:$u_1$]{};
		\Bvertex (4) at (-1.3, -.7) [label=below:$u_2$]{};
		\Bvertex (5) at (-2,0.3) [label=left:$u_1^x$]{};
		\Bvertex (8) at (-2,-0.3) [label=left:$u_2^x$]{};
		\Bvertex (6) at (-2,1) [label=left:$u_1^y$]{};
		\Bvertex (7) at (-2,-1) [label=left:$u_2^z$]{};
		\node at (0,-.2)[label=$Y$]{}; 
		\node at (-1,-2.5)[label=$Y'$]{}; 
		\draw (1) -- (3) node[midway,sloped,above] {$e_1$};
		\draw (5) to (3);
		\draw (6) to (3);
		\draw (2) -- (4) node[midway,sloped,below] {$e_2$};
		\draw (8) to (4);
		\draw (7) to (4);
		\draw (0,0) ellipse (28pt and 20pt) {};
	\end{tikzpicture}
\end{center}
\caption{Dealing with edges $e_1$ and $e_2$ that form a cut in Claim~\ref{cl2edgeconnected}.}\label{fig:claim2}
\end{figure}

Notice that if $u_1$ and $u_2$ have a common neighbor $x$, there
are two vertices $u_1^x$ and $u_2^x$ corresponding to $x$ in $Y'$.
Similarly, if  $u_1$ and $u_2$ are adjacent in $G$, 
we have edges $u_1u_1^{u_2}$ and $u_2u_2^{u_1}$ instead of the edge $u_1u_2$ in $Y'$.
We do this to keep the assumption
that precolored vertices form two paths. Since $e_1$ and $e_2$ do not share
vertices, the precolored vertices in $Y'$ with $L'$ are proper.

By the minimality of $G$, there is an injective $L'$-coloring $\rho$ of $Y'$.
Since  $\varphi$ and $\rho$ agree on $u_1$, $u_2$ and their neighbors in $X$,
it is possible to combine $\varphi$ and $\rho$ and obtain an injective $L$-coloring
of $G$, which is a contradiction.
\end{proof}

\subsection{Reducible Configurations}\label{secreducible}
A \emph{configuration} is a pair $(H,\md)$, where $H$ is a plane graph and $\md$ is a mapping $\md: V(H) \rightarrow \mathbb{N}$.
The notation $\md$ stands for maximum degree.

A \emph{basic reducible configuration} is a configuration $(H,\md)$, where $H$ is one of the plane graphs $C_{2,1},\ldots,C_{10,3}$ depicted in Figures~\ref{fig-2x}--\ref{fig-10x} and $\md$ of a vertex $v$ can be read from the
figure of $H$ by adding the degree of $v$ and the number of incident dashed edges.
We denote the set of basic reducible configurations by $\mathcal{B}$.

Also note the graphs in Figures~\ref{fig-2x}--\ref{fig-10x} are embedded in the plane and each face, except the outer face, bounded by a cycle $C$ is labeled with a number $\ell$ equal to the size of $C$.
If we are dealing with the outer face $F_o$, the cycle $C$ does not form the entire boundary of $F_o$ as 
$F_o$ could also contain precolored vertices. 
In this case, $\ell$ corresponds to the length of the outer face of $G-\mathcal{P}$. 
Figures~\ref{fig-2x}--\ref{fig-10x} are moved to the next section in order to make it easier
for the reader to find them when they are actually needed.

Next we obtain the set of \emph{reducible configurations} $\mathcal{R}$
by taking $\mathcal{B}$ together with
configurations obtained from $(H,\md) \in \mathcal{B}$; these are obtained by identification 
of two vertices of degree 1 into a new vertex $w$ and defining $\md(w)=2$.
We  keep in $\mathcal{R}$ only configurations with plane graphs of girth at least 6.

We say that $(H,\md)$ \emph{appears} in $G$ if $G$ contains a subgraph $H'$ isomorphic to $H$ and
for every vertex $v$ of $H'$, $\deg_G(v) \leq \md(v)$, where $\md$ is defined on $H'$ by its isomorphism to $H$.

We plan to show that no reducible configuration $(H,\md)$ appears in $G$.
If $(H,\md)$ appears in $G$, we wish to obtain a contradiction by finding an injective $L$-coloring $\varphi$ of  $G-H'$, where $H'$ is the isomorphic copy of $H$, and then by extending $\varphi$ to an injective $L$-coloring to $H'$.
To this end, we need to consider the subgraph $W$ of $G^{(2)}$ induced by vertices of $H'$.
It may happen that $W$ is not isomorphic to $H'^{(2)}$ if
$H'$ is not an induced subgraph of $G$ or some of the vertices in $H'$ have a common neighbor that is not in $H'$. 
We cover these cases by expanding the set of reducible configurations, as follows.

Let $\mathcal{A}$ be obtained from $\mathcal{R}$ by possibly repeating any of the following operations
to configurations $(H,\md)$ already in $\mathcal{A}$:
\begin{itemize}
\item Add an edge between two vertices $u$ and $v$ where $\md(u) > \deg(u)$ and $\md(v) > \deg(v)$.
\item Add a new vertex $w$ adjacent to $u$ and $v$ where $\md(u) > \deg(u)$ and $\md(v) > \deg(v)$ and let $\md(w)=3$.
\end{itemize}
Notice that each operation decreases the number of vertices $v$ in $H$ where $\deg(v) < \md(v)$, hence $\mathcal{A}$ is finite.
We call $\mathcal{A}$ \emph{all reducible configurations}.
Note also that if $(H,\md)\in \mathcal{R}$ appears in $G$ then there exists $(H',\md')\in \mathcal{A}$
such that $G$ contains an isomorphic copy $X$ of $H'$
and  $H'^{(2)}$ is isomorphic to $G^{(2)}$ restricted to the vertices of $X$. 

Let $\mathcal{E}$ be the configurations from $\mathcal{A}$ containing $X_1$ or $X_2$ that are depicted in Figure~\ref{fig-E}.
The configurations in $\mathcal{E}$ are not injectively colorable from the depicted lists.
We call these configurations \emph{exceptions}.

\begin{figure}[h!]
\[
\begin{array}{cccc}
\vc{
\begin{tikzpicture}[scale=2]  
\node at (0.0,0.0) [listsize4](x0){};
\node at (-0.4,0) [listsize4](x1){};
\node at (-0.6,-0.35) [listsize4](x2){};
\node at (-0.6,-0.7) [listsize4](x3){};
\node at (-0.4,-1) [listsize2](x4){};
\node at (-0.0,-1) [listsize1](x5){};
\node at (0.2,-0.7) [listsize1](x6){};
\node at (0.2,-0.35) [listsize2](x7){};
\node at (0.2,0.4) [listsize4](x8){};
\node at (0,0.8) [listsize3](x9){};
\node at (-0.4,0.8) [listsize2](x10){};
\node at (-0.6,0.4) [listsize2](x11){};
\node at (-1,0.8) [listsize3](x12){};
\coordinate (x13) at (-0.5,-1.2);
\coordinate (x14) at (0.1,-1.2);
\coordinate (x15) at (0.4,-0.8);
\coordinate (x16) at (0.4,-0.35);
\coordinate (x17) at (-1.3,0.8);
\coordinate (x18) at (-0.5,1);
\coordinate (x19) at (-0.8,0.4);
\draw (x3) node[degree2]{} (x8) node[degree2]{};
\draw[realedges]  (x0)--(x1) (x0)--(x7) (x0)--(x8) (x1)--(x2) (x1)--(x11) (x2)--(x3) (x3)--(x4) (x4)--(x5) (x5)--(x6) (x6)--(x7) (x8)--(x9) (x9)--(x10) (x10)--(x11);
\draw[externaledges]   (x4)--(x13) (x5)--(x14) (x6)--(x15) (x7)--(x16) (x12)--(x17) (x10)--(x18) (x11)--(x19);\
\draw[line width=2pt] (x1)--node[pos=0.5,label=left:$e_1$]{}(x11) (x10)--node[pos=0.5,label=below:$e_2$]{}(x9);
\draw (x2) to[out=180,in=270] (x12) (x12)to[out=90,in=90]  (x9);
\node at (-0.2,-0.5){ 8 };
\node at (-0.2,.35){6};   
\end{tikzpicture}
}
&
\vc{
\begin{tikzpicture}[scale=2]  
\node at (0.0,0.0) [listsize4](x0){};
\node at (-0.4,0) [listsize4](x1){};
\node at (-0.6,-0.35) [listsize4](x2){};
\node at (-0.6,-0.7) [listsize4](x3){};
\node at (-0.4,-1) [listsize2](x4){};
\node at (-0.0,-1) [listsize1](x5){};
\node at (0.2,-0.7) [listsize1](x6){};
\node at (0.2,-0.35) [listsize2](x7){};
\node at (0.2,0.4) [listsize4](x8){};
\node at (0,0.8) [listsize3](x9){};
\node at (-0.4,0.8) [listsize2](x10){};
\node at (-0.6,0.4) [listsize2](x11){};
\node at (-1,0.8) [listsize3](x12){};
\coordinate (x13) at (-0.5,-1.2);
\coordinate (x14) at (0.1,-1.2);
\coordinate (x15) at (0.4,-0.8);
\coordinate (x16) at (0.4,-0.35);
\coordinate (x17) at (-0.7,0.8);
\coordinate (x18) at (-0.5,1);
\coordinate (x19) at (-0.8,0.4);
\draw (x3) node[degree2]{} (x8) node[degree2]{};
\draw[realedges]  (x0)--(x1) (x0)--(x7) (x0)--(x8) (x1)--(x2) (x1)--(x11) (x2)--(x3) (x3)--(x4) (x4)--(x5) (x5)--(x6) (x6)--(x7) (x8)--(x9) (x9)--(x10) (x10)--(x11);
\draw[externaledges]   (x4)--(x13) (x5)--(x14) (x6)--(x15) (x7)--(x16) (x12)--(x17) (x10)--(x18) (x11)--(x19);
\draw[line width=2pt] (x2)--node[pos=0.5,label=left:$e_1$]{}(x3) (x0)--node[pos=0.5,label=right:$e_2$]{}(x7);
\draw (x2) to[out=180,in=270] (x12) (x12)to[out=90,in=90]  (x9);
\node at (-0.2,-0.5){ 8 };
\node at (-0.2,.35){6}; 
\end{tikzpicture}
}
&
\vc{
\begin{tikzpicture}[scale=2]  
\clip (-0.8,-1.2) rectangle (0.8,0.7);
\node at (0.0,0.0) [listsize3](x0){};
\node at (-0.35,0) [listsize3](x1){};
\node at (-.5,-0.5) [listsize2](x2){};
\node at (-0.35,-1) [listsize3](x3){};
\node at (0.0,-1) [listsize3](x4){};
\node at (0.0,-0.5) [listsize5](x5){};
\node at (0.35,0) [listsize3](x6){};
\node at (0.5,-0.5) [listsize2](x7){};
\node at (0.35,-1) [listsize3](x8){};
\node at (0,0.5) [listsize3](x15){};
\node at (-0.6,0.3) [listsize1](x9){};
\node at (0.6,0.3) [listsize1](x12){};
\coordinate (x10) at (0,0.7);
\coordinate (x11) at (-0.6,-1);
\coordinate (x13) at (.75,-.5);
\coordinate (x14) at (.6,-1);
\draw (x5) node[degree2]{};
\draw[realedges]  (x0)--(x1) (x0)--(x5) (x0)--(x6) (x1)--(x2) (x2)--(x3) (x3)--(x4) (x4)--(x5) (x4)--(x8) (x6)--(x7) (x7)--(x8);
\draw[externaledges]  (x1)--(x9) (x15)--(x10) (x3)--(x11) (x6)--(x12) (x8)--(x14);
\draw[realedges] (x7) to[out=60,in=0,looseness=2] (x15) (x2) to[out=120,in=180,looseness=2] (x15); 
\draw[line width=2pt] 
 (x6)--node[pos=0.5,label=left:$e_2$]{}(x12)
 (x1)--node[pos=0.5,label=right:$e_1$]{}(x9);
\node at (-.25,-.5){ 6};  
\node at (.25,-.5){ 6};    
\end{tikzpicture}
}
\\[10pt]
X_1 & X_1 & X_2  
\end{array}
\]
\caption{Exceptional graphs $X_1$ and $X_2$.}\label{fig-E}
\end{figure}
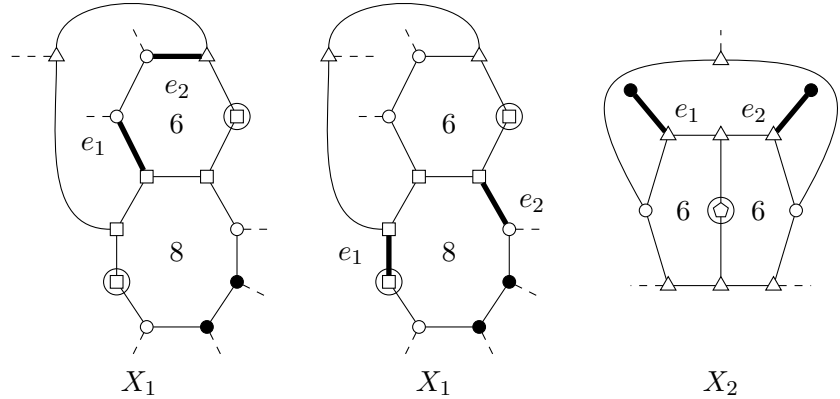

\begin{claim}\label{cl:reducible}
If $(H,\md)\in \mathcal{A}\setminus\mathcal{E}$ then $(H,\md)$ does not appear in $G$.
\end{claim}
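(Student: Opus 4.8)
The plan is to prove that every $(H,\md)\in\mathcal{A}\setminus\mathcal{E}$ is \emph{reducible}, in the sense that assuming it appears in $G$ leads to a contradiction. So suppose $(H,\md)$ appears via an isomorphic copy $H'$, and write $W := G^{(2)}[V(H')]$ for the neighboring graph that $G$ actually induces on the copy. The first step is to reduce to the case $W \cong H'^{(2)}$. This is exactly what the enlargement of $\mathcal{R}$ to $\mathcal{A}$ is designed for: the only ways an edge of $W$ can fail to be present in $H'^{(2)}$ are that $H'$ is not induced in $G$ (two of its vertices are joined by an edge of $G$) or that two of its vertices share a common neighbor lying outside $H'$, and these are precisely the two operations generating $\mathcal{A}$. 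Hence, among all configurations of $\mathcal{A}$ that appear in $G$ and contain $H'$, I would pass to one to which no further operation applies; for that configuration $W \cong H'^{(2)}$, while the degree bounds encoded by $\md$ still hold since $\deg_G \le \md$ on $H'$.

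Next I would set up the deletion--recoloring. Let $S$ be the set of non-precolored vertices of $H'$. Since $G-S$ is a subgraph of $G$ with strictly fewer non-precolored vertices, minimality provides an injective $L$-coloring $\varphi$ of $G-S$ (this already colors the precolored vertices of $H'$ with their prescribed colors, consistently because $\mathcal{P}$ is proper). For each $v\in S$ I would form the reduced list
\[
L'(v)=L(v)\setminus\{\varphi(u): u\in N_{G^{(2)}}(v)\setminus V(H')\}.
\]
Each external $G^{(2)}$-neighbor of $v$ forbids at most one color, so $|L'(v)|$ is at least $|L(v)|$ minus the number of such neighbors; the girth, the bound $\Delta\le3$, and Claim~\ref{claim1} (which controls how nearby $2$-vertices can contribute) make this count match exactly the list size drawn at $v$ in Figures~\ref{fig-2x}--\ref{fig-10x}, read off from $\md$ and the incident dashed half-edges. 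With this in place, extending $\varphi$ to an injective $L$-coloring of $G$ is exactly the problem of list-coloring the graph $W\cong H'^{(2)}$ from the lists $L'$, with the precolored vertices treated as fixed.

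The bulk of the work is then to certify that this list-coloring problem is solvable for every non-exceptional configuration, using only the list-size lower bounds from the figures. I would handle each configuration by exhibiting one of three certificates: a greedy (\greedy) ordering of $S$ in which each vertex has more available colors than already-colored neighbors in $W$; an Alon--Tarsi (\alon) orientation of $W[S]$ whose counts of even and odd spanning Eulerian subdigraphs differ and whose out-degrees stay below the corresponding list sizes; or a brute-force (\brute) search, which is finite once one notes it suffices to test list assignments from a bounded ground set. Any successful certificate yields the extension and hence an injective $L$-coloring of $G$, contradicting the choice of $G$. By design, the configurations for which no certificate exists are exactly those containing $X_1$ or $X_2$, i.e. the members of $\mathcal{E}$, which is why they must be excluded from the statement.

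The main obstacle is the scale and bookkeeping rather than any one idea. After the identifications and the two enlarging operations, $\mathcal{A}$ contains a large number of configurations, and for each one must correctly determine the $G^{(2)}$-adjacencies on $V(H')$, justify the list-size label at every vertex, and either produce a working certificate or recognize an exception. The most delicate points are getting the reduced list sizes right near the outer face and the precolored paths, where the interplay of $I(f)$, $N(f)$, and Claim~\ref{claim1} must be tracked carefully, and cleanly separating the genuinely reducible configurations from the two exceptions $X_1$ and $X_2$.
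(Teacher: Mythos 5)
Your proposal matches the paper's proof essentially step for step: take the appearing configuration maximal in $\mathcal{A}$ so that the subgraph of $G^{(2)}$ induced on $V(H')$ is exactly $H'^{(2)}$, injectively color $G-H'$ by minimality, prune each list by the colors of external $G^{(2)}$-neighbors, and certify list-colorability of every non-exceptional configuration from the depicted list sizes alone via greedy or Alon--Tarsi certificates (the paper carries this out with a Sage program; your brute-force fallback is not needed there). The one point you leave tacit is that, by construction of $\mathcal{A}$, every vertex of $H'$ has at most one neighbor in $G-H'$, which is what simultaneously guarantees $(G-H')^{(2)} = G^{(2)}-H'$ (so $\varphi$ stays valid among the outside vertices when the extension is added) and justifies the list-size labels in Figures~\ref{fig-2x}--\ref{fig-10x}.
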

\begin{proof}
Suppose for contradiction that some $(H,\md)\in \mathcal{A}$ appears in $G$.
Let $H'$ be the isomorphic copy of $H$ in $G$.
Assume that $H$ is as large as possible. That means $H'$ is an induced subgraph of $G$
and no pair of vertices of $H'$ have a common neighbor in $G-H'$.

By the minimality of $G$, there exists an injective $L$-coloring $\varphi$ of $G-H'$.
Create a list assignment $L'$ from $L$ by removing the colors used on neighbors in $G^{(2)}-H'$, that is  for every $v \in V(H')$,
\[
L'(v) = L(v) \setminus \{ \varphi(x):  vx \in E(G^{(2)})\text{ and } x \in  V(G)\setminus V(H') \}.
\]
We depict $|L'(v)|$ for configurations in $\mathcal{B}$ in Figures~\ref{fig-2x}--\ref{fig-10x} by the shape of vertices.
Refer to Table~\ref{table:configkey} for shape meanings.

Using a computer program written in Sage, we verified that $H'$ has an injective $L'$-coloring $\varphi_{H'}$.
Notice $(G - H')^{(2)}$ is the same as $G^{(2)}-H'$ since every vertex in $H'$ has
at most one neighbor in $G-H'$.
Hence $\varphi_{H'}$ and $\varphi$ can be combined into an injective $L$-coloring of $G$, which is a contradiction.

The computer program verifies injective $L'$-colorability of $H'$ by a greedy coloring or by finding an Alon-Tarsi orientation (cf.~\cite{alon})
on the neighboring graph of $H'$. 
Both of these methods work only with list sizes and not with the actual contents of the lists.
The method used is denoted next to the labels in Figures~\ref{fig-2x}--\ref{fig-10x} by using AT for Alon-Tarsi and G for greedy.
The program we used can be obtained at \url{https://arxiv.org/abs/1611.03454}.
\end{proof}

Now we deal with the exceptions in $\mathcal{E}$.
Let $C_{2,2}^\star$ be the configuration depicted in Figure~\ref{fig-Bad}.

 \begin{claim}\label{cl:e}
If $(H,\md)\in \mathcal{E}$ and $(H,\md)$ appears in $G$, then it is $C_{2,2}^\star$.
\end{claim}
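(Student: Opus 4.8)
The plan is to reduce the statement to the two seed configurations $X_1$ and $X_2$ and then dispatch each. By definition every $(H,\md)\in\mathcal{E}$ contains a copy of $X_1$ or of $X_2$, and the operations that build $\mathcal{A}$ attach edges or new vertices only to vertices that are not yet saturated and not precolored. Consequently the precolored vertices and the marked $2$-vertices of $X_1$ and $X_2$ persist unchanged in every such extension, so it suffices to derive the conclusion from the local structure of $X_1$ and $X_2$ together with the distinguished edges $e_1,e_2$ drawn in bold in Figure~\ref{fig-E}.

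For $X_2$ I would argue directly from Claim~\ref{claimprecolored}. Each of its two precolored vertices is a leaf whose only neighbor is a non-precolored endpoint of $e_1$ or $e_2$. A precolored leaf can be only the endpoint of a precolored path, and its single neighbor would then also have to be precolored; since that neighbor is not precolored, the leaf is forced to be an isolated precolored vertex, contradicting the part of Claim~\ref{claimprecolored} that says every precolored path is on three vertices. Because these leaves are saturated, the same contradiction survives in every extension of $X_2$ in $\mathcal{E}$, so no configuration containing $X_2$ appears in $G$.

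For $X_1$ the argument is structural and turns on the bold edges. Deleting $e_1$ and $e_2$ splits the configuration into two parts, only one of which carries its precolored vertices, and I would run a case analysis on how the optional external (dashed) edges of the precolor-free part are realized in $G$. If these edges are absent, two vertices of the configuration become $2$-vertices within distance three of one another, contradicting Claim~\ref{claim1}. If they are present and $G-\{e_1,e_2\}$ is disconnected with neither $e_i$ a bridge, then Claim~\ref{cl2edgeconnected} forces the precolor-free side to contain precolored vertices and places both $e_1$ and $e_2$ in the outer face; in the remaining subcases the external edges create cycles that either reintroduce a forbidden pair of nearby $2$-vertices or are excluded by girth $6$. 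Combining these constraints with the hypothesis that $\mathcal{P}$ consists of at most two precolored paths, each on at most three vertices and all in one face, leaves a single admissible arrangement, which is exactly the configuration $C_{2,2}^\star$ of Figure~\ref{fig-Bad}; this is why $C_{2,2}^\star$ must be singled out rather than excluded.

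The main obstacle is the $X_1$ case analysis: for every realization of the dashed edges one must show that either a structural claim (Claim~\ref{claim1}, Claim~\ref{cl:bridge}, or Claim~\ref{cl2edgeconnected}) is violated or the precoloring hypotheses collapse the configuration onto $C_{2,2}^\star$. Extra care is needed in the subcases where some $e_i$ is actually a bridge, so that Claim~\ref{cl:bridge} applies in place of Claim~\ref{cl2edgeconnected}, and where $e_1$ and $e_2$ share an endpoint, since the connectivity claims are stated only for edges with distinct endpoints.
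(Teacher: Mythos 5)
There is a genuine gap, and it is structural: you have the roles of $X_1$ and $X_2$ exactly reversed. The configuration $C_{2,2}^\star$ is an extension of $X_2$, not of $X_1$ --- the caption of Figure~\ref{fig-Bad} says explicitly that $C_{2,2}^\star$ is drawn so as to show its correspondence with $X_2$ --- so no case analysis on the dashed edges of $X_1$ can ``collapse'' $X_1$ onto $C_{2,2}^\star$; the two graphs are simply different ($X_1$ contains an $8$-face, $C_{2,2}^\star$ does not). Your exclusion of $X_2$ fails for two reasons. First, the vertices drawn with list size $1$ in Figure~\ref{fig-E} need not be precolored: the shapes record $|L'(v)|$, the number of colors surviving after the neighbors in $G^{(2)}$ outside $H'$ are colored, so a size-$1$ vertex may just be a vertex of $G-\mathcal{P}$ that lost four colors. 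Second, even when those vertices \emph{are} precolored, your inference that such a vertex is a leaf of $G$ whose ``single neighbor would then also have to be precolored'' is wrong: $\md$ permits degree $3$, in which case the vertex is the \emph{middle} of a precolored path on three vertices, with two precolored path-neighbors and exactly one neighbor outside the path (the non-precolored bridge endpoint) --- fully consistent with Claim~\ref{claimprecolored}, and precisely what happens in $C_{2,2}^\star$. Note, too, that if your conclusion ``no configuration containing $X_2$ appears in $G$'' were correct, then no exception would appear at all, making the claim vacuous and rule (R5) with its bad vertices pointless; the entire point of the claim is that $C_{2,2}^\star$ \emph{cannot} be excluded and must be absorbed by the discharging.

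The paper's actual argument is short and uniform. In each of $X_1$ and $X_2$, removing the thick edges $e_1,e_2$ disconnects $G$, with one component lying inside the configuration and containing no precolored vertices. If neither $e_i$ is a bridge, this contradicts Claim~\ref{cl2edgeconnected} outright, killing that case for both exceptions (both embeddings of $X_1$ contain such a pair $e_1,e_2$). If some $e_i$ is a bridge, then so is the other, and by Claim~\ref{cl:bridge} each must have a precolored endpoint; in $X_1$ every endpoint of $e_1$ and $e_2$ is depicted with list of size at least $2$, hence $|L(v)|\geq |L'(v)|\geq 2$ and none of them can lie in $\mathcal{P}$, so $X_1$ is eliminated. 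Only $X_2$ survives, with its two size-$1$ endpoints precolored and the precolored paths attached through the bridges, which yields exactly $C_{2,2}^\star$. Your instinct to deploy Claims~\ref{claim1}, \ref{cl:bridge}, and \ref{cl2edgeconnected}, and your attention to the bridge subcase, point in the right direction, but the dispatching must run the other way: $X_1$ is the configuration that dies completely, and $X_2$ is the one that produces, rather than excludes, $C_{2,2}^\star$.
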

\begin{proof}
We use Claim~\ref{cl2edgeconnected} to show that
if any of the configurations $X_1$ and $X_2$ appear in $G$, then it must be $C_{2,2}^\star$.
The thick edges in Figure~\ref{fig-E} are edges $e_1$ and $e_2$ 
in Claim~\ref{cl2edgeconnected}.
Notice that $G-\{e_1,e_2\}$ is disconnected.

If neither $e_1$ nor $e_2$ is a bridge, then only one connected component contains precolored vertices
and the vertices of $e_1$ and $e_2$ are not precolored.
Also, notice that $X_1$ can be embedded in two different ways that are suggested by the dotted edges, but both cases contain the desired edges $e_1$ and $e_2$.

Suppose one of $e_1$ or $e_2$ is a bridge.
By Claim~\ref{cl:bridge}, the other is also a bridge and both are incident to
precolored vertices. This can happen only in $X_2$ and we obtain configuration $C_{2,2}^\star$.
\end{proof}

\begin{figure}[h!]
\[
\begin{array}{cc}
\vc{
\begin{tikzpicture}[scale=2]  
\node at (0.0,0.0) [listsize5,label=below left:$u$](x0){};
\node at (-0.35,0) [listsize5](x1){};
\node at (-.5,-0.5) [listsize4](x2){};
\node at (-0.35,-1) [listsize4](x3){};
\node at (0.0,-1) [listsize5](x4){};
\node at (0.0,-0.5) [listsize5,label=left:$v$](x5){};
\node at (0.35,0) [listsize5](x6){};
\node at (0.5,-0.5) [listsize4](x7){};
\node at (0.35,-1) [listsize4](x8){};
\node at (0,0.7) [listsize3](x15){};
\node at (-0.5,0.3) [listsize1](x9){};
\node at (0.5,0.3) [listsize1](x12){};

\node at (-0.35,-1.35) [listsize2](x20){};
\node at (0,-1.5) [listsize1](x21){};
\node at (0.35,-1.35) [listsize2](x22){};

\draw[realedges]  (x3)--(x20)  (x20)--(x21) (x21)--(x22) (x22)--(x8);
\draw[externaledges]  (x20)-- ++(270:0.25) (x21)-- ++(270:0.25)  (x22)-- ++(270:0.25);

\coordinate (x10) at (0,0.9);
\coordinate (x11) at (-0.6,-1);
\coordinate (x13) at (.75,-.5);
\coordinate (x14) at (.6,-1);
\draw (x5) node[degree2]{};
\draw[realedges]  (x0)--(x1) (x0)--(x5) (x0)--(x6) (x1)--(x2) (x2)--(x3) (x3)--(x4) (x4)--(x5) (x4)--(x8) (x6)--(x7) (x7)--(x8);
\draw[externaledges]   (x15)--(x10);
\draw[realedges] (x7) to[out=60,in=0,looseness=2] (x15) (x2) to[out=120,in=180,looseness=2] (x15); 
\draw[line width=1pt] 
 (x6)--node[pos=0.5,label=right:$e_2$]{}(x12)
 (x12)  -- ++(45:0.2) node[listsize1]{}
 (x12)  -- ++(135:0.2) node[listsize1]{}
 (x9)  -- ++(45:0.2) node[listsize1]{}
 (x9)  -- ++(135:0.2) node[listsize1]{}
 (x1)--node[pos=0.5,label=left:$e_1$]{}(x9);
\node at (-.25,-.7){ 6};  
\node at (.25,-.7){ 6};    
\node at (0,0.35){$F_o$};    
\node at (0,-1.25){6};  
\end{tikzpicture}
}
\\[10pt]
C_{2,2}^\star
\end{array}
\]
\caption{A non-reducible configuration $C_{2,2}^\star$ obtained from $C_{2,2}$.
Black vertices are in  $\mathcal{P}$ and the white vertices are in $G - \mathcal{P}$.
The vertex $v$ is a bad vertex and $F_o$ is the outer face.
The face $F_o$ is not actually drawn as the outer face in order to show correspondence with $X_2$ in 
Figure~\ref{fig-E}.
}\label{fig-Bad}
\end{figure}
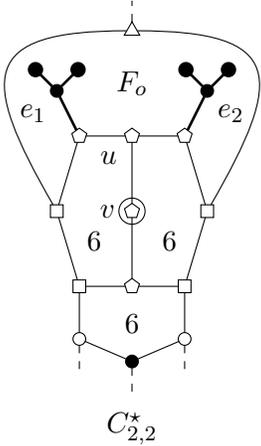


\subsection{Discharging Argument}

Recall that $G$ is a minimum counterexample to Theorem~\ref{thm:ICN:realresult}.
Hence $G$ is a plane graph with maximum degree at most 3 and girth at least 6.
In Section~\ref{secreducible} we showed that $G$ cannot contain any of the reducible configurations listed in Figures~\ref{fig-2x}--\ref{fig-10x}.
Moreover, $G$ contains at most two precolored paths, whose vertices are denoted by $\mathcal{P}$.  Recall $|\mathcal{P}|\leq6$.
Also, all precolored vertices are part of the outer face $F_o$.

For each vertex $v \in V(G) - \mathcal{P}$ and each face $f \in F(G) - \{F_o\}$, 
define initial charges $\mu_0 (v) = 2 \mathrm{deg} (v) - 6$ and $\mu_0 (f) = \ell (f) - 6$.
For precolored vertices $v$, define $\mu_0(v) = 0$ and for $F_o$ define $\mu_0 (F_o) = \ell (F_o) - 5 - |\mathcal{P}|$.
Let $\mathcal{P}_1$ be the precolored vertices of degree one. Recall that $|\mathcal{P}_1| = \frac{2}{3}|\mathcal{P}| \leq 4$.
By using Euler's formula~\eqref{eqeuler}
and
\[
\sum_{v \in V(G) } \deg(v) = 2|E| = \sum_{f \in F(G)} \ell(f),
\]
 we show that the sum of all charges is negative.
\begin{align}
|V(G)| + |F(G)|  &= |E(G)| + 2 \label{eqeuler}\\
4|E(G)|-6|V(G)| + 2|E(G)|-6|F(G)|  &= - 12 \nonumber\\
\sum_{v \in V(G)}(2\deg(v)-6) + \sum_{f \in F(G)}(\ell(f) - 6)  &= -12 \nonumber\\
\sum_{v \in V(G)}\mu_0(v) + \sum_{f \in F(G)} \mu_0(f)   -4|\mathcal{P}_1| + |\mathcal{P}|-1 &= -12 \nonumber\\
\sum_{v \in V(G)}\mu_0(v) + \sum_{f \in F(G)} \mu_0(f)  &\leq -1 \label{eqneg}
\end{align}
We sequentially  apply the following five discharging rules; see Figure~\ref{figRules} for an illustration.

\begin{flushleft} 
Let $v$ be a 2-vertex and let $f_1$ and $f_2$ be the faces incident to $v$.
\end{flushleft}
\begin{description}
	 
		\item{(R1)} If $\ell (f_1) = 6$ and $\ell (f_2) \geq 8$, $v$ pulls charge 2 from $f_2$.
		\item{(R2)} If $\ell (f_1) \geq 7$ and $\ell (f_2) \geq 7$, $v$ pulls charge 1 from $f_1$ and from $f_2$.
		\item{(R3)} If $\ell (f_1) = 6$ and $\ell (f_2) = 7$, $v$ pulls charge 1 from $f_2$.

\end{description}
\begin{figure}[h!]
\footnotesize
\centering
\stackunder[5pt]{\begin{tikzpicture}
\draw (0,0) -- (2,0) -- (2,2) -- (0,2) -- (0,0);
\draw (0,0) -- (2,0) -- (2,-2) -- (0,-2) -- (0,0);
\draw (1,-.2) node[anchor=south] {\textbullet};
\node at (1,1) {\stackanchor{$f_1$}{$\ell(f_1) = 6$}};
\node at (1,-1.5) {\stackanchor{$f_2$}{$\ell(f_2) \geq 8$}};
\draw[->,line width=0.3mm, black] (1,-1)--(1,-.05) node [pos=0.5, below, black, sloped] {+2};
\end{tikzpicture}}{(R1)}%
\hspace{1cm}%
\stackunder[5pt]{\begin{tikzpicture}
\draw (0,0) -- (2,0) -- (2,2) -- (0,2) -- (0,0);
\draw (0,0) -- (2,0) -- (2,-2) -- (0,-2) -- (0,0);
\draw (1,-.2) node[anchor=south] {\textbullet};
\node at (1,1.5) {\stackanchor{$f_1$}{$\ell(f_1) \geq 7$}};
\node at (1,-1.5) {\stackanchor{$f_2$}{$\ell(f_2) \geq 7$}};
\draw[->, line width=0.3mm, black] (1,1)--(1,.05) node [pos=0.5, above, black, sloped] {+1};
\draw[->,line width=0.3mm, black] (1,-1)--(1,-.05) node [pos=0.5, below, black, sloped] {+1};
\end{tikzpicture}}{(R2)}
\hspace{1cm}%
\stackunder[5pt]{\begin{tikzpicture}
\draw (0,0) -- (2,0) -- (2,2) -- (0,2) -- (0,0);
\draw (0,0) -- (2,0) -- (2,-2) -- (0,-2) -- (0,0);
\draw (1,-.2) node[anchor=south] {\textbullet};
\node at (1,1) {\stackanchor{$f_1$}{$\ell(f_1) = 6$}};
\node at (1,-1.5) {\stackanchor{$f_2$}{$\ell(f_2) =7$}};
\draw[->,line width=0.3mm, black] (1,-1)--(1,-.05) node [pos=0.5, below, black, sloped] {+1};
\end{tikzpicture}}{(R3)} \\ \vspace{0.5cm}
\stackunder[5pt]{\begin{tikzpicture}
\draw (0,0) -- (2,0) -- (2,2) -- (0,2) -- (0,0);
\draw (0,0) -- (2,0) -- (2,-2) -- (0,-2) -- (0,0);
\draw (0,-2) -- (-2,-2) -- (-2,2)--(0,2);
\draw (2,-2) -- (4,-2) -- (4,2)--(2,2);
\draw (1,-.2) node[anchor=south] {\textbullet};
\draw (0,-.2) node[anchor=south] {\textbullet};
\node at (-0.2,.25) {$v_1$};
\draw (2,-.2) node[anchor=south] {\textbullet};
\node at (2.25,0) {$v_2$};
\node at (1,1)  {\stackanchor{$f_1$}{$\ell(f_1) = 6$}};
\node at (1,-1.5)  {\stackanchor{$f_2$}{$\ell(f_2) = 7$}};
\node at (-1,0) {\stackanchor{$f_3$}{$\ell(f_3) \geq 7$}};
\node at (3,0) {$ $};
\node at (1,.3) {{\fontsize{3mm}{3mm}\selectfont $\mu_{3}(v) = -1$}};
\draw[->,line width=0.3mm, black] (-1,-1)--(1,0) node [pos=0.2, below, black, sloped] {+1};
\end{tikzpicture}}{(R4)}
\hspace{1cm}%
\stackunder[5pt]{\begin{tikzpicture}
\draw (0,0) -- (2,0) -- (2,2) -- (0,2) -- (0,0);
\draw (0,0) -- (2,0) -- (2,-2) -- (0,-2) -- (0,0);
\draw (0,-2) -- (-2,-2) -- (-2,2)--(0,2);
\draw (2,-2) -- (4,-2) -- (4,2)--(2,2);
\draw (1,-.2) node[anchor=south] {\textbullet};
\draw (0,-.2) node[anchor=south] {\textbullet};
\node at (-0.2,.25) {$v_1$};
\draw (2,-.2) node[anchor=south] {\textbullet};
\node at (2.25,.25) {$v_2$};
\node at (1,1)  {\stackanchor{$f_1$}{$\ell(f_1) = 6$}};
\node at (1,-1.5)  {\stackanchor{$f_2$}{$\ell(f_2) = 6$}};
\node at (-1,0) {\stackanchor{$f_3$}{$\ell(f_3) \geq 7$}};
\node at (3,0) {\stackanchor{$f_4$}{$\ell(f_4) \geq 7$}};
\node at (1,.3) {{\fontsize{3mm}{3mm}\selectfont $\mu_{3}(v) = -2$}};
\draw[->,line width=0.3mm, black] (-1,-1)--(1,0) node [pos=0.2, below, black, sloped] {+1};
\draw[->,line width=0.3mm, black] (3,-1)--(1,0) node [pos=0.2, below, black, sloped] {+1};
\end{tikzpicture}}{(R4)}

\caption{Rules (R1)--(R4).}\label{figRules}
\end{figure}
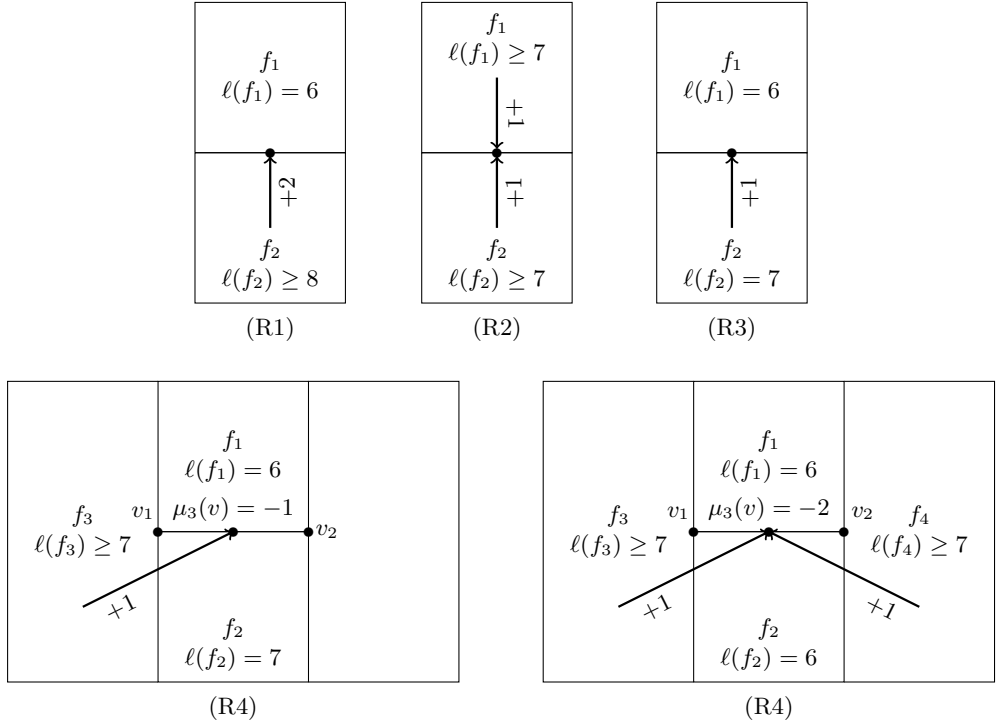

Let $\mu_i$ be the charge after applying rules (R1) to (R$i$) for $i \in \{3,4,5\}$.
Let $f_3$ be a face such that $v$ (defined above) is a nearby vertex of $f_3$. Recall $F_o$ is the outer face.
	
\begin{description}

		\item{(R4)} { If $\mu_{3} (v) < 0$ and $\ell(f_3) \geq 7$, $v$ pulls charge 1 from $f_3$.}

		\item{(R5)} { If $\mu_{4} (v) < 0$, $v$ pulls charge 2 from $F_o$.}

\end{description}

Notice rules (R4) and (R5) apply only if $v$ is incident to at least one 6-face. 
A 2-vertex $u$ is \emph{needy} if $\mu_{3}(u) < 0$.
A 2-vertex $u$ is \emph{bad} if $\mu_{4}(u) < 0$.

Now we  show that all vertices and faces in the minimal counterexample have nonnegative final charge $\mu_5$, providing our contradiction with \eqref{eqneg}. 
Note that 3-vertices and 6-faces begin with charge 0 and never lose any charge due to the discharging rules, thus they end with nonnegative charge. 
By Claim~\ref{claimprecolored}, none of the precolored vertices have degree 2.
Since precolored vertices begin with charge 0 and they are not affected by any of the discharging rules, their final charge is 0.

By Claim~\ref{cl:leaf}, the minimum vertex degree of $G-\mathcal{P}$ is 2.

\begin{figure}[h]
\[
\begin{array}{cc}
\begin{tikzpicture}[scale=2]  
\node at (0.0,0.0) [listsize5](x0){};
\node at (0.1,-0.22) [listsize3](x1){};
\node at (.5,-0.2) [listsize3](x2){};
\node at (0.55,0.15) [listsize3](x3){};
\node at (0.37,0.33) [listsize5](x4){};
\node at (0.18,0.16) [listsize5](x5){};
\node at (-0.22,0.13) [listsize3](x6){};
\node at (-.19,0.43) [listsize2](x7){};
\node at (.04,0.61) [listsize2](x8){};
\node at (0.32,0.57) [listsize3](x9){};
\node at (0.81,0.28) [listsize2](x10){};
\node at (0.81,0.6) [listsize1](x11){};
\node at (0.55,0.75) [listsize2](x12){};
\node at (-0.05,-0.45) [listsize2](x13){};
\node at (-0.35,-0.34) [listsize1](x14){};
\node at (-0.45,-0.05) [listsize2](x15){};
\coordinate (x16) at (0.7,-0.35);
\coordinate (x17) at (-0.35,0.55);
\coordinate (x18) at (-0.03,0.8);
\coordinate (x19) at (1.0,0.22);
\coordinate (x20) at (1.0,0.7);
\coordinate (x21) at (0.55,1);
\coordinate (x22) at (0,-0.65);
\coordinate (x23) at (-0.49,-0.5);
\coordinate (x24) at (-0.65,-0.05);
\draw (x5) node[degree2]{};
\draw[realedges]  (x0)--(x1) (x0)--(x5) (x0)--(x6) (x1)--(x2)
(x1)--(x13) (x2)--(x3) (x3)--(x4) (x3)--(x10) (x4)--(x5) (x4)--(x9)
(x6)--(x7) (x6)--(x15) (x7)--(x8) (x8)--(x9) (x9)--(x12) (x10)--(x11)
(x11)--(x12) (x13)--(x14) (x14)--(x15);
\draw[externaledges]  (x2)--(x16) (x7)--(x17) (x8)--(x18) (x10)--(x19)
(x11)--(x20) (x12)--(x21) (x13)--(x22) (x14)--(x23) (x15)--(x24);
\node at (0.25,0.33)[label=left:7]{}; 
\node at (.35,0){6}; 
\node at (.6,.45){6}; 
\node at (-.155,-.155){6};   
\end{tikzpicture}
&
\begin{tikzpicture}[scale=2]  
\node at (0.0,0.0) [listsize5](x0){};
\node at (-0.35,0) [listsize3](x1){};
\node at (-.5,-0.5) [listsize2](x2){};
\node at (-0.35,-1) [listsize2](x3){};
\node at (0.0,-1) [listsize3](x4){};
\node at (0.0,-0.5) [listsize5](x5){};
\node at (0.35,0) [listsize3](x6){};
\node at (0.5,-0.5) [listsize2](x7){};
\node at (0.35,-1) [listsize2](x8){};
\node at (-0.35,0.35) [listsize2](x9){};
\node at (0,0.5) [listsize1](x10){};
\node at (0.35,0.35) [listsize2](x11){};
\coordinate (x12) at (-0.75,-.5);
\coordinate (x13) at (-0.6,-1);
\coordinate (x14) at (.75,-.5);
\coordinate (x15) at (.6,-1);
\coordinate (x16) at (-0.35,0.6);
\coordinate (x17) at (0,0.75);
\coordinate (x18) at (0.35,0.6);
\draw (x5) node[degree2]{};
\draw[realedges]  (x0)--(x1) (x0)--(x5) (x0)--(x6) (x1)--(x2) (x2)--(x3) (x3)--(x4) (x4)--(x5);
\draw[realedges]  (x4)--(x8) (x6)--(x7) (x7)--(x8) (x1)--(x9) (x9)--(x10) (x10)--(x11) (x11)--(x6);
\draw[externaledges]  (x2)--(x12) (x3)--(x13) (x7)--(x14) (x8)--(x15) (x9)--(x16) (x10)--(x17) (x11)--(x18);
\node at (0.25,-.5){6};
\node at (-0.25,-.5){6}; 
\node at (0,.25){6};  
\end{tikzpicture}
\\
C_{2,1}, \hspace{3px} $\alon$  & C_{2,2}, \hspace{3px} $\greedy$  \\
\end{array}
\]
\caption{Configurations around a 2-vertex, where $C_{2,1}$ is reducible and $C_{2,2}$ is reducible except for the special case  $C_{2,2}^\star$ depicted in Figure~\ref{fig-Bad}.}\label{fig-2x}
\end{figure}
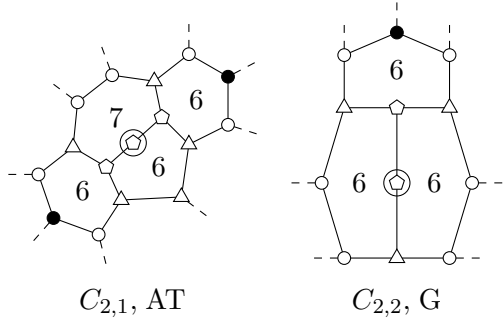

Let $v$ be a 2-vertex with neighbors $v_1$ and $v_2$.
Note the reducible configurations in Figure~\ref{fig-2x}.
The minimum distance between 2-vertices must be at least 4 by Claim \ref{claim1}. 
Hence $v_1$ and $v_2$ must be 3-vertices.  
Let $f_1$ and $f_2$ be the faces incident to $v$. 
Since $v$ begins with charge $\mu_0 (v) = -2$, we must show that it receives charge at least 2 during discharging.  
If $\ell (f_1) \geq 7$ and $\ell (f_2) \geq 7$, then $v$ receives charge at least 1 from each face by (R2), resulting in $\mu_3(v)=\mu_5(v)=0$. 
Assume then that $\ell (f_1) = 6$. 
\begin{itemize}
\item If $\ell (f_2) \geq 8$, then $v$ receives charge 2 from $f_2$ by (R1), resulting in $\mu_3(v)=\mu_5(v)=0$. 
\item If $\ell (f_2) = 7$, then $v$ receives charge 1 from $f_2$ by (R3). This gives $\mu_3(v) = -1$
and $v$ is a needy vertex.
Let $f_3$ and $f_4$ be the faces incident to $v_1$ and $v_2$, respectively, that are not incident to $v$. 
If $\max\{\ell(f_3),\ell(f_4)\} \geq 7$, then $v$ receives an additional 1 charge by (R4). This results in $\mu_5(v) \geq 0$.
The case $\ell (f_3) = \ell (f_4) = 6$ cannot happen, otherwise $G$ would contain the reducible configuration $C_{2,1}$.  
\item  If $\ell (f_2) = 6$, then $\mu_3(v) = -2$.  Let $f_3$ and $f_4$ be the faces incident to $v_1$ and $v_2$, respectively, that are not incident to $v$.
If $\ell (f_3) \geq 7$ and $\ell (f_4) \geq 7$, then $v$ receives charge 1 from each of $f_3$ and $f_4$.
If $\ell (f_3) = 6$ or $\ell (f_4) = 6$, $G$ cannot contain $C_{2,2}$ and hence must contain $C_{2,2}^{\star}$ from Figure~\ref{fig-Bad} by Claim~\ref{cl:e}.
In this case, $v$ is a bad vertex and will thus receive charge 2 from $F_o$.
Observe that there is at most one bad vertex $v$ in $G$.
Indeed, a neighbor $u$ of a bad vertex is in $F_o$ and both neighbors of $u$ in $F_o$ have neighbors in $\mathcal{P}$. 
Since $G$ has girth $6$, there can be at most one such vertex $u$; see Figure~\ref{fig-Bad}.
\end{itemize}
Hence, every 2-vertex has a nonnegative final charge. 

Let $f$ be a face.  Due to the minimum distance between 2-vertices, $f$ can be incident to at most $\left\lfloor \frac{\ell(f)}4 \right\rfloor$ 2-vertices.   The minimum distance restriction also limits the number of 2-vertices that are nearby $f$, as shown in Table~\ref{table:IN}.  Note that this table only takes the minimum distance restriction into consideration and not any other reducible configurations.  For each of the possible configurations in the table, we argue that $f$ will either be part of a reducible configuration or have nonnegative final charge. The case $\ell (f) \geq 11$ will be handled separately.

If $|\mathcal{P}| \geq 1$, then $\ell(F_o) \geq 12$ and $F_o$ is included in the case $\ell (f) \geq 11$.
If there are no precolored vertices, $F_o$ is treated as any other face of $G$.

\begin{table}
\begin{center}
	\begin{tabular}{|c|c|c|c|c|} \hline
		$\ell (f)$ & 7 & 8 & 9 & 10\\ \hline
		$\left|I(f)\right|$ & \, 1 \; 0 \, & \, 2 \; 1 \; 0 \, & \, 2 \; 1 \; 0 \, & \, 2 \; 1 \; 0 \,\\ \hline
		$\max \left|N(f)\right|$ & \, 1 \; 3 \, & \, 0 \; 2 \; 4 \, & \, 0 \; 2 \; 4 \, & \, 1 \; 3 \; 5 \,\\ \hline
	\end{tabular}
\end{center}
\caption{Maximum number of nearby vertices of $f$ when $\ell(f)$ and $|I(f)|$ are fixed.}\label{table:IN}
\end{table}

We distinguish the case based on $\ell(f)$.
We also include charts to summarize the results based on the length of the face. Each cell will contain the argument for why the configuration is reducible, has a nonnegative final charge, denoted EC, or fails to meet the distance requirement, denoted DR.

\begin{itemize}

\item 
 Suppose that $\ell (f) = 6$.
 We have $\mu_0 (f) = 0 = \mu_5 (f)$ since $f$ does not send or receive any charge.
 Hence, if $\ell (f) = 6$, then $f$ has nonnegative final charge.

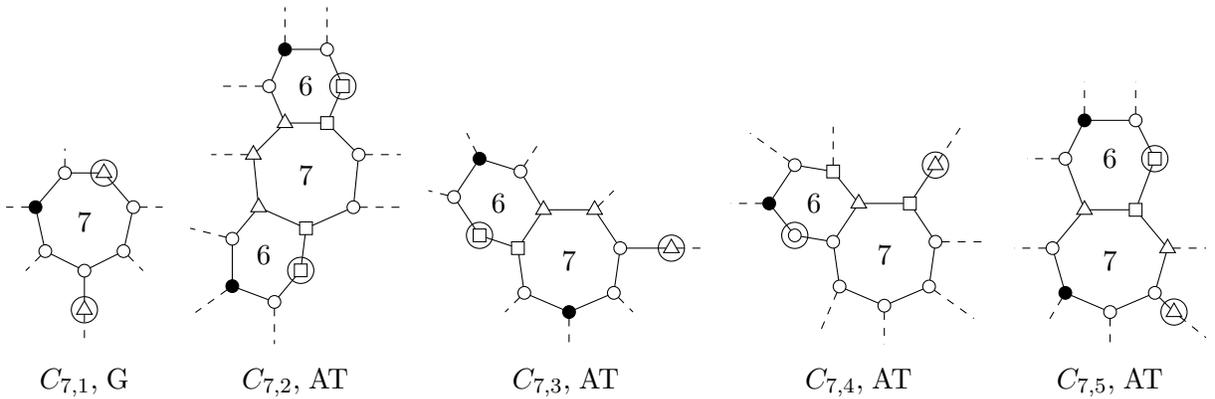
\begin{figure}[h]
\[
\begin{array}{ccccc}
\begin{tikzpicture}[scale=1.3]  
\node at (0.0,0.0) [listsize2](x0){};
\node at (0.4,0.2) [listsize2](x1){};
\node at (0.5,0.65) [listsize2](x2){};
\node at (0.2,1) [listsize3](x3){};
\node at (-0.2,1) [listsize2](x4){};
\node at (-.5,0.65) [listsize1](x5){};
\node at (-0.4,0.2) [listsize2](x6){};
\node at (-0,-0.4) [listsize3](x7){};
\coordinate (x8) at (0.6,0);
\coordinate (x9) at (.8,0.65);
\coordinate (x10) at (-0.2,1.25);
\coordinate (x11) at (-0.8,.65);
\coordinate (x12) at (-0.6,-0);
\coordinate (x13) at (0,-0.75);
\draw (x3) node[degree2]{} (x7) node[degree2]{};
\draw[realedges]  (x0)--(x1) (x0)--(x6) (x0)--(x7) (x1)--(x2) (x2)--(x3) (x3)--(x4) (x4)--(x5) (x5)--(x6);
\draw[externaledges]  (x1)--(x8) (x2)--(x9) (x4)--(x10) (x5)--(x11) (x6)--(x12) (x7)--(x13);
\node at (0.0,0.5){ 7 };   
\end{tikzpicture}
&
\begin{tikzpicture}[scale=1.4]  
\node at (0.1,0.0) [listsize4](x0){};
\node at (0.4,-0.3) [listsize2](x1){};
\node at (0.35,-0.8) [listsize2](x2){};
\node at (-.1,-1) [listsize4](x3){};
\node at (-0.55,-.8) [listsize3](x4){};
\node at (-0.6,-0.3) [listsize3](x5){};
\node at (-0.3,0) [listsize3](x6){};
\node at (-.15,-1.4) [listsize4](x7){};
\node at (.25,0.35) [listsize4](x8){};
\node at (.1,.7) [listsize2](x9){};
\node at (-.3,.7) [listsize1](x10){};
\node at (-0.45,0.35) [listsize2](x11){};
\node at (-0.8,-1.1) [listsize2](x12){};
\node at (-0.8,-1.55) [listsize1](x13){};
\node at (-0.4,-1.7) [listsize2](x14){};
\coordinate (x15) at (0.8,-0.3);
\coordinate (x16) at (0.8,-0.8);
\coordinate (x17) at (-1,-0.3);
\coordinate (x18) at (0.1,1.1);
\coordinate (x19) at (-0.3,1.1);
\coordinate (x20) at (-.89,0.35);
\coordinate (x21) at (-1.2,-1);
\coordinate (x22) at (-1.15,-1.8);
\coordinate (x23) at (-.4,-2.1);
\draw (x7) node[degree2]{} (x8) node[degree2]{};
\draw[realedges]  (x0)--(x1) (x0)--(x6) (x0)--(x8) (x1)--(x2) (x2)--(x3) (x3)--(x4) (x3)--(x7) (x4)--(x5) (x4)--(x12) (x5)--(x6) (x6)--(x11) (x7)--(x14) (x8)--(x9) (x9)--(x10) (x10)--(x11) (x12)--(x13) (x13)--(x14);
\draw[externaledges]  (x1)--(x15) (x2)--(x16) (x5)--(x17) (x9)--(x18) (x10)--(x19) (x11)--(x20) (x12)--(x21) (x13)--(x22) (x14)--(x23);
\node at (-0.1,-0.5){$7$}; 
\node at (-0.1,0.35){$6$};
\node at (-0.5,-1.25){$6$};  
\end{tikzpicture}
&
\begin{tikzpicture}[scale=1.7]  
\node at (0.0,0.0) [listsize1](x0){};
\node at (.35,0.15) [listsize2](x1){};
\node at (.4,0.5) [listsize2](x2){};
\node at (0.2,0.8) [listsize3](x3){};
\node at (-0.2,0.8) [listsize3](x4){};
\node at (-0.4,0.5) [listsize4](x5){};
\node at (-.35,.15) [listsize2](x6){};
\node at (.8,.5) [listsize3](x7){};
\node at (-0.7,0.6) [listsize4](x8){};
\node at (-.9,.9) [listsize2](x9){};
\node at (-0.7,1.2) [listsize1](x10){};
\node at (-0.38,1.1) [listsize2](x11){};
\coordinate (x12) at (0,-0.25);
\coordinate (x13) at (0.5,-0);
\coordinate (x14) at (0.35,.95);
\coordinate (x15) at (-.5,0);
\coordinate (x16) at (1.05,0.5);
\coordinate (x17) at (-1.1,.95);
\coordinate (x18) at (-.8,1.4);
\coordinate (x19) at (-.25,1.3);
\draw (x7) node[degree2]{} (x8) node[degree2]{};
\draw[realedges]  (x0)--(x1) (x0)--(x6) (x1)--(x2) (x2)--(x3) (x2)--(x7) (x3)--(x4) (x4)--(x5) (x4)--(x11) (x5)--(x6) (x5)--(x8) (x8)--(x9) (x9)--(x10) (x10)--(x11);
\draw[externaledges]  (x0)--(x12) (x1)--(x13) (x3)--(x14) (x6)--(x15) (x7)--(x16) (x9)--(x17) (x10)--(x18) (x11)--(x19);
\node at (-0,0.4){ 7 }; 
\node at (-.55,.85){6};   
\end{tikzpicture}
&
\begin{tikzpicture}[scale=1.7]  
\node at (0.2,0.8) [listsize4](x0){};
\node at (0.4,0.5) [listsize2](x1){};
\node at (0.35,0.15) [listsize2](x2){};
\node at (0,0) [listsize2](x3){};
\node at (-0.35,0.15) [listsize2](x4){};
\node at (-0.4,0.5) [listsize2](x5){};
\node at (-0.2,0.8) [listsize3](x6){};
\node at (0.4,1.1) [listsize3](x7){};
\node at (-0.4,1.05) [listsize4](x8){};
\node at (-0.7,1.1)[listsize2](x9){};
\node at (-0.9,.8) [listsize1](x10){};
\node at (-0.7,0.55) [listsize2](x11){};
\coordinate (x12) at (.8,0.5);
\coordinate (x13) at (0.7,-.1);
\coordinate (x14) at (0,-0.3);
\coordinate (x15) at (-0.5,-0.2);
\coordinate (x16) at (.6,1.4);
\coordinate (x17) at (-0.4,1.4);
\coordinate (x18) at (-1.05,1.35);
\coordinate (x19) at (-1.2,.8);
\draw (x7) node[degree2]{} (x11) node[degree2]{};
\draw[realedges]  (x0)--(x1) (x0)--(x6) (x0)--(x7) (x1)--(x2) (x2)--(x3) (x3)--(x4) (x4)--(x5) (x5)--(x6) (x5)--(x11) (x6)--(x8) (x8)--(x9) (x9)--(x10) (x10)--(x11);
\draw[externaledges]  (x1)--(x12) (x2)--(x13) (x3)--(x14) (x4)--(x15) (x7)--(x16) (x8)--(x17) (x9)--(x18) (x10)--(x19);
\node at (0,0.4){7 }; 
\node at (-.55,.8){6};   
\end{tikzpicture}
&
\begin{tikzpicture}[scale=1.7]  
\node at (0,0.0) [listsize2](x0){};
\node at (-0.35,0.15) [listsize1](x1){};
\node at (-0.45,0.5) [listsize2](x2){};
\node at (-0.2,0.8) [listsize3](x3){};
\node at (0.2,0.8) [listsize4](x4){};
\node at (.45,0.5) [listsize3](x5){};
\node at (0.35,0.15) [listsize2](x6){};
\node at (0.5,0) [listsize3](x7){};
\node at (0.35,1.2) [listsize4](x8){};
\node at (0.2,1.5) [listsize2](x9){};
\node at (-0.2,1.5) [listsize1](x10){};
\node at (-0.35,1.2) [listsize2](x11){};
\coordinate (x12) at (0.0,-0.25);
\coordinate (x13) at (-0.5,-0.1);
\coordinate (x14) at (-0.75,0.5);
\coordinate (x15) at (.75,0.5);
\coordinate (x16) at (0.75,-0.2);
\coordinate (x17) at (0.2,1.8);
\coordinate (x18) at (-0.2,1.8);
\coordinate (x19) at (-0.65,1.2);
\draw (x7) node[degree2]{} (x8) node[degree2]{};
\draw[realedges]  (x0)--(x1) (x0)--(x6) (x1)--(x2) (x2)--(x3) (x3)--(x4) (x3)--(x11) (x4)--(x5) (x4)--(x8) (x5)--(x6) (x6)--(x7) (x8)--(x9) (x9)--(x10) (x10)--(x11);
\draw[externaledges]  (x0)--(x12) (x1)--(x13) (x2)--(x14) (x5)--(x15) (x7)--(x16) (x9)--(x17) (x10)--(x18) (x11)--(x19);
\node at (0,0.4){ 7}; 
\node at (0,1.2){6};   
\end{tikzpicture}
\\
C_{7,1},\hspace{3px}  $\greedy$ & C_{7,2}, \hspace{3px} $\alon$  & C_{7,3}, \hspace{3px} $\alon$  & C_{7,4}, \hspace{3px} $\alon$  & C_{7,5}, \hspace{3px} $\alon$\\
\end{array}
\]
\caption{Reducible configurations around a 7-face.}\label{fig-7x}
\end{figure}
\item 
 Suppose that $\ell (f) = 7$; $\mu_0(f) = 1$,
 and note the reducible configurations in Figure~\ref{fig-7x}.
 Since $f$ begins with charge $\mu_0 (f) = 1$, we must show that it loses charge at most 1.  
 
 Suppose that $f$ is incident to one 2-vertex $v$.
 By configuration {$C_{7,1}$}, $f$ cannot have a nearby 2-vertex. 
 If $f$ has no nearby 2-vertices, then it only loses charge 1 to $v$ by (R2) or (R3), thus  $\mu_5(f) \geq 0$.
 
Suppose that $f$ is not incident to any 2-vertex.   
We show that at most one nearby 2-vertex of $f$  is needy.
Suppose for contradiction that $v_1$ and $v_2$ are two nearby vertices and both are needy.
Since they are needy, each must be in a 6-face. 
If they are distance 5 apart, $G$ contains {$C_{7,2}$} or {$C_{7,3}$}, which is a contradiction.
If they are distance 4 apart, $G$ contains {$C_{7,4}$} or {$C_{7,5}$}, which is a contradiction.
Hence $f$ is incident to at most one needy vertex.
Therefore, (R4) applies to $f$  at most once.
Hence, if $\ell (f) = 7$, then $f$ has nonnegative final charge.

 \begin{center}	
\begin{tabular}{|c|c|c|c|} \hline
\multicolumn{4}{|c|}{{\bf $\ell(f) = 7$}} \\
\hline
		$\left|I(f)\right| / \left|N(f)\right|$ & 0 & 1 & 2   \\
		 \hline
		0 & EC & EC & $C_{7,2},C_{7,3},C_{7,4},C_{7,5}$  \\ \hline
		1 & EC & $C_{7,1}$ & DR   \\ \hline 
	\end{tabular}
\end{center}

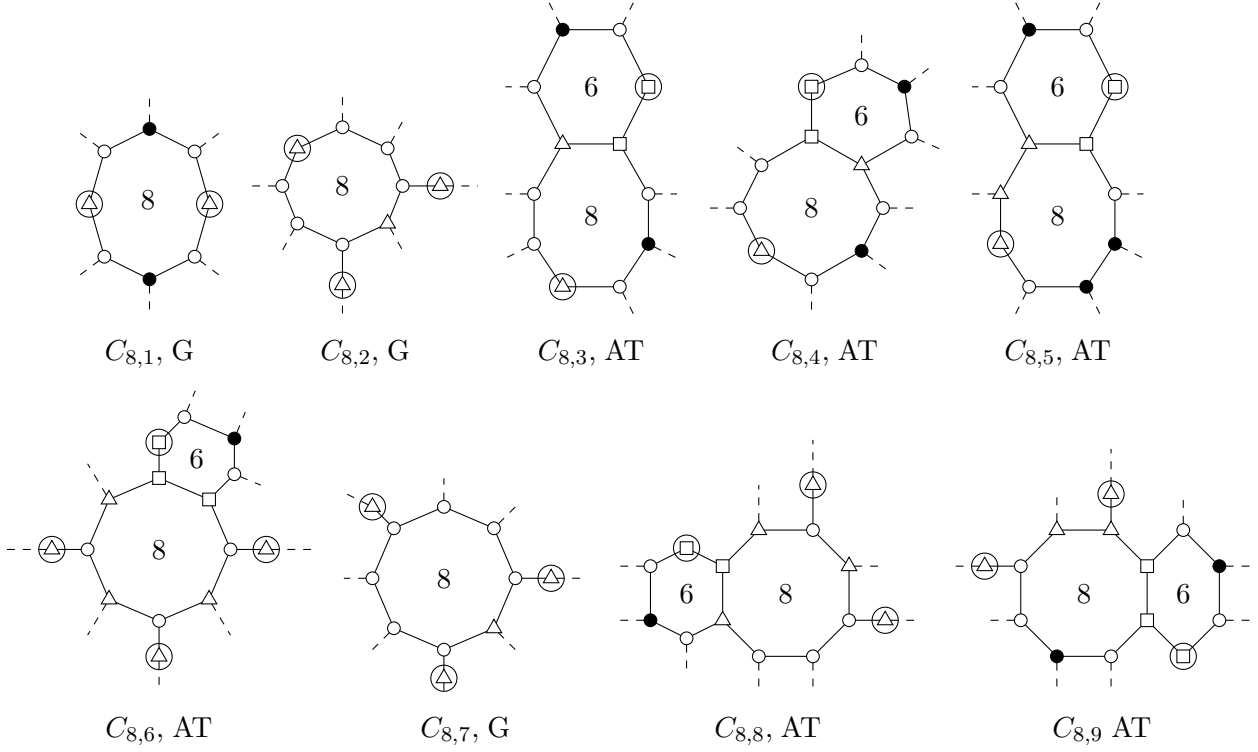
\begin{figure}
\[
\begin{array}{ccccc}	
\begin{tikzpicture}[scale=2]  
\node at (0,0) [listsize3](x0){};
\node at (0.1,-0.35) [listsize2](x1){};
\node at (0.4,-0.5) [listsize1](x2){};
\node at (0.7,-0.35) [listsize2](x3){};
\node at (0.8,0.0) [listsize3](x4){};
\node at (0.7,0.35) [listsize2](x5){};
\node at (0.4,0.5) [listsize1](x6){};
\node at (0.1,0.35) [listsize2](x7){};
\coordinate (x8) at (-0.1,-0.5);
\coordinate (x9) at (0.4,-0.74);
\coordinate (x10) at (0.9,-0.5);
\coordinate (x11) at (.9,0.5);
\coordinate (x12) at (0.4,0.74);
\coordinate (x13) at (-0.1,0.5);
\draw (x0) node[degree2]{} (x4) node[degree2]{};
\draw[realedges]  (x0)--(x1) (x0)--(x7) (x1)--(x2) (x2)--(x3) (x3)--(x4) (x4)--(x5) (x5)--(x6) (x6)--(x7);
\draw[externaledges]  (x1)--(x8) (x2)--(x9) (x3)--(x10) (x5)--(x11) (x6)--(x12) (x7)--(x13);
\node at (0.39,0.03){ 8 };   
\end{tikzpicture}
&
\begin{tikzpicture}[scale=2]  
\node at (0,0.0) [listsize2](x0){};
\node at (-0.1,0.25) [listsize2](x1){};
\node at (-0.4,0.39) [listsize2](x2){};
\node at (-0.7,0.25) [listsize3](x3){};
\node at (-0.8,0.0) [listsize2](x4){};
\node at (-0.7,-0.25) [listsize2](x5){};
\node at (-0.4,-0.39) [listsize2](x6){};
\node at (-0.1,-0.25) [listsize3](x7){};
\node at (0.25,-0.0) [listsize3](x8){};
\node at (-0.4,-0.66) [listsize3](x9){};
\coordinate (x10) at (-0.0,0.43);
\coordinate (x11) at (-0.4,0.6);
\coordinate (x12) at (-1.0,0);
\coordinate (x13) at (-0.8,-0.43);
\coordinate (x14) at (0.0,-0.43);
\coordinate (x15) at (0.5,-0.0);
\coordinate (x16) at (-0.4,-0.86);
\draw (x3) node[degree2]{} (x8) node[degree2]{} (x9) node[degree2]{};
\draw[realedges]  (x0)--(x1) (x0)--(x7) (x0)--(x8) (x1)--(x2) (x2)--(x3) (x3)--(x4) (x4)--(x5) (x5)--(x6) (x6)--(x7) (x6)--(x9);
\draw[externaledges]  (x1)--(x10) (x2)--(x11) (x4)--(x12) (x5)--(x13) (x7)--(x14) (x8)--(x15) (x9)--(x16);
\node at (-0.4,0.0){8 };   
\end{tikzpicture}
&

\begin{tikzpicture}[scale=1.9]  
\node at (0.0,0.0) [listsize4](x0){};
\node at (-0.4,0) [listsize3](x1){};
\node at (-0.6,-0.35) [listsize2](x2){};
\node at (-0.6,-0.7) [listsize2](x3){};
\node at (-0.4,-1) [listsize3](x4){};
\node at (-0.0,-1) [listsize2](x5){};
\node at (0.2,-0.7) [listsize1](x6){};
\node at (0.2,-0.35) [listsize2](x7){};
\node at (0.2,0.4) [listsize4](x8){};
\node at (0,0.8) [listsize2](x9){};
\node at (-0.4,0.8) [listsize1](x10){};
\node at (-0.6,0.4) [listsize2](x11){};
\coordinate (x12) at (-.8,-0.35);
\coordinate (x13) at (-0.8,-0.8);
\coordinate (x14) at (0.1,-1.2);
\coordinate (x15) at (0.4,-0.8);
\coordinate (x16) at (0.4,-0.35);
\coordinate (x17) at (0.1,1);
\coordinate (x18) at (-0.5,1);
\coordinate (x19) at (-0.8,0.4);
\draw (x4) node[degree2]{} (x8) node[degree2]{};
\draw[realedges]  (x0)--(x1) (x0)--(x7) (x0)--(x8) (x1)--(x2) (x1)--(x11) (x2)--(x3) (x3)--(x4) (x4)--(x5) (x5)--(x6) (x6)--(x7) (x8)--(x9) (x9)--(x10) (x10)--(x11);
\draw[externaledges]  (x2)--(x12) (x3)--(x13) (x5)--(x14) (x6)--(x15) (x7)--(x16) (x9)--(x17) (x10)--(x18) (x11)--(x19);
\node at (-0.2,-0.5){ 8 }; 
\node at (-0.2,.4){6};  
\end{tikzpicture}

&
\begin{tikzpicture}[scale=1.9]  
\node at (0.0,0.0) [listsize4](x0){};
\node at (0.35,-0.2) [listsize3](x1){};
\node at (0.5,-0.5) [listsize2](x2){};
\node at (0.35,-0.8) [listsize1](x3){};
\node at (0,-1) [listsize2](x4){};
\node at (-0.35,-0.8) [listsize3](x5){};
\node at (-0.5,-0.5) [listsize2](x6){};
\node at (-0.35,-0.2) [listsize2](x7){};
\node at (0,0.35) [listsize4](x8){};
\node at (0.35,0.5) [listsize2](x9){};
\node at (0.65,0.35) [listsize1](x10){};
\node at (0.7,-0) [listsize2](x11){};
\coordinate (x12) at (0.75,-0.5);
\coordinate (x13) at (0.55,-.95);
\coordinate (x14) at (-0,-1.25);
\coordinate (x15) at (-0.75,-0.5);
\coordinate (x16) at (-0.55,-.05);
\coordinate (x17) at (0.35,0.72);
\coordinate (x18) at (0.85,0.5);
\coordinate (x19) at (0.93,-0.1);
\draw (x5) node[degree2]{} (x8) node[degree2]{};
\draw[realedges]  (x0)--(x1) (x0)--(x7) (x0)--(x8) (x1)--(x2) (x1)--(x11) (x2)--(x3) (x3)--(x4) (x4)--(x5) (x5)--(x6) (x6)--(x7) (x8)--(x9) (x9)--(x10) (x10)--(x11);
\draw[externaledges]  (x2)--(x12) (x3)--(x13) (x4)--(x14) (x6)--(x15) (x7)--(x16) (x9)--(x17) (x10)--(x18) (x11)--(x19);
\node at (0.0,-0.5){ 8 };
\node at (0.35,.15){6};   
\end{tikzpicture}
&
\begin{tikzpicture}[scale=1.9]  
\node at (0.0,0.0) [listsize4](x0){};
\node at (-0.4,0) [listsize3](x1){};
\node at (-0.6,-0.35) [listsize3](x2){};
\node at (-0.6,-0.7) [listsize3](x3){};
\node at (-0.4,-1) [listsize2](x4){};
\node at (-0.0,-1) [listsize1](x5){};
\node at (0.2,-0.7) [listsize1](x6){};
\node at (0.2,-0.35) [listsize2](x7){};
\node at (0.2,0.4) [listsize4](x8){};
\node at (0,0.8) [listsize2](x9){};
\node at (-0.4,0.8) [listsize1](x10){};
\node at (-0.6,0.4) [listsize2](x11){};
\coordinate (x12) at (-.8,-0.35);
\coordinate (x13) at (-0.5,-1.2);
\coordinate (x14) at (0.1,-1.2);
\coordinate (x15) at (0.4,-0.8);
\coordinate (x16) at (0.4,-0.35);
\coordinate (x17) at (0.1,1);
\coordinate (x18) at (-0.5,1);
\coordinate (x19) at (-0.8,0.4);
\draw (x3) node[degree2]{} (x8) node[degree2]{};
\draw[realedges]  (x0)--(x1) (x0)--(x7) (x0)--(x8) (x1)--(x2) (x1)--(x11) (x2)--(x3) (x3)--(x4) (x4)--(x5) (x5)--(x6) (x6)--(x7) (x8)--(x9) (x9)--(x10) (x10)--(x11);
\draw[externaledges]  (x2)--(x12) (x4)--(x13) (x5)--(x14) (x6)--(x15) (x7)--(x16) (x9)--(x17) (x10)--(x18) (x11)--(x19);
\node at (-0.2,-0.5){ 8 }; 
\node at (-0.2,.4){6};  
\end{tikzpicture}
\\
C_{8,1}, \hspace{3px} $\greedy$  & C_{8,2},  \hspace{3px} $\greedy$  & C_{8,3},  \hspace{3px} $\alon$  & C_{8,4},  \hspace{3px} $\alon$ &C_{8,5},  \hspace{3px} $\alon$  \\
\end{array}
\]
\[
\begin{array}{cccc}
\begin{tikzpicture}[scale=1.9]  
\node at (0,0.0) [listsize4](x0){};
\node at (0.35,-0.15) [listsize4](x1){};
\node at (0.5,-0.5) [listsize2](x2){};
\node at (0.35,-0.85) [listsize3](x3){};
\node at (0,-1.0) [listsize2](x4){};
\node at (-0.35,-0.85) [listsize3](x5){};
\node at (-0.5,-0.5) [listsize2](x6){};
\node at (-0.35,-0.15) [listsize3](x7){};
\node at (0.75,-0.5) [listsize3](x8){};
\node at (0,-1.25) [listsize3](x9){};
\node at (-0.75,-0.5) [listsize3](x10){};

\draw[realedges]  (x1)  ++(45:0.25) node[listsize2](x20){}  ++ (90:0.25) node[listsize1](x21){};
\draw[realedges]  (x0)  ++(90:0.25) node[listsize4](x11){}  ++(45:0.25) node[listsize2](x22){}; 
\draw[externaledges]  (x20) -- ++(-22.5:0.2)  (x21) -- ++(67.5:0.2) (x22) -- ++(67.5:0.2) ;
\coordinate (x13) at (0.5,-1.1);
\coordinate (x14) at (-0.5,-1.1);
\coordinate (x15) at (-0.5,0.1);
\coordinate (x16) at (1.1,-0.5);
\coordinate (x17) at (0,-1.5);
\coordinate (x18) at (-1.1,-0.5);
\draw (x8) node[degree2]{} (x9) node[degree2]{} (x10) node[degree2]{} (x11) node[degree2]{};
\draw[realedges]  (x0)--(x1) (x0)--(x7)  (x1)--(x2) (x2)--(x3) (x2)--(x8) (x3)--(x4) (x4)--(x5) (x4)--(x9) (x5)--(x6) (x6)--(x7) (x6)--(x10) (x1)--(x20) (x20)--(x21) (x0)--(x11) (x11)--(x22) (x22)--(x21);
\draw[externaledges]  (x3)--(x13) (x5)--(x14) (x7)--(x15) (x8)--(x16) (x9)--(x17) (x10)--(x18);
\node at (0,-0.5){8};   
\node at ($(x11)!0.5!(x20)$){6};   
\end{tikzpicture}
&
\begin{tikzpicture}[scale=1.9]  
\node at (0.0,0.0) [listsize2](x0){};
\node at (-0.15,0.35) [listsize2](x1){};
\node at (-0.5,0.5) [listsize2](x2){};
\node at (-0.85,0.35) [listsize2](x3){};
\node at (-1,-0) [listsize2](x4){};
\node at (-0.85,-0.35) [listsize2](x5){};
\node at (-0.5,-0.5) [listsize2](x6){};
\node at (-0.15,-0.35) [listsize3](x7){};
\node at (-1,0.5) [listsize3](x8){};
\node at (0.25,0.0) [listsize3](x9){};
\node at (-0.5,-0.7) [listsize3](x10){};
\coordinate (x11) at (-0,0.5);
\coordinate (x12) at (-0.5,0.7);
\coordinate (x13) at (-1.2,0);
\coordinate (x14) at (-1,-0.5);
\coordinate (x15) at (0.0,-0.5);
\coordinate (x16) at (-1.2,0.6);
\coordinate (x17) at (0.5,0);
\coordinate (x18) at (-0.5,-0.8);
\draw (x8) node[degree2]{} (x9) node[degree2]{} (x10) node[degree2]{};
\draw[realedges]  (x0)--(x1) (x0)--(x7) (x0)--(x9) (x1)--(x2) (x2)--(x3) (x3)--(x4) (x3)--(x8) (x4)--(x5) (x5)--(x6) (x6)--(x7) (x6)--(x10);
\draw[externaledges]  (x1)--(x11) (x2)--(x12) (x4)--(x13) (x5)--(x14) (x7)--(x15) (x8)--(x16) (x9)--(x17) (x10)--(x18);
\node at (-0.5,-0){8 };   
\end{tikzpicture}
&
\begin{tikzpicture}[scale=2.4]  
\node at (0.0,0.0) [listsize3](x0){};
\node at (0,0.3) [listsize4](x1){};
\node at (0.2,0.5) [listsize3](x2){};
\node at (0.5,0.5) [listsize2](x3){};
\node at (0.7,0.3) [listsize3](x4){};
\node at (0.7,0) [listsize2](x5){};
\node at (0.5,-0.2) [listsize2](x6){};
\node at (0.2,-0.2) [listsize2](x7){};
\node at (-0.2,-0.1) [listsize2](x8){};
\node at (-0.4,0) [listsize1](x9){};
\node at (-0.4,0.3) [listsize2](x10){};
\node at (-0.2,0.4) [listsize4](x11){};
\node at (0.5,0.75) [listsize3](x12){};
\node at (0.9,0) [listsize3](x13){};
\coordinate (x14) at (0.2,0.75);
\coordinate (x15) at (0.9,0.3);
\coordinate (x16) at (0.5,-0.4);
\coordinate (x17) at (0.2,-0.4);
\coordinate (x18) at (-0.2,-0.3);
\coordinate (x19) at (-0.6,0);
\coordinate (x20) at (-0.6,0.3);
\coordinate (x21) at (0.5,1);
\coordinate (x22) at (1.1,0);
\draw (x11) node[degree2]{} (x12) node[degree2]{} (x13) node[degree2]{};
\draw[realedges]  (x0)--(x1) (x0)--(x7) (x0)--(x8) (x1)--(x2) (x1)--(x11) (x2)--(x3) (x3)--(x4) (x3)--(x12) (x4)--(x5) (x5)--(x6) (x5)--(x13) (x6)--(x7) (x8)--(x9) (x9)--(x10) (x10)--(x11);
\draw[externaledges]  (x2)--(x14) (x4)--(x15) (x6)--(x16) (x7)--(x17) (x8)--(x18) (x9)--(x19) (x10)--(x20) (x12)--(x21) (x13)--(x22);
\node at (0.35,0.15){ 8 }; 
\node at (-.2,0.15){6};  
\end{tikzpicture}
&
\begin{tikzpicture}[scale=2.4]  
\node at (0.0,0.0) [listsize4](x0){};
\node at (0.0,0.3) [listsize4](x1){};
\node at (-0.2,0.5) [listsize3](x2){};
\node at (-0.5,0.5) [listsize3](x3){};
\node at (-0.7,0.3) [listsize2](x4){};
\node at (-0.7,0) [listsize2](x5){};
\node at (-0.5,-0.2) [listsize1](x6){};
\node at (-0.2,-0.2) [listsize2](x7){};
\node at (.2,-0.2) [listsize4](x8){};
\node at (0.4,0) [listsize2](x9){};
\node at (0.4,0.3) [listsize1](x10){};
\node at (0.2,0.5) [listsize2](x11){};
\node at (-.2,0.7) [listsize3](x12){};
\node at (-0.9,0.3) [listsize3](x13){};
\coordinate (x14) at (-0.5,0.7);
\coordinate (x15) at (-0.9,0);
\coordinate (x16) at (-0.5,-0.4);
\coordinate (x17) at (-0.2,-0.4);
\coordinate (x18) at (0.6,0);
\coordinate (x19) at (0.6,0.3);
\coordinate (x20) at (0.2,0.7);
\coordinate (x21) at (-0.2,0.95);
\coordinate (x22) at (-1.1,0.3);
\draw (x8) node[degree2]{} (x12) node[degree2]{} (x13) node[degree2]{};
\draw[realedges]  (x0)--(x1) (x0)--(x7) (x0)--(x8) (x1)--(x2) (x1)--(x11) (x2)--(x3) (x2)--(x12) (x3)--(x4) (x4)--(x5) (x4)--(x13) (x5)--(x6) (x6)--(x7) (x8)--(x9) (x9)--(x10) (x10)--(x11);
\draw[externaledges]  (x3)--(x14) (x5)--(x15) (x6)--(x16) (x7)--(x17) (x9)--(x18) (x10)--(x19) (x11)--(x20) (x12)--(x21) (x13)--(x22);
\node at (-0.35,0.15){ 8 }; 
\node at (0.2,.15){6};  
\end{tikzpicture}
\\
C_{8,6},  \hspace{3px} $\alon$ & C_{8,7},  \hspace{3px} $\greedy$ & C_{8,8},  \hspace{3px} $\alon$ & C_{8,9} \hspace{3px} $\alon$ \\
\end{array}
\]
\caption{Reducible configurations around an 8-face.}\label{fig-8x}
\end{figure}
\item Suppose that $\ell (f) = 8$; $\mu_0(f) = 2$,
and note the reducible configurations in Figure~\ref{fig-8x}.
By configuration {$C_{8,1}$}, $f$ cannot be incident to two 2-vertices.  

Suppose that $f$ is incident to one 2-vertex.  By configuration {$C_{8,2}$}, $f$ cannot have two nearby vertices.  If $f$ has one nearby 2-vertex, the distance between the incident 2-vertex and the nearby 2-vertex must be either 4 or 5. If the distance is 5, by {$C_{8,3}$}, the nearby 2-vertex is not needy.
If the distance is 4, then by {$C_{8,4}$} and {$C_{8,5}$} the nearby 2-vertex is not needy. Hence (R4) does not apply to $f$ and $f$ only loses at most charge 2 to the incident 2-vertex by (R1) or (R2). Therefore, the final charge of $f$ is nonnegative. 

Suppose that $f$ is not incident to any 2-vertex.  
By configuration {$C_{8,6}$}, if $f$ has four nearby 2-vertices, none of them can be needy.
If $f$ has three nearby 2-vertices, then they are configured in one of the two ways shown in Figure~\ref{8face3nearby}.  By configuration {$C_{8,7}$}, $f$ cannot have the configuration in Figure~\ref{8face3nearby1}.  If $f$ has the configuration in Figure~\ref{8face3nearby2}, then by {$C_{8,8}$} and {$C_{8,9}$}, neither $v_1$ nor $v_3$ is needy, thus $f$ loses charge at most 1 to $v_3$ by (R4).  If $f$ has fewer than three nearby 2-vertices, then it loses charge at most 2: 1 to each nearby 2-vertex by (R4).  Hence, if $\ell (f) = 8$, then $f$ has nonnegative final charge.

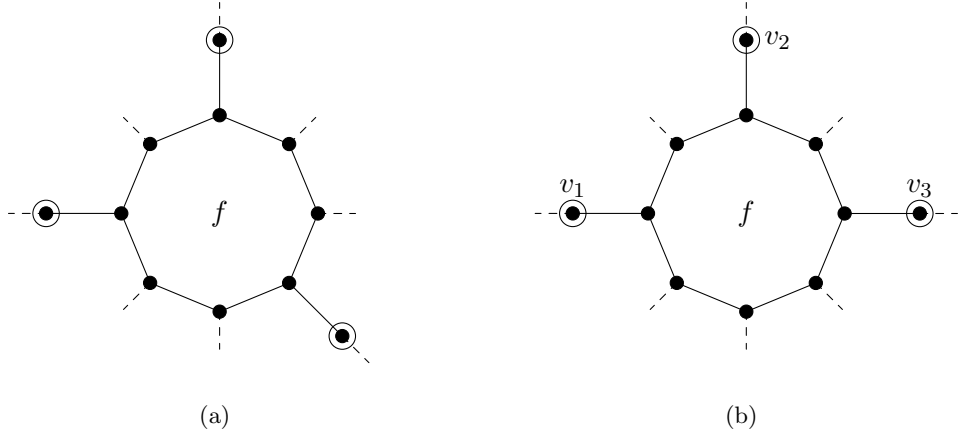
\begin{figure}
	\begin{center}
		\subfigure[\label{8face3nearby1}]{
		\begin{tikzpicture}[line cap=round,line join=round,>=triangle 45,x=1.0cm,y=1.0cm]
			\clip(0.693,0) rectangle (7.307,5.613);
			\draw (3.076,1.383)-- (4,1);
			\draw (4,1)-- (4.924,1.382);
			\draw (4.924,1.382)-- (5.307,2.306);
			\draw (5.307,2.306)-- (4.924,3.230);
			\draw (4.924,3.230)-- (4,3.613);
			\draw (4,3.613)-- (3.076,3.231);
			\draw (3.076,3.231)-- (2.693,2.307);
			\draw (2.693,2.307)-- (3.076,1.383);
			\draw (4.924,1.382)-- (5.631,0.675);
			\draw (4,3.613)-- (4,4.613);
			\draw (2.693,2.307)-- (1.693,2.307);
			\draw [dashed] (5.631,0.675)-- (5.985,0.321);
			\draw [dashed] (4,4.613)-- (4,5.113);
			\draw [dashed] (1.693,2.307)-- (1.193,2.307);
			\draw [dashed] (3.076,1.383)-- (2.722,1.029);
			\draw [dashed] (4,1)-- (4,0.5);
			\draw [dashed] (5.307,2.306)-- (5.807,2.306);
			\draw [dashed] (4.924,3.230)-- (5.278,3.584);
			\draw [dashed] (3.076,3.231)-- (2.722,3.584);
			\draw [fill=black] (4,1) circle (2.5pt);
			\draw [fill=black] (3.076,1.383) circle (2.5pt);
			\draw [fill=black] (4.924,1.382) circle (2.5pt);
			\draw [fill=black] (5.307,2.306) circle (2.5pt);
			\draw [fill=black] (4.924,3.230) circle (2.5pt);
			\draw [fill=black] (4,3.613) circle (2.5pt);
			\draw [fill=black] (3.076,3.231) circle (2.5pt);
			\draw [fill=black] (2.693,2.307) circle (2.5pt);
			\draw [fill=black] (1.693,2.307) circle (2.5pt);
			\draw (1.693,2.307) circle (5pt);
			\draw [fill=black] (4,4.613) circle (2.5pt);
			\draw (4,4.613) circle (5pt);
			\draw [fill=black] (5.631,0.675) circle (2.5pt);
			\draw (5.631,0.675) circle (5pt);
			\draw[color=black] (4,2.3) node {$f$};
		\end{tikzpicture} }
		\subfigure[\label{8face3nearby2}]{
		\begin{tikzpicture}[line cap=round,line join=round,>=triangle 45,x=1.0cm,y=1.0cm]
			\clip(0.693,0) rectangle (7.307,5.613);
			\draw (3.076,1.383)-- (4,1);
			\draw (4,1)-- (4.924,1.382);
			\draw (4.924,1.382)-- (5.307,2.306);
			\draw (5.307,2.306)-- (4.924,3.230);
			\draw (4.924,3.230)-- (4,3.613);
			\draw (4,3.613)-- (3.076,3.231);
			\draw (3.076,3.231)-- (2.693,2.307);
			\draw (2.693,2.307)-- (3.076,1.383);
			\draw (5.307,2.306)-- (6.307,2.306);
			\draw (4,3.613)-- (4,4.613);
			\draw (2.693,2.307)-- (1.693,2.307);
			\draw [dashed] (6.307,2.306)-- (6.807,2.306);
			\draw [dashed] (4,4.613)-- (4,5.113);
			\draw [dashed] (1.693,2.307)-- (1.193,2.307);
			\draw [dashed] (3.076,1.383)-- (2.722,1.029);
			\draw [dashed] (4,1)-- (4,0.5);
			\draw [dashed] (4.924,1.382)-- (5.278,1.029);
			\draw [dashed] (4.924,3.230)-- (5.278,3.584);
			\draw [dashed] (3.076,3.231)-- (2.722,3.584);
			\draw [fill=black] (4,1) circle (2.5pt);
			\draw [fill=black] (3.076,1.383) circle (2.5pt);
			\draw [fill=black] (4.924,1.382) circle (2.5pt);
			\draw [fill=black] (5.307,2.306) circle (2.5pt);
			\draw [fill=black] (4.924,3.230) circle (2.5pt);
			\draw [fill=black] (4,3.613) circle (2.5pt);
			\draw [fill=black] (3.076,3.231) circle (2.5pt);
			\draw [fill=black] (2.693,2.307) circle (2.5pt);
			\draw [fill=black] (1.693,2.307) circle (2.5pt);
			\draw (1.693,2.307) circle (5pt);
			\draw [fill=black] (4,4.613) circle (2.5pt);
			\draw (4,4.613) circle (5pt);
			\draw [fill=black] (6.307,2.306) circle (2.5pt);
			\draw (6.307,2.306) circle (5pt);
			\draw[color=black] (4,2.3) node {$f$};
			\draw[color=black] (1.693,2.64) node {$v_1$};
			\draw[color=black] (4.42,4.613) node {$v_2$};
			\draw[color=black] (6.307,2.64) node {$v_3$};
		\end{tikzpicture} }
		\caption{\label{8face3nearby} Two configurations of an 8-face $f$ with three nearby 2-vertices}
	\end{center}
\end{figure}

\begin{center}
	\begin{tabular}{|c|c|c|c|c|c|} \hline
\multicolumn{6}{|c|}{{\bf  $\ell(f) = 8$}} \\
\hline
		$|I(f)| / |N(f)|$ & 0 & 1 & 2 & 3 & 4 \\
		 \hline
		0 & EC & EC & EC & $C_{8,7},C_{8,8},C_{8,9}$&$C_{8,6}$ \\ \hline
		1 & EC & $C_{8,3},C_{8,4},C_{8,5}$  & $C_{8,2}$   & DR& DR  \\ \hline
		2 & $C_{8,1}$ &DR& DR& DR& DR \\ \hline
	\end{tabular}
\end{center}

\begin{figure}
\[
\begin{array}{cccc} 	
\begin{tikzpicture}[scale=2.5]  
\node at (0.0,.15) [listsize3](x0){};
\node at (-.2,0.35) [listsize2](x1){};
\node at (-0.4,0.35) [listsize2](x2){};
\node at (-0.6,0.2) [listsize3](x3){};
\node at (-0.6,-0.0) [listsize5](x4){};
\node at (-0.6,-0.2) [listsize3](x5){};
\node at (-0.4,-0.35) [listsize2](x6){};
\node at (-0.2,-0.35) [listsize1](x7){};
\node at (0,-0.15) [listsize2](x8){};
\node at (-0.8,0.2) [listsize2](x9){};
\node at (-.9,0) [listsize1](x10){};
\node at (-0.8,-0.2) [listsize2](x11){};
\coordinate (x12) at (-.2,0.55);
\coordinate (x13) at (-0.4,0.55);
\coordinate (x14) at (-0.4,-0.55);
\coordinate (x15) at (-0.2,-0.55);
\coordinate (x16) at (0.2,-0.15);
\coordinate (x17) at (-0.8,0.4);
\coordinate (x18) at (-1.1,0);
\coordinate (x19) at (-0.8,-0.4);
\draw (x0) node[degree2]{} (x4) node[degree2]{};
\draw[realedges]  (x0)--(x1) (x0)--(x8) (x1)--(x2) (x2)--(x3) (x3)--(x4) (x3)--(x9) (x4)--(x5) (x5)--(x6) (x5)--(x11) (x6)--(x7) (x7)--(x8) (x9)--(x10) (x10)--(x11);
\draw[externaledges]  (x1)--(x12) (x2)--(x13) (x6)--(x14) (x7)--(x15) (x8)--(x16) (x9)--(x17) (x10)--(x18) (x11)--(x19);
\node at (-0.3,-0.0){9  };  
\node at (-.75,0){6};  
\end{tikzpicture}
&
\begin{tikzpicture}[scale=2]  
\node at (0.0,0.0) [listsize3](x0){};
\node at (-0.1,0.3) [listsize2](x1){};
\node at (-0.4,0.4) [listsize2](x2){};
\node at (-0.7,0.3) [listsize2](x3){};
\node at (-0.8,0) [listsize3](x4){};
\node at (-0.75,-0.3) [listsize2](x5){};
\node at (-0.52,-0.45) [listsize2](x6){};
\node at (-0.27,-0.45) [listsize1](x7){};
\node at (-0.05,-0.3) [listsize2](x8){};
\node at (-0.85,0.5) [listsize3](x9){};
\node at (-0.9,-0.5) [listsize3](x10){};
\coordinate (x11) at (0.1,0.43);
\coordinate (x12) at (-0.4,0.65);
\coordinate (x13) at (-1,0);
\coordinate (x14) at (-0.52,-0.7);
\coordinate (x15) at (-0.27,-0.7);
\coordinate (x16) at (.1,-0.43);
\coordinate (x17) at (-1,.65);
\coordinate (x18) at (-1.05,-0.67);
\draw (x0) node[degree2]{} (x9) node[degree2]{} (x10) node[degree2]{};
\draw[realedges]  (x0)--(x1) (x0)--(x8) (x1)--(x2) (x2)--(x3) (x3)--(x4) (x3)--(x9) (x4)--(x5) (x5)--(x6) (x5)--(x10) (x6)--(x7) (x7)--(x8);
\draw[externaledges]  (x1)--(x11) (x2)--(x12) (x4)--(x13) (x6)--(x14) (x7)--(x15) (x8)--(x16) (x9)--(x17) (x10)--(x18);
\node at (-0.4,0.0){ 9};   
\end{tikzpicture}
&
\begin{tikzpicture}[scale=2]  
\node at (0.0,0.0) [listsize3](x0){};
\node at (-0.1,0.3) [listsize2](x1){};
\node at (-0.4,0.4) [listsize2](x2){};
\node at (-0.7,0.3) [listsize2](x3){};
\node at (-0.8,0) [listsize2](x4){};
\node at (-0.75,-0.3) [listsize2](x5){};
\node at (-0.52,-0.45) [listsize2](x6){};
\node at (-0.27,-0.45) [listsize2](x7){};
\node at (-0.05,-0.3) [listsize2](x8){};
\node at (-.85,.5) [listsize3](x9){};
\node at (-0.52,-0.7) [listsize3](x10){};
\coordinate (x11) at (0.1,0.43);
\coordinate (x12) at (-0.4,0.65);
\coordinate (x13) at (-1,0);
\coordinate (x14) at (-0.9,-0.5);
\coordinate (x15) at (-.27,-0.7);
\coordinate (x16) at (.1,-0.43);
\coordinate (x17) at (-1,0.65);
\coordinate (x18) at (-.52,-.9);
\draw (x0) node[degree2]{} (x9) node[degree2]{} (x10) node[degree2]{};
\draw[realedges]  (x0)--(x1) (x0)--(x8) (x1)--(x2) (x2)--(x3) (x3)--(x4) (x3)--(x9) (x4)--(x5) (x5)--(x6) (x6)--(x7) (x6)--(x10) (x7)--(x8);
\draw[externaledges]  (x1)--(x11) (x2)--(x12) (x4)--(x13) (x5)--(x14) (x7)--(x15) (x8)--(x16) (x9)--(x17) (x10)--(x18);
\node at (-0.4,0){ 9 };   
\end{tikzpicture}
&
\begin{tikzpicture}[scale=2]  
\node at (0.0,0.0) [listsize2](x0){};
\node at (-0.15,-0.24) [listsize2](x1){};
\node at (-0.15,-0.5) [listsize2](x2){};
\node at (0.1,-0.7) [listsize2](x3){};
\node at (0.4,-0.7) [listsize3](x4){};
\node at (0.65,-0.5) [listsize2](x5){};
\node at (0.65,-0.24) [listsize3](x6){};
\node at (0.5,0.0) [listsize2](x7){};
\node at (0.25,0.1) [listsize3](x8){};
\node at (0.0,-.9) [listsize3](x9){};
\node at (0.83,-0.63) [listsize3](x10){};
\node at (0.65,0.2) [listsize3](x11){};
\node at (-0.2,0.15) [listsize3](x12){};
\coordinate (x13) at (-0.4,-0.24);
\coordinate (x14) at (-0.3,-0.65);
\coordinate (x15) at (0.5,-0.9);
\coordinate (x16) at (.9,-0.24);
\coordinate (x17) at (0.25,0.35);
\coordinate (x18) at (-.10,-1.09);
\coordinate (x19) at (1.03,-0.74);
\coordinate (x20) at (0.77,0.4);
\coordinate (x21) at (-0.39,0.29);
\draw (x9) node[degree2]{} (x10) node[degree2]{} (x11) node[degree2]{} (x12) node[degree2]{};
\draw[realedges]  (x0)--(x1) (x0)--(x8) (x0)--(x12) (x1)--(x2) (x2)--(x3) (x3)--(x4) (x3)--(x9) (x4)--(x5) (x5)--(x6) (x5)--(x10) (x6)--(x7) (x7)--(x8) (x7)--(x11);
\draw[externaledges]  (x1)--(x13) (x2)--(x14) (x4)--(x15) (x6)--(x16) (x8)--(x17) (x9)--(x18) (x10)--(x19) (x11)--(x20) (x12)--(x21);
\node at (0.25,-0.3){ 9 };   
\end{tikzpicture}
\\
C_{9,1}, \hspace{3px} $\greedy$ & C_{9,2}, \hspace{3px} $\greedy$ & C_{9,3}, \hspace{3px} $\greedy$ & C_{9,4},  \hspace{3px} $\greedy$\\
\end{array}
\]
\caption{Reducible configurations around a 9-face.}\label{fig-9x}
\end{figure}
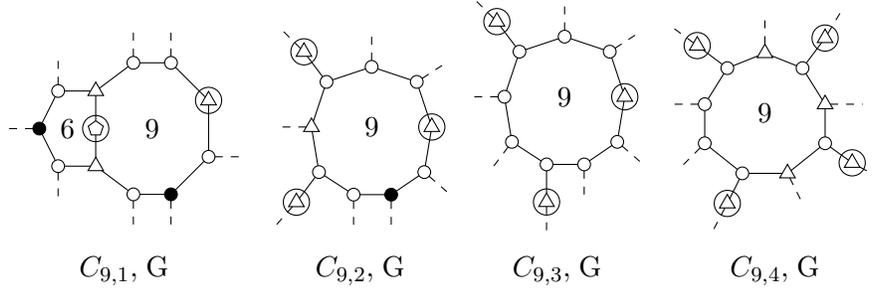
\item  Suppose that $\ell (f) = 9$; $\mu_0(f) = 3$,
and note the reducible configurations in Figure~\ref{fig-9x}.

  If $f$ is incident to two 2-vertices $v_1$ and $v_2$, then by configuration {$C_{9,1}$} neither $v_1$ nor $v_2$ can be incident to a 6-face, thus $f$ only loses charge 1 to each 2-vertex by (R2).  
  By Claim~\ref{claim1}, $f$ does not have any nearby vertices. Hence the final charge of $f$ is nonnegative.
  
  Suppose that $f$ is incident to one 2-vertex.  By configurations {$C_{9,2}$} and {$C_{9,3}$}, $f$ cannot have two nearby 2-vertices.  Since $f$ has at most one 2-vertex $v$, it loses charge at most 3; at most 2 to the incident 2-vertex by (R1) and at most 1 to $v$ by (R4).  Hence, if $f$ is incident to one 2-vertex, its final charge is nonnegative.  
  
 Suppose that $f$ is not incident to any 2-vertex.  By configuration {$C_{9,4}$}, $f$ cannot have four nearby 2-vertices.  If $f$ has at most three nearby 2-vertices, then it loses charge at most 3: at most 1 to each nearby 2-vertex by (R4).  Hence, if $\ell (f) = 9$, then $f$ has nonnegative final charge.
  \begin{center}
	\begin{tabular}{|c|c|c|c|c|c|} \hline
\multicolumn{6}{|c|}{{\bf  $\ell(f) = 9$}} \\
\hline
		$|I(f)| / |N(f)|$ & 0 & 1 & 2 & 3 & 4 \\
		 \hline
		0 & EC & EC & EC & EC & $C_{9,4}$ \\ \hline
		1 & EC & EC & $C_{9,2},C_{9,3}$   & DR& DR  \\ \hline
		2 & $C_{9,1}$ &DR & DR& DR&  DR \\ \hline
	\end{tabular}
\end{center}

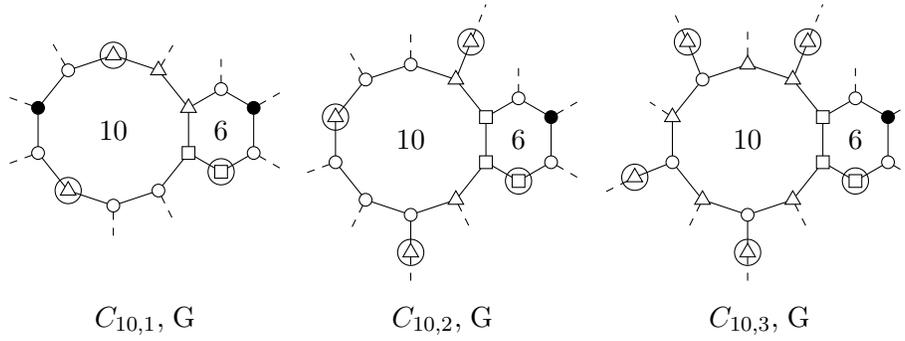
\begin{figure}
\[
\begin{array}{ccc}	
\begin{tikzpicture}[scale=2]  
\node at (0.0,0.0) [listsize3](x0){};
\node at (0.3,-0.1) [listsize3](x1){};
\node at (0.5,-0.35) [listsize3](x2){};
\node at (0.5,-0.65) [listsize4](x3){};
\node at (0.3,-0.9) [listsize2](x4){};
\node at (0,-1) [listsize2](x5){};
\node at (-0.3,-0.9) [listsize3](x6){};
\node at (-0.5,-0.65) [listsize2](x7){};
\node at (-0.5,-0.35) [listsize1](x8){};
\node at (-0.3,-0.1) [listsize2](x9){};
\node at ($(x3)+(-30:0.25)$) [listsize4](x10){};
\draw(x2)  ++(30:0.25) node[listsize2](y1){}  
 ++(-30:0.25) node[listsize1](y2){}  
 ++(270:0.3) node[listsize2](y3){}
 ;
\draw[externaledges]
(y1) -- ++(90:0.2)
(y2) -- ++(30:0.2)
(y3) -- ++(-30:0.2)
(x9) -- ++(120:0.2)
(x7) -- ++(200:0.2)
(x1) -- ++(60:0.2)
(x5) -- ++(270:0.2)
(x8) -- ++(160:0.2)
;
\coordinate (x15) at (0,0.25);
\coordinate (x16) at (0.65,-0.25);
\coordinate (x17) at (.4,-1.1);
\coordinate (x18) at (-.4,-1.1);
\coordinate (x21) at (0.5,0.4);
\coordinate (x22) at (-0.5,.37);
\coordinate (x23) at (-.95,-.85);
\coordinate (x24) at (0,-1.45);
\draw (x10) node[degree2]{} (x6) node[degree2]{} (x0) node[degree2]{};
\draw[realedges]  (x0)--(x1) (x0)--(x9) (x1)--(x2)  (x2)--(x3) (x3)--(x4) (x3)--(x10) (x4)--(x5) (x5)--(x6) (x6)--(x7) (x7)--(x8)  (x8)--(x9)  (x2)--(y1) (y1)--(y2) (y2)--(y3) (y3)--(x10);
\draw[externaledges]   (x4)--(x17) ;
\node at (0,-0.5){ 10 };   
\node at ($(y2)!0.5!(x3)$){6};   
\node at  (x24){};
\end{tikzpicture}
& 	
\begin{tikzpicture}[scale=2]  
\node at (0.0,0.0) [listsize2](x0){};
\node at (0.3,-0.1) [listsize3](x1){};
\node at (0.5,-0.35) [listsize4](x2){};
\node at (0.5,-0.65) [listsize4](x3){};
\node at (0.3,-0.9) [listsize3](x4){};
\node at (0,-1) [listsize2](x5){};
\node at (-0.3,-0.9) [listsize2](x6){};
\node at (-0.5,-0.65) [listsize2](x7){};
\node at (-0.5,-0.35) [listsize3](x8){};
\node at (-0.3,-0.1) [listsize2](x9){};
\node at ($(x3)+(-30:0.25)$) [listsize4](x10){};
\node at (0.4,0.15) [listsize3](x11){};
\node at (0,-1.25)  [listsize3](x14){};
\draw(x2)  ++(30:0.25) node[listsize2](y1){}  
 ++(-30:0.25) node[listsize1](y2){}  
 ++(270:0.3) node[listsize2](y3){}
 ;
\draw[externaledges]
(y1) -- ++(90:0.2)
(y2) -- ++(30:0.2)
(y3) -- ++(-30:0.2)
(x9) -- ++(120:0.2)
(x7) -- ++(200:0.2)
;
\coordinate (x15) at (0,0.25);
\coordinate (x16) at (0.65,-0.25);
\coordinate (x17) at (.4,-1.1);
\coordinate (x18) at (-.4,-1.1);
\coordinate (x21) at (0.5,0.4);
\coordinate (x22) at (-0.5,.37);
\coordinate (x23) at (-.95,-.85);
\coordinate (x24) at (0,-1.45);
\draw (x10) node[degree2]{} (x11) node[degree2]{} (x8) node[degree2]{} (x14) node[degree2]{};
\draw[realedges]  (x0)--(x1) (x0)--(x9) (x1)--(x2) (x1)--(x11) (x2)--(x3) (x3)--(x4) (x3)--(x10) (x4)--(x5) (x5)--(x6) (x5)--(x14) (x6)--(x7) (x7)--(x8)  (x8)--(x9)  (x2)--(y1) (y1)--(y2) (y2)--(y3) (y3)--(x10);
\draw[externaledges]  (x0)--(x15) (x4)--(x17) (x6)--(x18)  (x11)--(x21)  (x14)--(x24);
\node at (0,-0.5){ 10 };   
\node at ($(y2)!0.5!(x3)$){6};   
\end{tikzpicture}
&
\begin{tikzpicture}[scale=2]  
\node at (0.0,0.0) [listsize3](x0){};
\node at (0.3,-0.1) [listsize3](x1){};
\node at (0.5,-0.35) [listsize4](x2){};
\node at (0.5,-0.65) [listsize4](x3){};
\node at (0.3,-0.9) [listsize3](x4){};
\node at (0,-1) [listsize2](x5){};
\node at (-0.3,-0.9) [listsize3](x6){};
\node at (-0.5,-0.65) [listsize2](x7){};
\node at (-0.5,-0.35) [listsize3](x8){};
\node at (-0.3,-0.1) [listsize2](x9){};
\node at ($(x3)+(-30:0.25)$) [listsize4](x10){};
\node at (0.4,0.15) [listsize3](x11){};
\node at (-0.4,.15) [listsize3](x12){};
\node at (-0.75,-0.75) [listsize3](x13){};
\node at (0,-1.25)  [listsize3](x14){};
\draw(x2)  ++(30:0.25) node[listsize2](y1){}  
 ++(-30:0.25) node[listsize1](y2){}  
 ++(270:0.3) node[listsize2](y3){}
 ;
\draw[externaledges]
(y1) -- ++(90:0.2)
(y2) -- ++(30:0.2)
(y3) -- ++(-30:0.2)
;
\coordinate (x15) at (0,0.25);
\coordinate (x16) at (0.65,-0.25);
\coordinate (x17) at (.4,-1.1);
\coordinate (x18) at (-.4,-1.1);
\coordinate (x19) at (-0.65,-0.25);
\coordinate (x21) at (0.5,0.4);
\coordinate (x22) at (-0.5,.37);
\coordinate (x23) at (-.95,-.85);
\coordinate (x24) at (0,-1.45);
\draw (x10) node[degree2]{} (x11) node[degree2]{} (x12) node[degree2]{} (x13) node[degree2]{} (x14) node[degree2]{};
\draw[realedges]  (x0)--(x1) (x0)--(x9) (x1)--(x2) (x1)--(x11) (x2)--(x3) (x3)--(x4) (x3)--(x10) (x4)--(x5) (x5)--(x6) (x5)--(x14) (x6)--(x7) (x7)--(x8) (x7)--(x13) (x8)--(x9) (x9)--(x12) (x2)--(y1) (y1)--(y2) (y2)--(y3) (y3)--(x10);
\draw[externaledges]  (x0)--(x15) (x4)--(x17) (x6)--(x18) (x8)--(x19) (x11)--(x21) (x12)--(x22) (x13)--(x23) (x14)--(x24);
\node at (0,-0.5){ 10 };   
\node at ($(y2)!0.5!(x3)$){6};   
\end{tikzpicture}
\\
 C_{10,1},  \hspace{3px} $\greedy$   & C_{10,2},  \hspace{3px} $\greedy$  & C_{10,3},  \hspace{3px} $\greedy$  \\
\end{array}
\]
\caption{Reducible configurations around a 10-face.}\label{fig-10x}
\end{figure}

\item  Suppose that $\ell (f) = 10$; $\mu_0(f) = 4$, and note the reducible configurations in Figure~\ref{fig-10x}.

  If $f$ is incident to two 2-vertices, then by configuration {$C_{10,1}$}, $f$ has no nearby needy 2-vertices.
  Hence $f$ twice loses charge at most 2 by (R1) or (R2).  
  Hence the final charge of $f$ is nonnegative.
  
  Suppose that $f$ is incident to one 2-vertex.  By configuration {$C_{10,2}$}, $f$ can have at most two nearby needy 2-vertices.
  Hence $f$  loses at most charge 2 by (R1) or (R2) once and loses at most twice charge 1 by (R4). 
  Hence the final charge of $f$ is nonnegative.
  
   Suppose that $f$ is not incident to any 2-vertex.  
   By configuration {$C_{10,3}$}, if $f$ has five nearby 2-vertices, none of them can be needy.
   Hence $f$ loses at most four times charge 1 by (R4). 
   Hence, if $\ell (f) = 10$, then $f$ has nonnegative final charge.
  \begin{center}
	\begin{tabular}{|c|c|c|c|c|c|c|} \hline
\multicolumn{7}{|c|}{{\bf $\ell(f) = 10$}} \\
\hline
		$|I(f)|/ |N(f)|$ & 0 & 1 & 2 & 3 & 4&5 \\
		 \hline
		0 & EC & EC & EC & EC & EC& $C_{10,3}$ \\ \hline
		1 & EC & EC  & EC  & $C_{10,2}$ & DR& DR \\ \hline
		2 & EC &$C_{10,1}$ & DR&DR&  DR &DR \\ \hline
	\end{tabular}
\end{center}

\item Suppose that $\ell(f) \geq 11$. We define sets of edges that are close to 2-vertices.
For every 2-vertex $u$, define
\[
W_u = \{e \in E(G) :  \exists v \in N_G(u), v \in e   \}.
\]
As seen in Figure~\ref{fig3}, if $u \in I(f)$ then $\left|W_u \cap f\right| \geq 4$ since $u$ has at least two distinct neighbors incident with $f$ and these neighbors are incident to four distinct edges of $f$. Similarly if $v \in N(f)$, then $\left|W_v \cap f\right| \geq 2$.
By Claim~\ref{claim1}, $W_{u} \cap W_{v} = \emptyset$ for any two distinct 2-vertices $u$ and $v$.
Hence $\ell(f) \geq 4 |I(f)| + 2|N(f)|$.
Every vertex in $I(f)$ receives charge at most 2 by (R1) and each vertex in $N(f)$ receives charge at most 1 by (R4). 

If $f$ is not the outer face, we show that the final charge of $f$ is nonnegative as follows:
\begin{align*}
\mu_5(f) &\geq \mu_0(f) - 2|I(f)| - |N(f)|  = \ell(f) - 6 - 2|I(f)| - |N(f)| \\
              &\geq  \left(\left\lceil\frac{\ell(f)}{2}\right\rceil - 6\right)  + \left(\left\lfloor \frac{\ell(f)}{2} \right\rfloor - 2|I(f)| - |N(f)|\right) \geq 0.
\end{align*}
If $f$ is the outer face $F_o$, we need a slightly better estimate. 
Notice that $F_o-\mathcal{P}$ is a cycle of length $\ell(F_o)-2|\mathcal{P}|$. 
Let $B$ be the set of bad vertices in $G$. 
Recall (R5) applies only to bad vertices.
Notice  $\ell(F_o)-2|\mathcal{P}| \geq 4 |I(f)| + 2|N(f)|$.
Rules (R1), (R2), (R4), and (R5) may apply and the computation of the final charge is
\begin{align*}
\mu_5(F_o) &\geq \mu_0(F_o) - 2|I(F_o)| - |N(F_o)| - |B| = \ell(F_o) - 5 -|\mathcal{P}| - 2|I(F_o)| - |N(F_o)| - |B| \\
              &\geq  \left(\left\lceil\frac{\ell(F_o)}{2}\right\rceil - 5 - |B|\right)  + \left(\left\lfloor \frac{\ell(F_o)}{2} \right\rfloor -|\mathcal{P}| - 2|I(F_o)| - |N(F_o)|\right) \geq 0.
\end{align*}

Observe that $C_{2,2}^\star$  can appear only once in $G$, so $|B| \leq 1$.
Hence, if $\ell (f) \geq 11$, then $f$ has a nonnegative final charge.

\begin{figure}[h!]
\centering
\begin{tikzpicture}[scale=2]
\draw (0,0) ellipse (2cm and .5cm);
\node at (-1.78201304837674, 0.226995249869773)[listsize1](x0){};
\node at (-0.907980999479093, 0.445503262094184)[listsize1](x1){};
\node at (-1.41421356237310, 0.353553390593274)[listsize1](x2){};
\node at (-1.49, 0.627106781186548) [listsize2](x3){$v$};

\node at (1.41421356237310, 0.353553390593274) [listsize2,fill=white](x4){$u$};
\node at (0.907980999479094, 0.445503262094184) [listsize1](x5){};
\node at (0.312868930080462, 0.493844170297569) [listsize1](x6){};
\node at (1.78201304837674, 0.226995249869773) [listsize1](x7){};
\node at (1.97537668119028, 0.0782172325201154) [listsize1](x8){};
\node at (1.3, 0.353553390593274) (x9){};
\node at (-1.35, 0.45) (x10){};
\draw (x3) node[degree2]{} (x4) node[degree2]{};
\draw[realedges]  (x2)--(x3);	 
\node at (0,0) {$f$}; 
\draw[dashed,rotate=10] (x10) ellipse (.65cm and .3cm);
\draw[dashed,rotate=-10] (x9) ellipse (1.1cm and .25cm);
\draw[-,line width=1.8pt] (x0)--(x2) (x2)--(x1) (x4)--(x5) (x5)--(x6) (x4)--(x7) (x7)--(x8);

\end{tikzpicture}
\caption{Edges on face $f$ in $W_u$ and $W_v$.}
\label{fig3}
\end{figure}
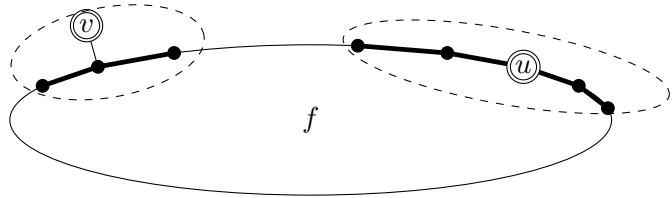

\end{itemize} 

We conclude that  $\sum_{v \in V(G)}\mu_0(v) + \sum_{f \in F(G)} \mu_0(f)  \geq 0$, which is a contradiction with \eqref{eqneg}.
This concludes the proof of Theorem~\ref{thm:ICN:realresult}.


\section{Conclusion and Future Work}\label{sec:future}

In this paper, we have shown through the method of discharging that subcubic planar graphs with girth at least 6 are injectively 5-choosable. This result improves several known bounds on the injective chromatic number and injective choosability in particular cases.
However, it leaves the most interesting conjecture about injective 5-coloring of planar graphs open.

\begin{conjecture}[Chen, Hahn, Raspaud and Wang~\cite{ChenRapsaud}]\label{conj:strong}
If a planar graph $G$ has $\Delta(G)=3$, then $\chi_i(G)\leq 5$.
\end{conjecture}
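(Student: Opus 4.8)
The plan is to attack Conjecture~\ref{conj:strong} by the same discharging framework developed above, but now with no girth hypothesis, so that triangles, $4$-faces and $5$-faces are all permitted. As before, I would pass to a minimum counterexample $G$: a plane graph with $\Delta(G)\le 3$ that admits no injective $5$-coloring, minimizing $|V(G)|$ and, subject to that, $|E(G)|$. Since $\chi_i(G)=\chi(G^{(2)})$ and every vertex of a subcubic graph has at most $\Delta(\Delta-1)=6$ neighbors in $G^{(2)}$, a greedy bound already gives $\chi_i(G)\le 7$; the whole difficulty is squeezing this down to $5$. Note that $G^{(2)}$ itself need not be planar, so one cannot invoke planar coloring theorems directly and must argue on the embedding of $G$. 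I would first establish the standard structural reductions along the lines of Claims~\ref{claimprecolored}--\ref{cl:face}: that $G$ is $2$-connected, has minimum degree $2$ (a $1$-vertex sees at most two vertices in $G^{(2)}$ and is trivially reducible), and that no two $2$-vertices lie too close together.

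The heart of the argument is, as here, a library of reducible configurations verified by a computer search that inspects only list sizes and attempts either a greedy extension or an Alon--Tarsi orientation of the relevant piece of $G^{(2)}$. For each candidate $H$ one deletes it, $5$-colors $G-H$ by minimality, deletes the used colors from the lists on $H$, and checks that the residual lists still admit an injective coloring; one must again enlarge the family to account for vertices of $H$ sharing an outside common neighbor, as in the passage from $\mathcal{B}$ to $\mathcal{R}$ to $\mathcal{A}$. Because the girth is now unrestricted, $H$ must range over configurations containing short cycles. I would then keep the initial charges $\mu_0(v)=2\deg(v)-6$ and $\mu_0(f)=\ell(f)-6$, whose total is $-12$ by Euler's formula, and design rules that send charge from long faces not only to incident and nearby $2$-vertices but, crucially, \emph{to short faces}, since a triangle now begins with charge $-3$ and a $4$-face with $-2$.

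The main obstacle is precisely these short faces. In the girth-$6$ regime every face starts with nonnegative charge and only $2$-vertices are in deficit, so the entire discharging burden is to feed $2$-vertices and a small per-length table of reducible configurations suffices. With triangles, $4$-faces and $5$-faces present, the deficit instead lives on the faces themselves, and clusters of mutually adjacent short faces can create large connected regions of negative charge with no long face nearby from which to borrow. Controlling such clusters requires either showing that dense agglomerations of short faces are themselves reducible or introducing a genuinely new discharging invariant; I expect the configurations arising from two or three pairwise-adjacent triangles, where a single vertex already sees six forbidden colors in $G^{(2)}$, to be the decisive and hardest case. This is exactly the gap that keeps Conjecture~\ref{conj:strong} open, and a complete proof would likely demand a substantially larger reducible family together with charge-routing ideas going beyond the five rules used above.
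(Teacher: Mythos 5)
You have not proved the statement, and neither does the paper: the statement is Conjecture~\ref{conj:strong}, which the paper explicitly leaves open --- Theorem~\ref{thm:ICN:mainresult} (the girth-$6$ case) is described only as ``a step towards'' it, and Section~\ref{sec:future} reiterates that the conjecture remains unresolved. So there is no proof in the paper to compare against, and your submission is, by its own admission, a research programme rather than a proof. Concretely, it supplies none of the ingredients that would constitute an argument: no library of reducible configurations covering $3$-, $4$-, and $5$-faces, no discharging rules routing charge to short faces (which, as you note, start with charges $-3$, $-2$, $-1$), no verification that the rules leave every vertex and face nonnegative, and no resolution of the case you yourself flag as decisive, where a vertex incident to adjacent triangles already has six neighbors in $G^{(2)}$ and greedy extension from lists of size $5$ fails outright. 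Your closing sentence concedes exactly this, so the gap is not hidden --- but it is the entire content of the conjecture.

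Beyond the admitted incompleteness, several steps you propose to ``first establish'' would not transfer from the paper's setting without new ideas. The separation of $2$-vertices (Claim~\ref{claim1}) is proved by a counting argument on residual list sizes that tacitly uses girth at least $6$ to control the second neighborhoods of the deleted path; with triangles and $4$-cycles present, vertices of the path $Z$ can share outside neighbors or be adjacent in $G^{(2)}$ in ways the count does not cover. Likewise, the paper's reducibility scheme hinges on arranging that every vertex of a configuration copy $H'$ has at most one neighbor in $G-H'$ (so that $(G-H')^{(2)}$ agrees with $G^{(2)}-H'$), and on the precolored-path machinery whose exceptional analysis ($C_{2,2}^\star$, uniqueness of the bad vertex via girth $6$) breaks down once short cycles exist. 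Finally, your claim that the minimum counterexample is $2$-connected is not established by anything in the paper (the paper only derives connectivity plus bridge restrictions relative to precolored vertices) and would itself require an argument. In short: an accurate sketch of the method and an accurate diagnosis of why it stalls, but no proof, and the statement remains open.
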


We believe it might be possible to answer the following question in the affirmative.
\begin{question}
Is there a planar graph $G$ with $\Delta(G)=4$, $g(G)\geq 6$, and $\chi_i^{\ell}(G)=6$?
\end{question}

We are not aware of counterexamples to the following problems, which are closely related  to our result.
\begin{problem}
If a planar graph $G$ has $\Delta(G)=4$ and $g(G)\geq 6$, then $\chi_i^{\ell}(G)\leq 6$.
\end{problem}
\begin{problem}
If a planar graph $G$ has $\Delta(G)=3$ and $g(G)\geq 5$, then $\chi_i^{\ell}(G)\leq 5$.
\end{problem}

\noindent 
Note that these conjectures on injective choosability are analogous to the conjecture on injective colorability of Chen, Hahn, Raspaud and Wang~\cite{ChenRapsaud}, without requiring a girth restriction.

\section*{Acknowledgements}
We gratefully acknowledge financial support for this research from the following grants and organizations: NSF-DMS Grants 1604458, 1604773, 1604697 and 1603823 (all authors), NSF 1450681 (B. Brimkov), NSF DMS-1600390 (B. Lidick\'{y}).\\
We would like to thank Derrick Stolee for letting us reuse some of his graph coloring code, Stephen Hartke for spotting a mistake in our reducibility code in the earlier version of the paper and to the anonymous referee for valuable comments.

\bibliographystyle{abbrv}
\bibliography{Coloring}

\end{document}